\newtheorem{theorem}{Theorem}[section]
\newtheorem{corollary}[theorem]{Corollary}
\newtheorem{definition}[theorem]{Definition}
\newtheorem{proposition}[theorem]{Proposition}
\newtheorem{remark}[theorem]{Remark}
\newenvironment{proof}[1][Proof]{\noindent\textbf{#1.} }{\ \rule{0.5em}{0.5em}}
\newcommand{\e}{\epsilon}
\newcommand{\f}{\mathfrak}
\newcommand{\dt}{\left.\frac{d}{dt}\right|_{t=0}}
\newcommand{\wnabla}{\widetilde{\nabla}}
\newcommand{\bb}{\mathbb}
\newcommand{\R}{\mathbb{R}}
\newcommand{\C}{\mathbb{C}}
\newcommand{\QK}{\mathcal{QK}}
\font\frd=eufm10 scaled\magstep3
\newcommand{\Cyclic}[2]{\genfrac{}{}{0pt}{0}{#1}{#2}}
\newcommand{\fdS}{\raisebox{-2.0ex}{\mbox{\frd{S}}}}
\newcommand{\SXYZ}{\raisebox{-0.6ex}{\mbox{\scriptsize{$\mathit{XYZ}$}}}}
\newcommand{\Lie}[1]{\textsl{#1}}
\newcommand{\lie}[1]{\mathfrak{#1}}
\newcommand{\Sp}{\Lie{Sp}}
\newcommand{\sP}{\lie{sp}}
\newcommand\T{\rule{0pt}{3.1ex}}
\newcommand\B{\rule[-1.7ex]{0pt}{0pt}}
\title{Homogeneous structures of linear type on $\e$-K\"ahler and $\e$-quaternion K\"ahler manifolds}
\begin{document}

\author{M. Castrill\'on L\'opez \\
\small{ICMAT (CSIC-UAM-UC3M-UCM)}\\
\small{Departamento de Geometr\'\i a y Topolog\'\i a} \\
\small{Facultad de Matem\'aticas, Universidad Complutense de Madrid}\\
\small{28040 Madrid, Spain} \and Ignacio Luj\'an  \\\small{Departamento de Geometr\'\i a y Topolog\'\i a} \\
\small{Facultad de Matem\'aticas, Universidad Complutense de Madrid}\\
\small{28040 Madrid, Spain}}
\date{}

\maketitle

\begin{abstract}We analyze degenerate homogeneous
structures of linear type in the pseudo-K\"ahler and para-K\"ahler
cases. The local form and the holonomy of pseudo-K\"ahler or
para-K\"ahler manifolds admitting such structure are obtained. In
addition the associated homogeneous models are studied exhibiting
their relation with the incompleteness of the metric. The same
questions are tackled in the pseudo-quaternion K\"ahler and
para-quaternion K\"ahler cases.

These results complete the study of homogeneous structures of
linear type in pseudo-K\"ahler, para-K\"ahler, pseudo-quaternion
K\"ahler and para-quaternion K\"ahler cases
\end{abstract}

\renewcommand{\thefootnote}{\fnsymbol{footnote}}
\footnotetext{\emph{MSC2010:} Primary 53C30, Secondary 53C50,
53C55, 53C80.}
\renewcommand{\thefootnote}{\arabic{footnote}}
\renewcommand{\thefootnote}{\fnsymbol{footnote}}
\footnotetext{\emph{Key words and phrases:} homogeneous plane
waves, para-K\"ahler, pseudo-\\ K\"ahler, pseudo-quaternion
K\"ahler, para-quaternion K\"ahler, reductive homogeneous
pseudo-Riemannian spaces.}
\renewcommand{\thefootnote}{\arabic{footnote}}


\section{Introduction}

Ambrose and Singer \cite{AS} generalized Cartan's Theorem on
symmetric spaces characterizing connected, simply-connected and
complete homogeneous Riemannian spaces in terms of a
$(1,2)$-tensor field $S$ called \textit{homogeneous structure
tensor} (or simply \textit{homogeneous structure}) satisfying a
system of geometric PDE's, nowadays called Ambrose-Singer
equations. In \cite{Kir} this result is extended to homogeneous
Riemannian manifolds presenting an additional geometric structure
(such as K\"ahler, quaternion K\"ahler, $G_2$, etc.), and in
\cite{GO} the theory is adapted to metrics with signature.
Homogeneous structures have proved to be one of the most
successful tools in the study of homogeneous spaces, probably due
to the combination of their algebraic and geometric nature. The
first classification of homogeneous structures was provided in
\cite{TV} in the purely Riemannian case, and later in \cite{Fin}
the classification of homogeneous structures for all the possible
holonomy groups in Berger's list is given using a Representation
Theoretical approach. These techniques have been also used  for
metrics with signature (see for instance \cite{BGO}). In many
cases (such as K\"ahler, hyper-K\"ahler, quaternion K\"ahler,
$G_2$ or $\mathrm{Spin}(7)$, as well as in the pseudo-Riemannian
analogs) these classifications contain a class such that the
corresponding pointwise tensor submodule has dimension growing
linearly with the dimension of the manifold. For that reason
homogeneous structures belonging to these classes are called of
\textit{linear type}. The corresponding tensor fields are defined
by a set of vector fields satisfying a system of PDE's equivalent
to Ambrose-Singer equations.

For definite metrics, in the purely Riemannian case so as in the
case of K\"ahler and quaternion K\"ahler manifolds, homogeneous
structures of linear type characterize spaces of negative constant
sectional (resp. holomorphic sectional, quaternionic sectional)
curvature (see \cite{CGS0}, \cite{GMM} and \cite{TV}). When
metrics with signature are studied, the causal character of the
vector fields defining the homogeneous structure tensor needs to
be taken into account. In the purely pseudo-Riemannian case,
non-degenerate structures of linear type (i.e. given by a non null
vector field) characterize spaces of constant sectional curvature
\cite{GO}, while degenerate homogeneous structures of linear type
(i.e. given by a null vector field) characterize singular
scale-invariant plane waves \cite{Mon}. Furthermore, in \cite{Mes}
it is shown that homogeneous structures in the composed class
$\mathcal{S}_1+\mathcal{S}_3$ are related to a larger class of
singular homogeneous plane waves. In \cite{CL} the authors
generalize this result to the pseudo-K\"ahler setting in the
strongly degenerate case, i.e. homogeneous pseudo-K\"ahler
structures of linear type characterized by a null vector field
$\xi$ and vanishing vector field $\zeta$, resulting that the
underlying geometry presents significant analogies with the
geometry of a singular homogenous plane wave. The same problem is
analyzed in the case of pseudo-hyper-K\"ahler and
pseudo-quaternion K\"ahler geometry, finding that a metric
admitting such a structure must be flat.

In the present paper we study degenerate homogeneous structures of
linear type in the pseudo-K\"ahler and the para-K\"ahler settings,
that is, homogeneous pseudo-K\"ahler and para-K\"ahler structures
of linear type defined by a null vector field $\xi$ and an
arbitrary vector field $\zeta$ (see \cite{BGO}). Note that this
includes the strongly degenerate case, so that the results
obtained in this paper generalizes those in \cite{CL}. With these
results, together with those in \cite{LS}, we give a complete
description of the geometry of homogeneous structures of linear
type. Two essentially cases arise in pseudo-K\"ahler and
para-K\"ahler manifolds. On one hand, non-degenerate structures
locally characterize constant curvature spaces. On the other, the
degenerate case (structures studied in \S 3 and \S 4) provides
geometries with an interesting parallelism with homogeneous plane
waves. This is analyzed at the end of the paper. Finally the
pseudo-quaternion and para-quaternion K\"ahler framework does not
provide any geometry other than spaces of constant curvature, a
fact that indicates that the quaternionic realm seems to be too
rigid to contain generalizations of plane waves.

The paper is organized as follows. In Section 2, the general
framework and the notation is settled. Throughout the manuscript,
the notions of pseudo-K\"ahler and para-K\"ahler geometry will be
unified and treated together via the definition of $\e$-K\"ahler
geometry, $\e =\pm 1$. In Section 3, we obtain the curvature and
holonomy of an $\e$-K\"ahler manifold admitting a degenerate
homogeneous $\e$-K\"ahler structure of linear type. In addition we
prove that the vector field $\zeta$ must be a multiple of $\xi$ by
a factor $0$ or $-\e/2$. In Section 4 the local form of a metric
admitting these structures is obtained. The corresponding local
model is studied, focusing in the singular nature of the metric.
In Section 5 the homogeneous model associated to a degenerate
homogeneous $\e$-K\"ahler structure of linear type is computed,
showing that it is (geodesically) incomplete. In section 6 the
same problem in the pseudo-quaternion K\"ahler and para-quaternion
K\"ahler settings is tackled, resulting that the corresponding
metrics must be flat. Finally, section 7 gives a complete view of
the geometry of manifolds endowed with a homogeneous structure of
linear type.

\section{Preliminaries}

We shall combine the treatment of complex geometry and
para-complex geometry by defining $\e=\pm 1$, so that hereafter
$\e$ should be substituted by $-1$ for complex geometry and by $1$
for para-complex geometry (for a survey on para-complex geometry
see for example \cite{CFG}).

\begin{definition}
Let $(M,g)$ be a pseudo-Riemannian manifold of dimension $2n$,

\begin{enumerate}

\item An almost $\epsilon$-Hermitian structure on $(M,g)$ is a
smooth section $J$ of $\f{so}(TM)$ such that $J^2=\epsilon$.

\item $(M,g)$ is called $\epsilon$-K\"ahler if it admits a
parallel almost $\epsilon$-Hermitian structure $J$ with respect to
the Levi-Civita connection.
\end{enumerate}
\end{definition}

The first previous definition implies that the signature of $g$ is
$(2r,2s)$, $r+s=n$, for $\e=-1$, and $(n,n)$ for $\e=1$. The
second definition is equivalent to the holonomy being contained in
$U(r,s)$ for $\e=-1$, and $GL(n,\R)$ for $\e=1$.

Hereafter $(M,g,J)$ is supposed to be a connected $\e$-K\"ahler
manifold of dimension $\mathrm{dim}M\geq 4$.

\begin{definition}
An $\e$-K\"ahler manifold $(M,g,J)$ is called a homogeneous
$\e$-K\"ahler manifold if there is a connected Lie group $G$ of
isometries acting transitively on $M$ and preserving $J$.
$(M,g,J)$ is called a reductive homogeneous $\e$-K\"ahler manifold
if the Lie algebra $\f{g}$ of $G$ can be decomposed as
$\f{g}=\f{h}\oplus\f{m}$ with
$$[\f{h},\f{h}]\subset \f{h},\qquad[\f{h},\f{m}]\subset \f{m}.$$
\end{definition}

Using Kiri\v{c}enko's Theorem  \cite{Kir} (see also \cite{GO}) we
have

\begin{theorem}
Let $(M,g,J)$ be a connected, simply connected and complete
$\e$-K\"ahler manifold. Then the following are equivalent:
\begin{enumerate}
\item $(M,g,J)$ is a reductive homogeneous $\e$-K\"ahler manifold.
\item $(M,g,J)$ admits a linear connection $\wnabla$ such that
\begin{equation}\label{AS+J}
\wnabla g=0, \hspace{1em} \wnabla R=0,\hspace{1em} \wnabla
S=0,\hspace{1em} \wnabla J=0,
\end{equation}
where $S=\nabla-\wnabla$, $\nabla$ is the Levi-Civita connection
of $g$, and $R$ is the curvature tensor of $g$.
\end{enumerate}
\end{theorem}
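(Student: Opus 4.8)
The plan is to follow the classical Ambrose--Singer scheme in the reductive pseudo-Riemannian form of \cite{GO}, combined with Kiri\v{c}enko's extension to manifolds carrying an additional geometric structure \cite{Kir}, and simply to carry the extra tensor $J$ along at each step. The only equation in (\ref{AS+J}) genuinely tied to $J$ is $\wnabla J = 0$, and it will turn out to encode nothing more than isotropy-invariance of $J$.

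For $(1)\Rightarrow(2)$: write $M=G/H$ with reductive decomposition $\f g=\f h\oplus\f m$ and let $\wnabla$ be the associated canonical (Ambrose--Singer) connection. It is $G$-invariant and has parallel torsion and curvature. Since every $G$-invariant tensor field on a reductive homogeneous space is parallel for the canonical connection, the $G$-invariant tensors $g$, $J$ and $R$ (the last being $G$-invariant because $g$ is) satisfy $\wnabla g=\wnabla J=\wnabla R=0$; and $S=\nabla-\wnabla$, being the difference of two $G$-invariant linear connections, is a $G$-invariant $(1,2)$-tensor field, whence $\wnabla S=0$. This is (\ref{AS+J}).

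For $(2)\Rightarrow(1)$, the substantive direction: starting from $\wnabla$ with $\wnabla g=\wnabla R=\wnabla S=0$, I would invoke the pseudo-Riemannian Ambrose--Singer theorem \cite{GO} to produce a reductive homogeneous structure on $(M,g)$. Concretely, fix $p\in M$, let $\widetilde{\f h}\subset\f{so}(T_pM)$ be the holonomy algebra of $\wnabla$ (contained in $\f{so}(T_pM)$ by $\wnabla g=0$), put $\f g=\widetilde{\f h}\oplus T_pM$ with the bracket assembled from $R_p$, $S_p$ and the action of $\widetilde{\f h}$ on $T_pM$ (the Jacobi identity being guaranteed by $\wnabla R=\wnabla S=0$), and integrate this algebra of local Killing fields to a transitive action of a connected Lie group $G$; this is where completeness and simple connectedness of $M$ are used. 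To upgrade this to a homogeneous $\e$-K\"ahler structure it remains to check that $G$ preserves $J$: since $\wnabla J=0$, the endomorphism $J_p$ is invariant under $\wnabla$-parallel transport, hence commutes with $\widetilde{\f h}$, so $\widetilde{\f h}\subset\f u(r,s)$ for $\e=-1$ (resp. $\widetilde{\f h}\subset\f{gl}(n,\R)$ for $\e=1$) and $J$ is isotropy-invariant; spreading $J_p$ around by the $G$-action recovers the parallel field $J$, so $G$ acts by automorphisms of $(g,J)$ and $(M,g,J)$ is a reductive homogeneous $\e$-K\"ahler manifold.

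I expect the main obstacle to be the one already present in the purely pseudo-Riemannian case: the absence of a Hopf--Rinow theorem means that the locally defined Killing fields coming from the Ambrose--Singer data need not be complete, so passing from the infinitesimal model to an honest transitive group action can fail without the completeness hypothesis — this is exactly the point at which that hypothesis is indispensable. By contrast everything involving $J$ is purely formal once one observes that $\wnabla J=0$ forces $\widetilde{\f h}$ into the appropriate unitary or real-linear subalgebra.
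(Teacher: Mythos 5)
Your proposal is correct and follows essentially the same route as the paper, which establishes this theorem simply by invoking Kiri\v{c}enko's theorem \cite{Kir} together with the pseudo-Riemannian Ambrose--Singer theorem of \cite{GO}; you have merely reconstructed that standard argument, carrying $J$ along via $\wnabla J=0$ (holonomy of $\wnabla$ preserving $J_p$, hence isotropy-invariance). No genuine gap beyond what is already delegated to the cited references, where the completeness hypothesis is indeed used exactly at the integration step you identify.
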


\begin{definition}
A tensor field $S$ of type $(1,2)$ on an $\e$-K\"ahler manifold
$(M,g,J)$ satisfying (\ref{AS+J}) is called a homogeneous
$\e$-K\"ahler
 structure.
\end{definition}

The classification with respect to the action of the holonomy
group of homogeneous $\e$-K\"ahler structures is carried out in
\cite{BGO} and \cite{GO}, resulting four primitive classes
$\mathcal{K}^{-1}_1,\mathcal{K}^{-1}_2,\mathcal{K}^{-1}_3,\mathcal{K}^{-1}_4$
for $\e=-1$, and eight primitive classes
$\mathcal{K}^{1}_1,\ldots,\mathcal{K}^{1}_8$ for $\e=1$. Among
them, for $\mathcal{K}^{-1}_2$, $\mathcal{K}^{-1}_4$,
$\mathcal{K}^{1}_2$, $\mathcal{K}^{1}_4$, $\mathcal{K}^{1}_6$, and
$\mathcal{K}^{1}_8$ the corresponding pointwise modules have
dimension growing linearly with the dimension of $M$. For this
reason we define

\begin{definition}
A homogeneous $\e$-K\"ahler structure is called of linear type if
it belongs to
\begin{enumerate}
\item $\mathcal{K}^{1}_2\oplus \mathcal{K}^{1}_4$ for $\e=-1$,
\item
$\mathcal{K}^{1}_2\oplus\mathcal{K}^{1}_4\oplus\mathcal{K}^{1}_6\oplus\mathcal{K}^{1}_8$
for $\e=1$.
\end{enumerate}
\end{definition}

The following characterization can be obtained from \cite{BGO} and
\cite{GO}.

\begin{proposition}
A homogeneous $\e$-K\"ahler structure $S$ is of linear type if and
only if
\begin{eqnarray}\label{e-Kahler structure}
S_{X}Y &= & g(X,Y)\xi-g(\xi,Y)X+\e g(X,JY)J\xi-\e
g(\xi,JY)JX\nonumber\\ & - & 2g(\zeta,JX)JY,
\end{eqnarray}
for some vector fields $\xi$ and $\zeta$.
\end{proposition}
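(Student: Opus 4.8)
The plan is to reduce the statement to linear algebra at a single point and then apply Schur's lemma together with a dimension count. Fix $p\in M$, set $V=T_pM$, and keep the letters $g$ and $J$ for the induced objects on $V$. Since $S=\nabla-\wnabla$ and $\wnabla g=0=\nabla g$, $\wnabla J=0=\nabla J$, the endomorphisms $S_X$ are $g$-skew and commute with $J$; hence at $p$ the homogeneous $\e$-K\"ahler structure $S$ lies in $\mathcal{K}(V):=\{S\in V^*\otimes\f{so}(V): S_X\circ J=J\circ S_X\ \text{for all}\ X\}$, which, identifying $\f{so}(V)$ with $\Lambda^2V^*$ via $g$, is $V^*\otimes\f{g}_0$ with $\f{g}_0$ the centralizer of $J$ in $\f{so}(V)$, i.e.\ $\f{u}(r,s)$ for $\e=-1$ and $\f{gl}(n,\R)$ for $\e=1$. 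By the classification recalled before the statement, $\mathcal{K}(V)$ decomposes as a direct sum of irreducible (``primitive'') $\f{g}_0$-submodules; the linear-type classes are exactly the ones whose pointwise module grows linearly in $\dim M$, that is, those isomorphic to the standard module $V$ (which for $\e=1$ is reducible, $V=V^+\oplus V^-$). Thus $\mathcal{K}_{\mathrm{lin}}(V)$ is the isotypic component of the standard module(s), and $\dim_\R\mathcal{K}_{\mathrm{lin}}(V)=2\dim M$.

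Next I would introduce the two model maps. For $u\in V$ put $\sigma^1(u)_XY=g(X,Y)u-g(u,Y)X+\e g(X,JY)Ju-\e g(u,JY)JX$ and $\sigma^2(u)_XY=-2g(u,JX)JY$, so that the right-hand side of (\ref{e-Kahler structure}) is exactly $\sigma^1(\xi)+\sigma^2(\zeta)$. A short computation using only $J^2=\e$ and the $g$-skewness of $J$ verifies that $\sigma^1(u)$ and $\sigma^2(u)$ are skew-symmetric in the last two arguments and commute with $J$, hence lie in $\mathcal{K}(V)$; in fact $\sigma^2(u)=2(Ju)^\flat\otimes J\in V^*\otimes\R J$, and $\R J$ is the centre of $\f{g}_0$. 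Both $\sigma^1$ and $\sigma^2$ are $\f{g}_0$-equivariant maps $V\to\mathcal{K}(V)$, so by Schur's lemma their images can have no component in any primitive submodule not isomorphic to a standard module; thus $\mathrm{Im}\,\sigma^1+\mathrm{Im}\,\sigma^2\subseteq\mathcal{K}_{\mathrm{lin}}(V)$. For $\e=1$ one notes in addition that restricting $\sigma^i$ to $V^+$ and to $V^-$ produces the four primitive linear-type summands $\mathcal{K}_2,\mathcal{K}_4,\mathcal{K}_6,\mathcal{K}_8$.

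To finish I would prove that $\sigma^1\oplus\sigma^2:V\oplus V\to\mathcal{K}(V)$ is injective. Evaluating on $S=\sigma^1(\xi)+\sigma^2(\zeta)$ the ordinary contraction $X\mapsto\mathrm{tr}(Y\mapsto S_YX)$ and the $J$-twisted contraction $X\mapsto\mathrm{tr}(J\circ S_X)$, one gets two fixed linear combinations of $\xi^\flat$ and $\zeta^\flat$; a direct computation shows the coefficient matrix has determinant a nonzero multiple of $n^2-1$, which is nonzero since $\dim M=2n\geq 4$. Hence $\xi$ and $\zeta$ are uniquely determined by $S$ and depend smoothly on it, so in the global statement they assemble into smooth vector fields. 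Therefore $\mathrm{Im}\,\sigma^1\oplus\mathrm{Im}\,\sigma^2$ has dimension $2\dim M=\dim_\R\mathcal{K}_{\mathrm{lin}}(V)$, the inclusion of the previous paragraph is an equality, and a homogeneous $\e$-K\"ahler structure is of linear type if and only if pointwise $S=\sigma^1(\xi)+\sigma^2(\zeta)$, which is precisely (\ref{e-Kahler structure}).

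I expect the one genuinely delicate point to be the injectivity/dimension step: one must read off from \cite{BGO} and \cite{GO} that the standard module occurs in $\mathcal{K}(V)$ with total multiplicity $2$ (equivalently $\dim_\R\mathcal{K}_{\mathrm{lin}}(V)=2\dim M$), and must carry out the $2\times2$ contraction computation accurately enough to see exactly where the hypothesis $\dim M\geq 4$ enters. By contrast, checking $\sigma^i(u)\in\mathcal{K}(V)$, the Schur-type inclusion $\mathrm{Im}\,\sigma^1+\mathrm{Im}\,\sigma^2\subseteq\mathcal{K}_{\mathrm{lin}}(V)$, and the smoothness of $\xi,\zeta$ are routine.
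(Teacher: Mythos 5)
Your argument is correct, and it is worth noting that it is not the argument the paper gives: the paper proves this proposition by pure citation, reading formula \eqref{e-Kahler structure} off the explicit descriptions of the classes $\mathcal{K}_2,\mathcal{K}_4$ (resp.\ $\mathcal{K}_2,\mathcal{K}_4,\mathcal{K}_6,\mathcal{K}_8$) in \cite{BGO} and \cite{GO}, where each primitive class of linear type is written out concretely in terms of a $1$-form and the general element of their sum is exactly \eqref{e-Kahler structure}. You instead recover the formula abstractly: the pointwise space is $V^*\otimes\f{g}_0$ with $\f{g}_0$ the centralizer of $J$, the maps $\sigma^1,\sigma^2$ are $\f{g}_0$-equivariant, Schur's lemma confines their images to the isotypic component of the standard module(s), and injectivity plus the multiplicity count forces equality with the linear-type summand. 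Your contraction computation is right: with $m=\dim M$ the traces $\mathrm{tr}(Y\mapsto S_YX)$ and $\mathrm{tr}(J\circ S_X)$ give $-m\,g(\xi,X)-2\e\,g(\zeta,X)$ and $-4\,g(\xi,JX)-2\e m\,g(\zeta,JX)$, whose coefficient matrix has determinant $2\e(m^2-4)=8\e(n^2-1)$, so the hypothesis $\dim M\geq 4$ enters exactly where you say, and it also yields the uniqueness and smoothness of $\xi,\zeta$, which the paper never makes explicit. The trade-off is that you still import from \cite{BGO,GO} the structural fact that the standard module occurs with total multiplicity two (equivalently $\dim\mathcal{K}_{\mathrm{lin}}=2\dim M$ and no other primitive class is isomorphic to $V$, resp.\ $V^\pm$); since the paper relies on the same references for the whole statement, this is a legitimate input, but it should be stated as such rather than folded into the phrase ``by the classification''. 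Your parenthetical claim that restricting $\sigma^1,\sigma^2$ to $V^\pm$ produces the four para-K\"ahler classes is not needed for the proof (the images could a priori sit diagonally inside the isotypic component) and can be dropped.
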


Since we are dealing with metrics with signature we further
distinguish the following cases.

\begin{definition}
A homogeneous $\e$-K\"ahler structure of linear type $S$ is called
(see \cite{BGO})
\begin{enumerate}
\item[(i)] non-degenerate if $g(\xi,\xi)\neq 0$,

\item[(ii)] degenerate if $g(\xi,\xi)=0$,

\item[(iii)] strongly degenerate if $g(\xi,\xi)=0$ and $\zeta=0$.
\end{enumerate}
\end{definition}
Case (i) was studied in \cite{LS} and case (iii) was studied in
\cite{CL}. In this paper we concentrate in case (ii).

\section{Degenerate homogeneous $\e$-K\"ahler structures of linear type}

It is a straightforward computation to prove (see \cite{BGO})

\begin{proposition}\label{proposition 2}
A tensor field $S$ on $(M,g,J) $defined by formula (\ref{e-Kahler
structure}) is a homogeneous $\e$-K\"ahler structure if and only
if
$$\wnabla\xi=0,\qquad \wnabla\zeta=0, \qquad \wnabla R=0.$$
where $\wnabla=\nabla-S$.
\end{proposition}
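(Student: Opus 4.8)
The plan is to verify directly that the four Ambrose--Singer equations \eqref{AS+J} for the tensor $S$ given by \eqref{e-Kahler structure} are equivalent to the stated system $\wnabla\xi=0$, $\wnabla\zeta=0$, $\wnabla R=0$. Since $\wnabla=\nabla-S$, the two conditions $\wnabla g=0$ and $\wnabla J=0$ should come for free from the algebraic shape of $S$, while the remaining content of \eqref{AS+J}, namely $\wnabla R=0$ together with $\wnabla S=0$, has to be repackaged in terms of $\xi$ and $\zeta$. First I would check that $\wnabla g=0$: this amounts to showing $g(S_XY,Z)+g(Y,S_XZ)=0$, i.e.\ that $Y\mapsto S_XY$ lies in $\f{so}(TM)$. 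Inspecting \eqref{e-Kahler structure} term by term, each of the five summands is manifestly skew in $Y,Z$ after pairing with $g$ (the first two give $g(X,Y)g(\xi,Z)-g(\xi,Y)g(X,Z)$, visibly antisymmetric; the next two the same with $JY,JZ$ using $g(JU,V)=-g(U,JV)$; the last gives $-2g(\zeta,JX)g(JY,Z)$, antisymmetric since $J\in\f{so}(TM)$). Next, $\wnabla J=0$ is equivalent to $S_X(JY)=J(S_XY)$; one checks this using $J^2=\e$, $J\in\f{so}(TM)$, and the already-known parallelism $\nabla J=0$ — again a termwise verification, the key identities being $g(\xi,J^2Y)=\e g(\xi,Y)$ and so on.

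With $\wnabla g=0$ and $\wnabla J=0$ in hand, the remaining equivalence is
\[
\wnabla R=0\ \text{and}\ \wnabla S=0
\quad\Longleftrightarrow\quad
\wnabla\xi=0,\ \wnabla\zeta=0,\ \wnabla R=0.
\]
So the real task is to show that, \emph{granting} $\wnabla g=\wnabla J=0$, the single condition $\wnabla S=0$ is equivalent to $\wnabla\xi=0$ and $\wnabla\zeta=0$. One direction is easy: if $\wnabla\xi=0$ and $\wnabla\zeta=0$, then since $S$ in \eqref{e-Kahler structure} is built algebraically out of $g$, $J$, $\xi$, $\zeta$ — all of which are $\wnabla$-parallel — the Leibniz rule gives $\wnabla S=0$ immediately. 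For the converse I would differentiate \eqref{e-Kahler structure} with $\wnabla$ and use $\wnabla g=\wnabla J=0$ to obtain
\[
(\wnabla_Z S)_XY = g(X,Y)\,\wnabla_Z\xi - g(\wnabla_Z\xi,Y)X + \e g(X,JY)J\wnabla_Z\xi - \e g(\wnabla_Z\xi,JY)JX - 2g(\wnabla_Z\zeta,JX)JY.
\]
Setting this to zero, I would extract $\wnabla_Z\xi=0$ and $\wnabla_Z\zeta=0$ by suitable choices of $X$ and $Y$: e.g.\ pairing with a vector $W$ and choosing $X,Y$ so that only the $\wnabla_Z\xi$ terms survive recovers $\wnabla_Z\xi=0$ (one must check that the resulting linear map $W\mapsto(\text{coefficient of }\wnabla_Z\xi)$ is injective, which uses $\dim M\geq 4$), and then the leftover term $-2g(\wnabla_Z\zeta,JX)JY=0$ forces $\wnabla_Z\zeta=0$.

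The main obstacle — really the only point requiring care — is this last extraction step: one has to be sure that the system of coefficients multiplying $\wnabla_Z\xi$ in $(\wnabla_ZS)_XY=0$ is nondegenerate, so that vanishing of the whole tensor genuinely forces $\wnabla_Z\xi=0$ rather than merely some projection of it. This is where the standing hypothesis $\dim M\geq 4$ enters, ensuring enough independent directions $X,Y$ are available; in dimension $2$ the terms could conspire to cancel. Once $\wnabla\xi=0$ is established the rest is purely formal. I would also remark that $\wnabla R=0$ appears verbatim on both sides, so it needs no manipulation; it is listed separately precisely because — unlike $\wnabla g$, $\wnabla J$, $\wnabla S$ — it is not automatic from the algebraic form of $S$ and the parallelism of $\xi$, $\zeta$, but is an independent curvature constraint.
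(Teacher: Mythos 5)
Your proposal is correct and is essentially the paper's own route: the paper gives no argument beyond calling this ``a straightforward computation (see [BGO])'', and your plan — observe that $\wnabla g=0$ and $\wnabla J=0$ hold automatically because each $S_X$ is skew-symmetric and commutes with $J$, then trade $\wnabla S=0$ for $\wnabla\xi=0$, $\wnabla\zeta=0$ via the Leibniz rule — is exactly that computation. The one step you leave schematic, injectivity of $(u,v)\mapsto g(X,Y)u-g(u,Y)X+\e g(X,JY)Ju-\e g(u,JY)JX-2g(v,JX)JY$, does hold for $\dim M\geq 4$: taking $X=Y$ non-null shows $u\in\mathrm{span}\{Y,JY\}$ for every non-null $Y$, hence $u=0$, and then the surviving term $-2g(v,JX)JY=0$ together with the invertibility of $J$ and nondegeneracy of $g$ gives $v=0$.
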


Equation $\wnabla R=0$ reads
\begin{equation}\label{nablaR}\left(\nabla_XR\right)_{YZWU}=-R_{S_XYZWU}-R_{YS_XZWU}-R_{YZS_XWU}-R_{YZWS_XU}\end{equation}
so applying second Bianchi's identity and substituting
(\ref{e-Kahler structure}) we have
\begin{eqnarray}\label{formula sum sym}
0 & = &
\Cyclic{\fdS}{\SXYZ}\left\{2g(X,\xi)R_{YZWU}+g(X,W)R_{YZ\xi
U}+g(X,U)R_{YZW\xi}\right.\nonumber\\
 & & \left.+2\e g(X,JY)R_{J\xi ZWU}+\e
g(X,JW)R_{YZJ\xi U}+\e g(X,JU)R_{YZWJ\xi}\right\}
\end{eqnarray}
Since $g(\xi,\xi)=0$ there exists we an orthonormal basis
$\{e_k\}$ such that $g(e_1,e_1)=1$, $g(e_2,e_2)=-1$, and
$\xi=g(\xi,e_1)(e_1+e_2)$. Whence, contracting the previous
formula with respect to $X$ and $W$ and applying first Bianchi's
identity, we obtain
\begin{eqnarray}\label{formula1}
(2n+2)R_{ZY\xi U} & = &
-2g(Y,\xi)r(Z,U)+2g(Z,\xi)r(Y,U)\nonumber\\
& & -2\e g(Y,JZ)r(J\xi,U)-g(Y,U)r(Z,\xi)\\
& & -\e g(Y,JU)r(Z,J\xi)+g(Z,U)r(Y,\xi)\nonumber\\
& &+\e g(Z,JU)r(Y,J\xi)\nonumber,
\end{eqnarray}
where $r$ denotes the Ricci tensor. With the same orthonormal
basis, contracting the previous expression with respect to $Y$ and
$U$ we arrive to $r(Z,\xi)=(\mathrm{s}/2n)g(Z,\xi)$, where $s$
stands for the scalar curvature. Setting $a=1/(2n+2)$ and
$\nu=\mathrm{s}/2n$, we can write
\begin{equation}\label{formula2}\frac{1}{a}R_{\xi U}=2\theta\wedge r(U)-2\nu\e
\theta(JU)F+\nu U^{\flat}\wedge\theta-\e
\nu(JU)^{\flat}\wedge(\theta\circ J),
\end{equation}
where $F$ denotes the symplectic form associated to $g$ and $J$.
From Bianchi's first identity we have $R_{WUJ\xi\cdot}=R_{\xi
JWU\cdot}-R_{\xi JUW\cdot}$ so we can write (\ref{formula1}) as
\begin{eqnarray}\label{formula3}
0 & = & 2\theta\wedge R_{WU}+W^{\flat}\wedge R_{\xi
U}-U^{\flat}\wedge R_{\xi W}\nonumber\\
& & -2\e F\wedge(R_{\xi JUW}-R_{\xi JWU})\\
& & -\e (JW)^{\flat}\wedge R_{\xi JU}+\e (JU)^{\flat}\wedge R_{\xi
JW}.\nonumber
\end{eqnarray}
Denoting by $\Xi(U)$ the right hand side of (\ref{formula2}) and
substituting in (\ref{formula3}) we obtain
\begin{eqnarray*}
0 & = & \frac{2}{a}\theta\wedge R_{WU}+W^{\flat}\wedge
\Xi(U)-U^{\flat}\wedge \Xi(W)\\
& & -2\e F\wedge \left(i_{W}\Xi(JU)-i_{U}\Xi(JW)\right)\\
& & -\e (JW)^{\flat}\wedge \Xi(JU)+\e (JU)^{\flat}\wedge \Xi(JW).
\end{eqnarray*}
Then, taking $W=\xi$ in the previous formula
$$0=\e(\theta\wedge(\theta\circ J))\wedge(r(JU)-\nu(JU)^{\flat}).$$
Now, since $U$ is arbitrary, denoting $\alpha=r-\nu g$, one has
$$\theta\wedge(\theta\circ J)\wedge\alpha(X)=0,$$
for any vector field $X$. This implies that
$$\alpha=\lambda\theta+\mu\theta\circ J,$$
for some $1$-forms $\lambda$ and $\mu$. Note that since $(M,g,J)$
is $\e$-K\"ahler, $\alpha=r-\nu g$ is symmetric and of type
$(1,1)$. Imposing this to the right hand side of the previous
equality we have that
$$\lambda=f\theta,\qquad \mu=-\e f(\theta\circ J),$$
for some function $f$, so that we obtain
\begin{equation}\label{Ricci}r=\nu g+f\left(\theta\otimes\theta-\e (\theta\circ
J)\otimes(\theta\circ J)\right).\end{equation} Substituting
\eqref{Ricci} in \eqref{formula2} we obtain
\begin{equation}\label{curv 1}\frac{1}{a}R_{\xi
U}=\nu R^0_{\xi U}+P_{\xi U},\end{equation} where
\begin{align*}
R^0_{XYZW} &= g(X,Z)g(Y,W)-g(X,W)g(Y,Z)-\e g(X,JZ)g(Y,JW)\\ &
\hspace{6em}+\e g(X,JW)g(Y,JZ)-2\e g(X,JY)g(Z,JW),
\end{align*}
and
$$P_{\xi U}=-2\e f\theta(JU)\theta\wedge(\theta\circ J).$$

\bigskip

On the other hand, from $\nabla \xi=S\cdot\xi$ and \eqref{e-Kahler
structure}, formula
$$R_{XY}Z=\nabla_{[X,Y]}Z-[\nabla_X,\nabla_Y]Z$$
gives
\begin{equation}\label{curv
2}R_{XY}\xi=-g(\xi,\xi)R^0_{XY}\xi+\Theta^{\zeta}_{XY}\xi=\Theta^{\zeta}_{XY}\xi,\end{equation}
where
\begin{align*}
\Theta^{\zeta}_{XY}\xi =&
-2g(\zeta,JY)g(X,J\xi)\xi+2g(\zeta,Y)g(X,J\xi)J\xi-4g(\zeta,JY)g(X,\xi)J\xi\\
 &+2g(\zeta,JX)g(Y,J\xi)\xi-2g(\zeta,X)g(Y,J\xi)J\xi+4g(\zeta,JX)g(Y,\xi)J\xi\\
 &+4g(\xi,\zeta)g(Y,JX)J\xi-4\e g(\zeta,JY)g(\xi,X)J\xi+4\e
 g(\zeta,JX)g(\zeta,Y)J\xi.
\end{align*}
Taking $Y=JX$ and comparing \eqref{curv 1} and \eqref{curv 2}
 one finds that
$$2a\nu g(\xi,JX)=0,\qquad 2a\nu g(\xi,X)=0,$$
for every $X$, so that $\nu =0$. Hence the scalar curvature
vanish. We now choose at every point $p\in M$ a basis
$$\{\xi,J\xi,q_1,Jq_1,X_i,JX_i\}$$ of $T_pM$, where
$g(\xi,q_1)=1$, $g(q_1,q_1)\neq 0$, and $\{X_i,JX_i\}$ is an
orthonormal basis of $\mathrm{span}\{\xi,J\xi,q_1,Jq_1\}^{\bot}$.
Comparing again \eqref{curv 1} and \eqref{curv 2} for $X=\xi$ and
$Y=Jq_1$, and for $X=J\xi$ and $Y=Jq_1$ we obtain that
$g(\zeta,J\xi)=0$ and $g(\zeta,\xi)=0$, so that
$\zeta\in\mathrm{span}\{\xi,J\xi\}^{\bot}$. Taking $X=X_i$ and
$Y=Jq_1$, and $X=JX_i$ and $Y=Jq_1$ we also have $g(\zeta,JX_i)=0$
and $g(\zeta,X_i)=0$ respectively, so that
$\zeta\in\mathrm{span}\{\xi,J\xi\}$. Finally, writing
$\zeta=\lambda\xi+\mu J\xi$ for some functions $\lambda$ and
$\mu$, and taking $X=q_1$ and $Y=Jq_1$ one finds $g(\zeta,Jq_1)=0$
and $2af=-2\e\lambda-4\lambda^2$, so that
$$\zeta=\lambda\xi,\qquad f=-\frac{1}{a}\lambda(\e+2\lambda).$$
Note that equations $\wnabla\xi=0$ and $\wnabla\zeta=0$ imply that
$\lambda$ must be constant. This agrees with the fact that the
Ricci form $$\rho=f\theta\wedge(\theta\circ J)$$ is closed as
$(M,g,J)$ is $\e$-K\"ahler. We have proved

\begin{proposition}\label{proposition zeta}
Let $(M,g,J)$ be a $\e$-K\"ahler manifold admitting a degenerate
$\e$-K\"ahler homogeneous structure of linear type $S$ given by
\eqref{e-Kahler structure}. Then $\zeta=\lambda\xi$ for some
$\lambda\in\R$ and the Ricci curvature is
$$r=-\frac{1}{a}\lambda(\e+2\lambda)\left(\theta\otimes\theta-\e (\theta\circ
J)\otimes(\theta\circ J)\right),$$ where $a=1/(\mathrm{dim}M+2)$
and $\theta=\xi^{\flat}$. In particular the scalar curvature
vanishes.
\end{proposition}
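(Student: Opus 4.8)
The strategy is to exploit the three conditions of Proposition~\ref{proposition 2}, $\wnabla\xi=0$, $\wnabla\zeta=0$ and $\wnabla R=0$, together with the two Bianchi identities, and to pin down $\zeta$ and $r$ by forcing two independent expressions for the curvature along the null direction $\xi$ to agree. First I would expand $\wnabla R=0$ in the form \eqref{nablaR}, substitute the explicit formula \eqref{e-Kahler structure} for $S$, and apply the second Bianchi identity, obtaining the cyclic identity \eqref{formula sum sym}. Since $g(\xi,\xi)=0$, one can adapt an orthonormal frame to the null vector $\xi$ and contract \eqref{formula sum sym} twice, using the first Bianchi identity at each step: the first contraction gives \eqref{formula1}, expressing the curvature components $R_{ZY\xi U}$ through the Ricci tensor, and the second shows that $\xi$ is a Ricci eigen-covector, $r(\,\cdot\,,\xi)=\nu\,g(\,\cdot\,,\xi)$ with $\nu=\sca/2n$.

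Next I would rewrite \eqref{formula1} in exterior-form notation, first as \eqref{formula2} and then, invoking the first Bianchi identity once more to trade $J\xi$-slots for $\xi$-slots, as the form-valued identity \eqref{formula3}. Substituting \eqref{formula2} into \eqref{formula3} and specialising to $W=\xi$ collapses everything to $\theta\wedge(\theta\circ J)\wedge\alpha(X)=0$ for all $X$, where $\alpha=r-\nu g$ and $\theta=\xi^{\flat}$. This wedge condition forces $\alpha=\lambda\theta+\mu\,(\theta\circ J)$ for some $1$-forms; imposing that $\alpha$ is symmetric and of type $(1,1)$ (automatic since $(M,g,J)$ is $\e$-K\"ahler) reduces the two $1$-forms to a single function $f$, yielding \eqref{Ricci}. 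Feeding \eqref{Ricci} back into \eqref{formula2} gives the first expression \eqref{curv 1} for $R_{\xi U}$.

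The second, independent expression comes from $\nabla\xi=S\cdot\xi$: plugging \eqref{e-Kahler structure} into $R_{XY}Z=\nabla_{[X,Y]}Z-[\nabla_X,\nabla_Y]Z$ produces \eqref{curv 2}, in which the term $-g(\xi,\xi)R^0_{XY}\xi$ disappears precisely because the structure is degenerate, leaving only the algebraic tensor $\Theta^{\zeta}$. Comparing \eqref{curv 1} with \eqref{curv 2}: taking $Y=JX$ gives $\nu=0$, so the scalar curvature vanishes; then, in an adapted basis $\{\xi,J\xi,q_1,Jq_1,X_i,JX_i\}$ with $g(\xi,q_1)=1$ and $\{X_i,JX_i\}$ an orthonormal basis of the orthogonal complement of $\mathrm{span}\{\xi,J\xi,q_1,Jq_1\}$, one tests the identity on the pairs $(X,Y)=(\xi,Jq_1),(J\xi,Jq_1),(X_i,Jq_1),(JX_i,Jq_1),(q_1,Jq_1)$ and reads off successively $g(\zeta,\xi)=g(\zeta,J\xi)=g(\zeta,X_i)=g(\zeta,JX_i)=g(\zeta,Jq_1)=0$, hence $\zeta=\lambda\xi$, together with $2af=-2\e\lambda-4\lambda^2$, i.e. $f=-\tfrac{1}{a}\lambda(\e+2\lambda)$. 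Finally $\wnabla\xi=0$ and $\wnabla\zeta=\wnabla(\lambda\xi)=0$ force $\lambda\in\R$ to be constant (consistently with the fact that the Ricci form $\rho=f\,\theta\wedge(\theta\circ J)$ is closed on an $\e$-K\"ahler manifold), and substituting $f=-\tfrac{1}{a}\lambda(\e+2\lambda)$ and $\nu=0$ into \eqref{Ricci} gives the stated formula for $r$.

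I expect the main obstacle to be the middle step, namely extracting from \eqref{formula sum sym}--\eqref{formula3} the very rigid shape \eqref{Ricci} of $\alpha=r-\nu g$: since $\xi$ is null, the two contractions and the $W=\xi$ specialisation must be carried out in a frame adapted to $\xi$, and the repeated use of the first Bianchi identity to move $J\xi$ into an $\xi$-slot is where the bookkeeping is delicate. Once \eqref{curv 1} and \eqref{curv 2} are available the remaining comparison is a finite check over basis vectors, and the constancy of $\lambda$ is immediate from $\wnabla\zeta=0$.
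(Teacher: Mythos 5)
Your proposal is correct and follows essentially the same route as the paper: the same chain \eqref{nablaR}--\eqref{formula sum sym}--\eqref{formula1}--\eqref{formula2}--\eqref{formula3} leading to \eqref{Ricci} and \eqref{curv 1}, the same second expression \eqref{curv 2} from $\nabla\xi=S\cdot\xi$ with the $g(\xi,\xi)R^0$ term dropping by degeneracy, and the same comparison first with $Y=JX$ to kill $\nu$ and then on the adapted basis $\{\xi,J\xi,q_1,Jq_1,X_i,JX_i\}$ to force $\zeta=\lambda\xi$ and $f=-\tfrac{1}{a}\lambda(\e+2\lambda)$, with constancy of $\lambda$ from $\wnabla\xi=0$ and $\wnabla\zeta=0$. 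No substantive differences from the paper's argument.
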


\bigskip


Since $b=0$,  formula \eqref{curv 1} becomes
$$R_{ZY\xi U}=aP_{ZY\xi U}=-2a\e f(\theta\wedge(\theta\circ J)\otimes (\theta\circ J))(Z,Y,U).$$
Looking again to formula \eqref{formula sum sym} we obtain
\begin{eqnarray}\label{formula sum sym 2}
-\Cyclic{\fdS}{\SXYZ}2g(X,\xi)R_{YZWU} & = &
\Cyclic{\fdS}{\SXYZ}2af\left\{(\theta\wedge(\theta \circ
J))\otimes (X^{\flat}\wedge(\theta\circ J))(Y,Z,W,U)\right.\\
 & + & \e(\theta\wedge(\theta \circ
J))\otimes (JX^{\flat}\wedge(\theta))(Y,Z,W,U)\\
& - & \left.2\e g(X,JY)\theta\otimes (\theta\wedge(\theta\circ
J))(Z,W,U)\right\}.
\end{eqnarray}
Substituting this in \eqref{nablaR} and after a quite long
computation we obtain
\begin{equation}\label{eq recursiva}\nabla_XR=4\theta(X)\otimes(R-\frac{1}{2}ag\boxtimes
r)-2a\epsilon \left((X^{\flat}\wedge(\theta\circ
J))\odot\rho+(JX^{\flat}\wedge(\theta))\odot\rho\right),\end{equation}
where $\rho$ is the Ricci form and $\boxtimes$ stands for the
$\e$-complex Kulkarni-Nomizu product defined as
\begin{align*}
h\boxtimes k(X_1,X_2,X_3,X_4) & =
h(X_1,X_3)k(X_2,X_4)+h(X_2,X_4)k(X_1,X_3)\\ &-h(X_1,X_4)k(X_2,X_3)-h(X_2,X_3)k(X_1,X_4)\\
 & -\epsilon h(X_1,JX_3)k(X_2,JX_4)-\epsilon
 h(X_2,JX_4)k(X_1,JX_3)\\ & +\epsilon h(X_1,JX_4)k(X_2,JX_3)+\epsilon
 h(X_2,JX_3)k(X_1,JX_4)\\
 & -2\epsilon h(X_1,JX_2)k(X_3,JX_4)-2\epsilon
 h(X_3,JX_4)k(X_1,JX_2),
\end{align*}
for $h$ and $k$ symmetric $(0,2)$-tensors. 

With the help of \eqref{eq recursiva} we now compute some terms of
the curvature tensor of $g$. We again choose a basis
$$\{\xi,J\xi,q_1,Jq_1,X_i,JX_i\}$$
of $T_pM$ for every $p\in M$. Taking the symmetric sum with
respect to $X,Y,Z$ in \eqref{eq recursiva} we have
\begin{align*}
0 & = 4\theta(X)\left(R_{YZWU}-2ag\boxtimes r_{YZWU}\right)\\
  & -2a\e\left((X^{\flat}\wedge(\theta\circ
J))\odot\rho+(JX^{\flat}\wedge(\theta))\odot\rho\right)(Y,Z,W,U)\\
 & -2a\e\left((Y^{\flat}\wedge(\theta\circ
J))\odot\rho+(JY^{\flat}\wedge(\theta))\odot\rho\right)(Z,X,W,U)\\
 & -2a\e\left((Z^{\flat}\wedge(\theta\circ
J))\odot\rho+(JZ^{\flat}\wedge(\theta))\odot\rho\right)(X,Y,W,U).
\end{align*}
Setting $Y,Z\in\mathrm{span}\left\{\xi,J\xi\right\}^{\bot}$ we
obtain
\begin{equation}\label{R_xy}R_{YZWU}=-8a\e g(Y,JZ)\rho(W,U),\qquad Y,Z\in\mathrm{span}\{\xi,J\xi\}^{\bot}\end{equation}
for every $W,U$. On the other hand setting $X=q_1$, $Y=Jq_1$ and
$Z\in\mathrm{span}{X_i,JX_i}$ we find
$$R_{YZWU}=af\left(g(Z,W)\theta(JU)-g(Z,U)\theta(JW)-g(Z,JW)\theta(U)+g(Z,JU)\theta(W)\right),$$
for every $W,U$, so that
\begin{align}\label{R_q1x}R_{q_1ZWU} &=af\left(g(JZ,U)\theta(JW)-g(JZ,W)\theta(JU)+\e g(Z,U)\theta(W)\right.\nonumber\\
& \left.-\e g(Z,W)\theta(U)\right)\end{align} for
$Z\in\mathrm{span}\{X_i,JX_i\}$ and all $W,U$.

\begin{proposition}\label{proposition Ricci-flat}
$(M,g,J)$ is Ricci-flat.
\end{proposition}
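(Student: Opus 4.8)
The plan is to reduce the statement to the vanishing of the constant $f$ appearing in Proposition~\ref{proposition zeta}. By that proposition the scalar curvature is already zero and
\[
r=f\bigl(\theta\otimes\theta-\e(\theta\circ J)\otimes(\theta\circ J)\bigr),\qquad f=-\tfrac1a\lambda(\e+2\lambda)\in\R ,
\]
so $(M,g,J)$ is Ricci-flat if and only if $f=0$, equivalently $\lambda=0$ or $\lambda=-\e/2$. Thus the whole content is the vanishing of $f$, and this is what I would prove.

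To obtain it I would exploit the fact that the partial information on the curvature collected in \eqref{R_xy} and \eqref{R_q1x} is over-determined: those components must still obey the pair symmetry $R_{ABCD}=R_{CDAB}$ and the first Bianchi identity. Fix a point and the adapted basis $\{\xi,J\xi,q_1,Jq_1,X_i,JX_i\}$. Note that $\mathrm{span}\{\xi,J\xi\}$ is totally isotropic (as $g(\xi,\xi)=g(\xi,J\xi)=g(J\xi,J\xi)=0$), that $\mathrm{span}\{X_i,JX_i\}$ is a non-degenerate $J$-invariant subspace contained in $\mathrm{span}\{\xi,J\xi\}^{\bot}$, and set $c:=g(\xi,Jq_1)$; recall $\theta(q_1)=g(\xi,q_1)=1$. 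Then, for $Z,U\in\mathrm{span}\{X_i,JX_i\}$:
\begin{enumerate}
\item evaluating \eqref{R_q1x} at $W=q_1$ gives $R_{q_1Zq_1U}=af\bigl(c\,g(JZ,U)+\e\,g(Z,U)\bigr)$; imposing $R_{q_1Zq_1U}=R_{q_1Uq_1Z}$ and using that $(A,B)\mapsto g(JA,B)$ is skew forces $af\,c=0$ (take $Z=X_1$, $U=JX_1$);
\item using \eqref{R_q1x} to compute $R_{q_1ZJq_1U}$ and $R_{q_1UZJq_1}$ and inserting these into the first Bianchi identity $R_{q_1ZJq_1U}+R_{q_1Jq_1UZ}+R_{q_1UZJq_1}=0$, one solves for $R_{q_1Jq_1ZU}=2af\e\,g(JZ,U)$;
\item since $Z,U\in\mathrm{span}\{\xi,J\xi\}^{\bot}$, \eqref{R_xy} gives $R_{ZUq_1Jq_1}=-8a\e\,g(Z,JU)\,\rho(q_1,Jq_1)=-8af\e(\e-c^{2})\,g(Z,JU)$, using $\rho=f\,\theta\wedge(\theta\circ J)$ and $\theta(Jq_1)=c$.
\end{enumerate}
Comparing (2) and (3) through $R_{q_1Jq_1ZU}=R_{ZUq_1Jq_1}$ together with $g(JZ,U)=-g(Z,JU)$ yields $f\bigl(4\e-4c^{2}-1\bigr)=0$. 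Since $a\neq0$, if $f\neq0$ then (1) gives $c=0$, and then $f(4\e-1)=0$ with $4\e-1\neq0$ --- a contradiction. Hence $f=0$ and $r=0$.

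The main obstacle I anticipate is not any of the individual identities above but the degenerate configurations in which this argument becomes vacuous: when $\dim M=4$ there are no vectors $X_i$, and in the para-complex case $J\xi$ may be proportional to $\xi$, so that $\mathrm{span}\{\xi,J\xi\}$ is only one-dimensional. In those situations the number of independent curvature components is very small, and I would conclude by substituting the explicit expressions for $R_{\xi\,\cdot}$ and $R_{\cdot\,\xi}$ from \eqref{curv 1}--\eqref{curv 2}, together with \eqref{eq recursiva}, into the algebraic symmetries of $R$ and into the remaining equation $\wnabla R=0$; the outcome is again $f=0$ (indeed these cases turn out to be flat). A minor additional point is that $c=g(\xi,Jq_1)$ is a priori frame-dependent, but it enters the conclusion only through the combinations $af\,c$ and $f(4\e-4c^{2}-1)$, which already force $f=0$, so no preferred normalisation of $q_1$ is required.
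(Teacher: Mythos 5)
Your core argument is correct and takes a genuinely different route from the paper at the decisive step. Both proofs start from the same ingredients \eqref{R_xy} and \eqref{R_q1x}, but the paper contracts them over an adapted orthonormal basis to compute the Ricci tensor in terms of itself, obtaining $r=(4a+2a\e\sum_i\varepsilon^i)\,r$ and then a dimensional contradiction from $a=1/(\mathrm{dim}M+2)$ unless $r=0$; you instead impose the purely algebraic symmetries of $R$ (pair symmetry and the first Bianchi identity) on the individual components $R_{q_1Zq_1U}$, $R_{q_1Jq_1ZU}$ and $R_{ZUq_1Jq_1}$, which I checked and which do give $af\,c=0$ and $f(4\e-4c^2-1)=0$, hence $f=0$ and $r=0$ via \eqref{Ricci} with $\nu=0$. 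Carrying $c=g(\xi,Jq_1)$ along is harmless (in fact one may normalise $c=0$ by choosing $q_1$ with $g(J\xi,q_1)=0$, which is what the orthonormal basis in the paper's proof implicitly requires), and reducing the statement to $f=0$ is the right target. Your approach buys a pointwise, trace-free argument; the paper's contraction buys uniformity in the dimension.

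That uniformity is exactly where your proposal has a genuine gap: steps (1)--(3) need at least one pair $X_1,JX_1$, i.e. $\mathrm{dim}\,M\geq 6$, while the proposition must also cover $\mathrm{dim}\,M=4$ ($n=0$), where $\mathrm{span}\{\xi,J\xi\}^{\bot}=\mathrm{span}\{\xi,J\xi\}$ and \eqref{R_xy}, \eqref{R_q1x} carry no information about $f$. You only assert that this case follows by ``substituting into the symmetries and $\wnabla R=0$'', and the parenthetical claim that these cases are flat is false for $\mathrm{dim}\,M=4$: the local models of Propositions \ref{proposition local 1} and \ref{proposition local 2} with $n=0$ are Ricci-flat but have $R_0\neq 0$, so the fallback does not appear to have been actually verified. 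The paper's trace argument, by contrast, handles $n=0$ automatically (the sum $\sum_i\varepsilon^i$ is empty and $4a=4/(\mathrm{dim}M+2)\neq 1$), so to make your proof complete you must either carry out the four-dimensional computation explicitly or revert to the contraction argument in that case. The other degeneration you worry about, $J\xi$ proportional to $\xi$ when $\e=1$, is not something you need to resolve: the paper's own derivation already presupposes the basis $\{\xi,J\xi,q_1,Jq_1,\dots\}$, and for $\e=-1$ the independence is automatic.
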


\begin{proof}
Let $g(q_1,q_1)=b$ and suppose for simplicity that $b>0$ (the case
$b<0$ is analogous). Denoting $q_2=Jq_1$, we choose an orthonormal
basis
$$\left\{\sqrt{b}\left(\xi-\frac{q_1}{b}\right),\sqrt{b}\left(J\xi-\frac{q_2}{b}\right),\frac{q_1}{\sqrt{b}},\frac{q_2}{\sqrt{b}},X_i,JX_i\right\}$$
of $T_pM$ for every $p\in M$, which has signature
$(-1,\e,1,-\e,\varepsilon^i,-\epsilon\varepsilon^i)$ where
$g(X_i,X_i)=\varepsilon^i\in\{\pm 1\}$. We compute the Ricci
curvature by contracting the curvature tensor with respect to this
orthonormal basis and using \eqref{R_xy} and \eqref{R_q1x}:
\begin{align*}
r(W,U) &=
-R\left(W,\sqrt{b}\left(\xi-\frac{q_1}{b}\right),U,\sqrt{b}\left(\xi-\frac{q_1}{b}\right)\right)\\
 & +\e R\left(W,\sqrt{b}\left(\xi-\frac{q_2}{b}\right),U,\sqrt{b}\left(\xi-\frac{q_2}{b}\right)\right)\\
 & +
 R\left(W,\frac{q_1}{\sqrt{b}},U,\frac{q_1}{\sqrt{b}}\right)-\e R\left(W,\frac{q_2}{\sqrt{b}},U,\frac{q_2}{\sqrt{b}}\right)\\
 & +\varepsilon^iR(W,X_i,U,X_i)-\e\varepsilon^iR(W,JX_i,U,JX_i)\\
 & = 4af\left(\theta\otimes\theta-\e (\theta\circ
J)\otimes(\theta\circ J)\right)\\
& + 2af\e\sum_i\varepsilon^i\left(\theta\otimes\theta-\e
(\theta\circ J)\otimes(\theta\circ J)\right)\\
& = (4a+2a\e\sum_i\varepsilon^i)r(W,U).
\end{align*}
We deduce that if $r(W,U)\neq 0$ then
$4a+2a\e\sum_i\varepsilon^i=1$, therefore
$$\mathrm{dim}M+2=4+2a\e\sum_i\varepsilon^i,$$
whence
$$\mathrm{dim}M=2+2a\e\sum_i\varepsilon^i<\mathrm{dim}M.$$
Since this is impossible we conclude that $r=0$.
\end{proof}

\begin{corollary}\label{corollary lambda values}
The only possible values for $\lambda$ are $\lambda=0$ and
$\lambda=-\frac{\e}{2}$.
\end{corollary}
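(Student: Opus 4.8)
The plan is to combine the two independent expressions we now have for the Ricci curvature. From Proposition~\ref{proposition zeta} we know that
$$r=-\frac{1}{a}\lambda(\e+2\lambda)\left(\theta\otimes\theta-\e(\theta\circ J)\otimes(\theta\circ J)\right),$$
while Proposition~\ref{proposition Ricci-flat} asserts that $r=0$. Since $\theta=\xi^{\flat}$ is non-zero (as $\xi\neq 0$) and $\theta\otimes\theta-\e(\theta\circ J)\otimes(\theta\circ J)$ does not vanish identically, the only way to reconcile these is to force the scalar factor to vanish.

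Concretely, I would argue as follows. First observe that $\theta\otimes\theta-\e(\theta\circ J)\otimes(\theta\circ J)$ is a non-trivial tensor: evaluating on $(\xi,\xi)$ gives $\theta(\xi)^2-\e\theta(J\xi)^2$, and even when $g(\xi,\xi)=0$ one can pick a vector $v$ with $\theta(v)\neq 0$ so that the tensor is non-zero on $(v,v)$ (or on $(v,Jv)$), using that $\e(\theta\circ J)(v)=\e\theta(Jv)$ and $\theta,\theta\circ J$ are linearly independent as $1$-forms. Hence $r=0$ together with the formula in Proposition~\ref{proposition zeta} yields $\lambda(\e+2\lambda)=0$ (recall $a\neq 0$).

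Solving $\lambda(\e+2\lambda)=0$ gives $\lambda=0$ or $\lambda=-\e/2$, which is exactly the claimed dichotomy (note $-\e/2=1/2$ for $\e=-1$ and $-\e/2=-1/2$ for $\e=1$, but the unified statement $\lambda=-\e/2$ covers both). There is essentially no obstacle here: the entire content has already been established in the preceding propositions, and the corollary is just the algebraic consequence of equating the two curvature computations. The only minor point requiring a line of justification is that the symmetric $(0,2)$-tensor $\theta\otimes\theta-\e(\theta\circ J)\otimes(\theta\circ J)$ is not identically zero, which follows immediately from $\theta\neq 0$ and the linear independence of $\theta$ and $\theta\circ J$.
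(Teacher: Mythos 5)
Your proposal is correct and is essentially the paper's own (implicit) argument: the paper likewise deduces from Ricci-flatness that the factor $f=-\frac{1}{a}\lambda(\e+2\lambda)$ in the Ricci formula of Proposition \ref{proposition zeta} must vanish, giving $\lambda=0$ or $\lambda=-\e/2$. Your extra remark that $\theta\otimes\theta-\e(\theta\circ J)\otimes(\theta\circ J)$ is non-zero (using the linear independence of $\theta$ and $\theta\circ J$, which the paper assumes implicitly by taking $\{\xi,J\xi,\dots\}$ as part of a basis) is exactly the point the paper leaves unstated.
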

In the next section we shall study the cases $\lambda=0$ and
$\lambda=-\frac{\e}{2}$ separately.

\begin{proposition}\label{proposition holonomy}
The curvature tensor of $g$ is given by
$$R=k (\theta\wedge (\theta\circ J))\otimes (\theta\wedge (\theta\circ J)),$$
for some function $k$. Moreover, if $k\neq 0$ the holonomy algebra
of $g$ is given by
$$\f{hol}\cong\bb{R}\left(\begin{array}{ccl}i_{\e} & i_{\e} & 0\\ -i_{\e} & -i_{\e} & 0\\ 0 & 0 & 0_n\end{array}\right),$$
where $i_{\e}$ is the $\e$-complex imaginary unit, which is a one
dimensional subalgebra of $\f{su}(1,1)\subset\f{su}(r,s)$,
$r+s=n+2$, for $\e=-1$, and $\f{sl}(2,\R)\subset\f{sl}(n+2,\R)$
for $\e=1$.

\end{proposition}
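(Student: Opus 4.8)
The plan is to assemble the curvature formula from the partial information already in hand and then read off the holonomy as the image of the curvature endomorphisms via the Ambrose--Singer / Berger mechanism. First I would note that Proposition~\ref{proposition Ricci-flat} gives $r=0$, hence $f=0$ by the Ricci formula of Proposition~\ref{proposition zeta}, so the terms $P_{\xi U}$, $R_{q_1ZWU}$ in \eqref{R_q1x}, and the $g\boxtimes r$ and $\rho$ contributions in \eqref{eq recursiva} and \eqref{R_xy} all vanish. What survives is: $R_{\xi U}=0$ for all $U$ (from \eqref{curv 1} with $\nu=f=0$), $R_{YZWU}=0$ whenever $Y,Z\perp\mathrm{span}\{\xi,J\xi\}$ (from \eqref{R_xy}), and $R_{q_1ZWU}=0$ for $Z\in\mathrm{span}\{X_i,JX_i\}$. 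Combining these, the only components of $R$ that can be nonzero are those all of whose ``slots'' pair nontrivially with $\xi$ and $J\xi$; since $R$ is a $(1,1)$-form-valued symmetric form annihilating $\xi$, its only possible surviving piece is a multiple of $(\theta\wedge(\theta\circ J))\otimes(\theta\wedge(\theta\circ J))$. This yields $R=k\,(\theta\wedge(\theta\circ J))\otimes(\theta\wedge(\theta\circ J))$ for some function $k$; equation \eqref{eq recursiva} with $r=0$, $\rho=0$ then reduces to $\nabla_X R = 4\theta(X)\otimes R$, which is consistent and in particular forces $k$ to behave appropriately, but for the holonomy statement only the algebraic form of $R$ matters.

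Next I would compute the holonomy algebra. By the Ambrose--Singer construction (equivalently, since $\wnabla R=0$ the curvature operators are $\wnabla$-parallel and $\f{hol}$ is spanned by the $R_{XY}$), $\f{hol}$ is the Lie algebra generated by the endomorphisms $R_{XY}\in\f{so}(T_pM)$. From the explicit form $R = k\,(\theta\wedge(\theta\circ J))\otimes(\theta\wedge(\theta\circ J))$, every $R_{XY}$ is a scalar multiple of the single endomorphism $A$ metrically dual to $(\theta\wedge(\theta\circ J))$, namely $A = \theta\otimes(J\xi)^{\flat\sharp}$-type expression; concretely, in the basis $\{\xi,J\xi,q_1,q_2,X_i,JX_i\}$ one has $A(\xi)=A(J\xi)=0$, $A$ maps $q_1\mapsto c\,J\xi$, $q_2\mapsto -c\,\xi$ (up to normalization), and kills the $X_i,JX_i$. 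I would then check $[A,A]=0$, so $\f{hol}=\R A$ is one-dimensional whenever $k\neq 0$, and write $A$ in the orthonormal-type frame of Proposition~\ref{proposition Ricci-flat} to recognize its matrix as the stated block $\begin{pmatrix} i_\e & i_\e & 0\\ -i_\e & -i_\e & 0\\ 0 & 0 & 0_n\end{pmatrix}$ inside $\f{u}(r,s)$ (resp.\ $\f{gl}(n,\R)$). Finally I would verify the trace and the $J$-commutation conditions to place this line inside $\f{su}(1,1)$ (resp.\ $\f{sl}(2,\R)$) acting on the degenerate plane $\mathrm{span}\{\xi,J\xi\}\oplus\mathrm{span}\{q_1,q_2\}$, and trivially on the orthogonal complement.

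The main obstacle I anticipate is the first step: carefully ruling out all the a priori nonzero components of $R$ and cleanly deriving $R=k\,(\theta\wedge(\theta\circ J))\otimes(\theta\wedge(\theta\circ J))$. The inputs \eqref{R_xy} and \eqref{R_q1x} handle components with two legs away from $\{\xi,J\xi\}$ and components along $q_1$ with a leg in $\mathrm{span}\{X_i,JX_i\}$, but one must still control, e.g., components of the form $R_{J\xi\, q_1\, W\, U}$ and $R_{\xi\, q_1\, W\, U}$; the latter vanish by $R_{\xi U}=0$, and the former must be pinned down using the $(1,1)$-form symmetry of $R$ (as $(M,g,J)$ is $\e$-K\"ahler, $R_{XY}$ commutes with $J$ and $R$ is $J$-invariant in all four arguments) together with the first Bianchi identity and the surviving pieces of \eqref{eq recursiva}. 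Once $R$ is in the claimed rank-one form, the holonomy computation is a short linear-algebra verification: computing $[A,A]=0$, exponentiating, and matching the matrix against $\f{su}(1,1)$ or $\f{sl}(2,\R)$ is routine and I would not belabor it.
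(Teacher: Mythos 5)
Your derivation of the algebraic form of $R$ is essentially sound and in fact follows a different route from the paper's: you assemble the vanishing statements from \eqref{curv 1}, \eqref{R_xy} and \eqref{R_q1x} (with $\nu=f=0$), together with the pair symmetry and $J$-invariance of $R$, to kill every component having a slot in $\mathrm{span}\{\xi,J\xi,X_i,JX_i\}$, so that $R$ descends to the two-dimensional quotient and must be a multiple of $(\theta\wedge(\theta\circ J))\otimes(\theta\wedge(\theta\circ J))$. The paper instead takes the cyclic sum of $\nabla R=4\theta\otimes R$, applies the second Bianchi identity to obtain $\theta\wedge R_{WU}=0$ and, by the $\e$-K\"ahler symmetries, $(\theta\circ J)\wedge R_{WU}=0$, which forces the same form; either way works, and your flagged use of $J$-invariance to handle the $q_2$- and $J\xi$-slots is exactly the right tool.

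The holonomy part, however, has a genuine gap. The assertion that ``by the Ambrose--Singer construction \dots\ $\f{hol}$ is the Lie algebra generated by the endomorphisms $R_{XY}$'' is not correct as stated: the Ambrose--Singer theorem says $\f{hol}_p$ is spanned by the parallel translates $\tau^{-1}\circ R_{\tau X\,\tau Y}\circ\tau$ along all curves from $p$, so the pointwise span of the $R_{XY}$ is only a \emph{lower} bound for $\f{hol}$; and $\wnabla R=0$ expresses parallelism with respect to the canonical connection $\wnabla$, which says nothing directly about the Levi-Civita holonomy. Your remark that ``for the holonomy statement only the algebraic form of $R$ matters'' is precisely where the argument fails: one must bound $\f{hol}$ from above. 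The paper does this by invoking real-analyticity to identify $\f{hol}$ with the infinitesimal holonomy $\bigcup_k\f{m}_k$, and then using the recurrences $\nabla R=4\theta\otimes R$ and $\nabla\theta=\theta\otimes\theta+(2\lambda+\e)(\theta\circ J)\otimes(\theta\circ J)$ to show $\f{m}_k=\f{m}_0$ for all $k$, so that the (infinitesimal) holonomy reduces to the one-dimensional span of the single curvature endomorphism, which is then written in the stated matrix form. An alternative repair along your lines would be to check that the line in $\Lambda^2T^*M$ spanned by $\theta\wedge(\theta\circ J)$ is $\nabla$-parallel (this follows from the formula for $\nabla\theta$ and $\nabla J=0$), so that all parallel translates of curvature endomorphisms stay in the line $\R A$ and the genuine Ambrose--Singer theorem gives the upper bound; but some such step is indispensable, and without it the claim $\dim\f{hol}=1$ is unproved.
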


\begin{proof}
Since $(M,g,J)$ is Ricci-flat, we have that $f=0$, so that
\eqref{eq recursiva} becomes
$$\nabla R=4\theta\otimes R.$$
Taking symmetric sum in the previous formula and applying second
Bianchi identity we have that $\theta\wedge R_{WU}=0$ for every
$W,U$. But from the $\e$-K\"ahler symmetries of $R$ we also have
$(\theta\circ J)\wedge R_{WU}=0$. These force the curvature to be
of the form
$$R=k (\theta\wedge (\theta\circ J))\otimes (\theta\wedge (\theta\circ J)),$$
for some function $k$.

On the other hand, since $(M,g,J)$ is real analytic, the
infinitesimal holonomy algebra coincides with the holonomy algebra
(see \cite{KN}). Recall that the infinitesimal holonomy algebra at
$p\in M$ is defined as
$\f{hol}'=\bigcup_{k=0}^{\infty}\f{m}_k$, where
$$\f{m}_0=\mathrm{span}\{R_{XY}/X,Y\in T_pM\}$$
and
$$\f{m}_k=\mathrm{span}\left\{\f{m}_{k-1}\cup\{(\nabla_{Z_k}\ldots\nabla_{Z_1}R)_{XY}/Z_1,\ldots,Z_k,X,Y\in T_pM\}\right\}.$$
As a simple computation shows one has
$$\nabla \theta=\theta\otimes\theta+(2\lambda+\e)(\theta\circ J)\otimes (\theta\circ J).$$
It is easy to see that this together with the recurrent formula
$\nabla R=4\theta\otimes R$ imply that
$\f{m}_0=m_1=\ldots=\f{m}_k$ for every $k\in \mathbb{N}$, so that
$\f{hol}'=\f{m}_0$. Now, since $R=k (\theta\wedge (\theta\circ
J))\otimes (\theta\wedge (\theta\circ J))$ the space $\f{m}_0$ is
the one dimensional space generated by the endomorphism
$$\begin{array}{rrcl}
A: & T_pM & \to & T_pM\\
   & \xi,J\xi & \mapsto & 0\\
   & q_1 & \mapsto & J\xi\\
   & q_2 & \mapsto & \e\xi\\
   & X_i,JX_i & \mapsto & 0.
\end{array}$$
This endomorphism is expressed as
$$\frac{1}{b}\left(\begin{array}{ccl}i_{\e} & i_{\e} & 0\\ -i_{\e} & -i_{\e} & 0\\ 0 & 0 & 0_n\end{array}\right)$$
with respect to the $\e$-complex orthonormal basis
$$\left\{\frac{1}{\sqrt{|b|}}(q_1+\e i_{\e}q_2),\left(\frac{1}{\sqrt{|b|}}q_1-s\sqrt{|b|}\xi\right)+
\e
i_{\e}\left(\frac{1}{\sqrt{|b|}}q_2-s\sqrt{|b|}J\xi\right),X_i+\e
i_{\e}JX_i\right\},$$ where $g(q_1,q_1)=b$ and $s$ is the sign of
$b$.
\end{proof}


As a consequence of Proposition \ref{proposition holonomy} we have
that for all the values $\e=\pm 1$ and $\lambda=0,-\frac{\e}{2}$,
$(M,g,J)$ is an \textit{Osserman manifold} with a $2$-step
nilpotent Jacobi operator. It is also easy to see that $(M,g,J)$
is VSI (vanishing scalar invariants). Finally, it is worth noting
that if $(M,g,J)$ is connected and simply-connected, then it is
the product of a $2n$-dimensional $\e$-complex flat and totally
geodesic manifold which can be thought of as an $\e$-complex
wavefront, and a $4$-dimensional Walker $\e$-K\"ahler manifold
with a parallel null $\e$-complex vector field, which can be think
of as the $\e$-complex time and direction of propagation of the
wave.

%
%
%
%
%
%

\section{Local form of the metrics}

In previous sections (Prop. \ref{proposition zeta} and Prop.
\ref{proposition Ricci-flat}) we have seen that an $\e$-K\"ahler
manifold $(M,g,J)$ admitting a degenerate homogeneous
$\e$-K\"ahler structure of linear type satisfies
$\zeta=\lambda\xi$ for some constant $\lambda\in \R$ and is
Ricci-flat. As stated in Corollary \ref{corollary lambda values}
this implies that the only possible values for $\lambda$ are
$\lambda=0$ and $\lambda=-\frac{\e}{2}$. Hereafter $M$ is supposed
to be non-flat and of dimension $2n+4$.

\subsection{$\lambda=-\frac{\e}{2}$}\label{subsection local lambda=0.5}

We shall obtain the local form of the metric in the case
$\lambda=-\frac{\e}{2}$ for both values of $\e$ simultaneously.

Substituting the value $\lambda=-\frac{\e}{2}$ in \eqref{e-Kahler
structure} we have obtain
\begin{equation*}
S_{X}Y = g(X,Y)\xi-g(\xi,Y)X+\e g(X,JY)J\xi-\e g(\xi,JY)JX + \e
g(\xi,JX)JY.
\end{equation*}
The condition $\wnabla \xi=0$ then implies
$$\nabla\xi=\theta\otimes\xi,$$
which gives
\begin{align*}\nabla \theta & =\theta\otimes\theta,\nonumber\\
\nabla \theta\circ J & =\theta\otimes(\theta\circ J).
\end{align*}
In particular $d\theta=0$ so that fixing a point $p\in M$ there is
a neighborhood $\mathcal{U}$ and a function $v:\mathcal{U}\to \R$
such that $\theta=dv$. We consider
$$w_1=e^{-v},$$
whence $dw_1=-e^{-v}=-w_1\theta$. We now consider
$$dw_1\circ J=-w_1(\theta\circ J).$$
Differentiating we obtain
$$d(dw_1\circ J)=-dw_1\wedge(\theta\circ J)-w_1d(\theta\circ J)=w_1\theta\wedge(\theta\circ J)-w_1\theta\wedge(\theta\circ J)=0.$$
Therefore, reducing $\mathcal{U}$ if necessary, there is a
function $w_2:\mathcal{U}\to \R$ such that $dw_2=\e dw_1\circ J$.
Let $\C^{\e}$ denote the complex or the para-complex numbers
depending on the value of $\e$ and $i_{\e}$ the complex or
para-complex imaginary unit accordingly. We consider the function
$w=w_1+i_{\e}w_2$. Then $dw=dw_1+\e i_{\e}(dw_1\circ J)$, so that
$dw$ is of type $(1,0)$ with respect to $J$ and
$w:\mathcal{U}\to\C^{\e}$ is $\e$-holomorphic. In addition it is a
straightforward computation to see that
$$\nabla dw=-dw_1\otimes\theta-w_1\nabla\theta-i_{\e}dw_1\otimes(\theta\circ J)-i_{\e}w_1\nabla(\theta\circ J)=0,$$
i.e., $dw$ is a nowhere vanishing parallel $1$-form.

The function $w:\mathcal{U}\to\C^{\e}$ defines a foliation of
$\mathcal{U}$ by $\e$-complex hypersurfaces
$\mathcal{H}_{\tau}=w^{-1}(\tau)$, $\tau\in\C^{\e}$ (where
$w^{-1}(\tau)$ is non empty). Note that since the tangent space to
$\mathcal{H}_{\tau}$ is given by the kernel of $dw$, the
hypersurfaces $\mathcal{H}_{\tau}$ are tangent to the distribution
$\mathrm{span}\{\xi,J\xi\}^{\bot}$. We consider the vector field
$$Z=\mathrm{grad}(w_1)=dw_1^{\sharp}.$$
It is easy to see that by construction
$$JZ=-\e\mathrm{grad}(w_2).$$
These vector fields are written as
\begin{align*}
Z &= -w_1\xi,\\
JZ &= -w_1J\xi,
\end{align*}
so that
$$\nabla
Z=-dw_1\otimes\xi-w_1\nabla\xi=w_1\theta\otimes\xi-w_1\theta\otimes\xi=0,$$
and thus also $\nabla JZ=0$. This implies in particular that $Z$
and  $JZ$ are commuting $\e$-holomorphic Killing vector fields.

We now look at the holonomy of $g$ at $p$, which was computed in
Proposition \ref{proposition holonomy}. Using the same notation as
in the previous section we denote
$E=\mathrm{span}\{\xi,J\xi,q_1,q_2\}\subset T_pM$. This subspace
is invariant under the holonomy action and so is $E^{\bot}$. In
fact, the holonomy action on $E^{\bot}$ is trivial. This implies
that taking an orthonormal basis $\{(X_a)|_p,(JX_a)|_p\}$,
$a=1,\ldots,n$, of $E^{\bot}$ we can extend them to vector fields
$\{X_a,JX_a\}$
 using the parallel transport, i.e., we have an
orthonormal reference $\{X_a,JX_a\}$ on $\mathcal{U}$ such that
$\nabla X_a=0=\nabla JX_a$, $a=1,\ldots,n$. In particular they are
commuting $\e$-holomorphic Killing vector fields. In addition, let
$\gamma$ be any smooth curve on $\mathcal{U}$, we have
$$\dt \left(dw(X_a)_{\gamma(t)}\right)=\left(\nabla_{\dot{\gamma}(t)}dw\right)(X_a)+dw\left(\nabla_{\dot{\gamma}(t)}X_a\right)=0,$$
whence the function $dw(X_a)$ is constant along $\gamma$ and takes
the value $0$ at $p$. This implies that $X_a$ and thus $JX_a$ is
tangent to the foliation $\mathcal{H}_{\tau}$. Finally note that
since they are parallel $X_a$ and $JX_a$ commute with $Z$ and
$JZ$.

We have thus constructed a set of commuting para-holomorphic
Killing vector fields $\{Z,JZ,X_a,JX_a\}$ tangent to
$\mathcal{H}_{\tau}$. Therefore, reducing $\mathcal{U}$ if
necessary, we can take $\e$-complex coordinates $\{w,z,z^a\}$ on
$U$ such that $\partial_z=\frac{1}{2}(Z+\e i_{\e}JZ)$,
$\partial_{z^a}=\frac{1}{2}(X_a+\e i_{\e}JX_a)$. Note that since
the distributions $\mathrm{span}\{\partial_w,\partial_z\}$ and
$\mathrm{span}\{\partial_{z^a},a=1,\ldots,n\}$ are invariant by
holonomy, the vector fields $X_a$ and $JX_a$ are orthogonal to
$\mathrm{span}\{\partial_w,\partial_z\}$. Writing
$z=z^1+i_{\e}z^2$, $z^a=x^a+i_{\e}y^a$ and $w=w^1+i_{\e}w^2$, and
rearranging the coordinates as $\{z^1,z^2,w^1,w^2,x^a,y^a\}$, we
have that the metric with respect to these coordinates is
$$g=\begin{pmatrix}
0 & 0 & 1 & 0 & 0 & \ldots & 0\\
0 & 0 & 0 & -\e & 0 & \ldots & 0\\
1 & 0 & b & 0 & 0 & \ldots & 0\\
0 & -\e & 0 & -\e b & 0 & \ldots & 0\\
0 & 0 & 0 & 0 & & & \\
\vdots & \vdots & \vdots & \vdots & & \Sigma & \\
0 & 0 & 0 & 0 & & &
\end{pmatrix},$$
for some function $b$, where
$$\Sigma=\mathrm{diag}(\begin{pmatrix}\varepsilon^a & 0\\0 &-\e\varepsilon^a\end{pmatrix},a=1,\ldots,n),$$
with $\varepsilon^a=g(X_a,X_a)\in\{\pm 1\}$. With respect to this
coordinates the $\e$-complex structure reads
$$J=\begin{pmatrix}
0 & \e &        &  &  \\
1 & 0 &        &  &  \\
  &   & \ddots &  &  \\
  &   &        & 0 & \e\\
  &   &        & 1 & 0
\end{pmatrix}.$$
Imposing that $\partial_{z^1}$, $\partial_{z^2}$, $\partial_{x^a}$
and $\partial_{y}^a$ are parallel it is easy to see that $b$ does
not depend on $z^1,z^2,x^a,y^a$.

Finally, computing the curvature tensor with respect to this
coordinates we obtain
$$R=\frac{1}{2}\Delta^{\e}b(dw^1\wedge dw^2)\otimes(dw^1\wedge dw^2),$$
where
$$\Delta^{\e}=-\e\frac{\partial^2}{\partial (w^1)^2}+\frac{\partial^2}{\partial (w^2)^2}.$$
Denoting $F=\Delta^{\e}b$ and taking into account that $dw^1$ and
$dw^2$ are parallel, we have that
$$\nabla R=\frac{1}{2}dF\otimes(dw^1\wedge dw^2)\otimes(dw^1\wedge dw^2).$$
Recall that the recurrent formula \eqref{eq recursiva} together
with the Ricci-flatness of $g$ give that
$$\nabla R=4\theta \otimes R.$$
Comparing these two formulas for $\nabla R$ we have that
$$dF=4F\theta,$$
where $\theta$ can be written as
$$\theta=-\frac{1}{w^1}dw^1.$$
Note that by construction $w^1\neq 0$. The system of partial
differential equations is thus
\begin{align*}
\frac{\partial F}{\partial w^1} &= -\frac{4}{w_1}F\\
\frac{\partial F}{\partial w^2} &= 0,
\end{align*}
which has solution
$$F=\frac{R_0}{(w^1)^4},$$
for some constant $R_0\in\R$. We have thus proved

\begin{proposition}\label{proposition local 2}
Let $(M,g,J)$ be an $\e$-K\"ahler manifold of dimension $2n+4$,
$n\geq 0$, admitting a degenerate homogeneous $\e$-K\"ahler
structure of linear type $S$ with $\zeta=-\frac{\e}{2}\xi$. Then
each $p\in M$ has a neighborhood $e$-holomorphically isometric to
an open subset of $(\C^{\e})^{n+2}$ with the $\e$-K\"ahler metric
\begin{equation}\label{local metric 2}
g  = dw^1dz^1-\e dw^2dz^2+b(dw^1dw^1-\e
dw^2dw^2)+\sum_{a=1}^n\varepsilon^a(dx^adx^a-\e dy^ady^a),
\end{equation}
where $\varepsilon^a=\pm 1$, and the function $b$ only depends on
the coordinates $\{w^1,w^2\}$ and satisfies
\begin{equation*}\Delta^{\e}b=\frac{R_0}{(w^1)^4}\end{equation*} for
$R_0\in\R-\{0\}$.
\end{proposition}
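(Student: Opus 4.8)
The plan is to run exactly the argument already carried out in the preceding paragraphs of \S\ref{subsection local lambda=0.5}, organizing it into a clean sequence and supplying the routine verifications. First I would record the consequences of $\wnabla\xi=0$ together with the explicit form of $S$ when $\zeta=-\frac{\e}{2}\xi$: this gives $\nabla\xi=\theta\otimes\xi$, hence $\nabla\theta=\theta\otimes\theta$ and $\nabla(\theta\circ J)=\theta\otimes(\theta\circ J)$, so in particular $d\theta=0$ and locally $\theta=dv$. Setting $w_1=e^{-v}$ one checks $dw_1=-w_1\theta$ and then, since $d(dw_1\circ J)=0$, one obtains a function $w_2$ with $dw_2=\e\,dw_1\circ J$; assembling $w=w_1+i_{\e}w_2$ yields a nowhere-vanishing parallel $(1,0)$-form $dw$. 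The vector fields $Z=\mathrm{grad}(w_1)=-w_1\xi$ and $JZ=-w_1 J\xi$ are then parallel, hence commuting $\e$-holomorphic Killing fields.

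Next I would invoke Proposition \ref{proposition holonomy}: the holonomy fixes $E^{\bot}$ pointwise, so an orthonormal basis of $E^{\bot}$ at $p$ extends by parallel transport to parallel (hence commuting, $\e$-holomorphic, Killing) fields $\{X_a,JX_a\}$ on a neighborhood, and a one-line computation with $\nabla dw=0$ shows $dw(X_a)$ is constant equal to $0$, so the $X_a$ are tangent to the leaves $w^{-1}(\tau)$ and commute with $Z,JZ$. Thus $\{Z,JZ,X_a,JX_a\}$ is a commuting family of $\e$-holomorphic Killing fields, and after shrinking $\mathcal{U}$ I can introduce adapted $\e$-complex coordinates $\{w,z,z^a\}$ with $\partial_z=\tfrac12(Z+\e i_{\e}JZ)$ and $\partial_{z^a}=\tfrac12(X_a+\e i_{\e}JX_a)$. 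Writing everything in the real coordinates $\{z^1,z^2,w^1,w^2,x^a,y^a\}$, the pairings among the basis vectors (using $g(\xi,\xi)=g(\xi,J\xi)=0$, $g(\xi,q_1)=1$, $g(q_1,q_1)=b$, orthogonality of $E^{\bot}$, and the holonomy-invariance of $\mathrm{span}\{\partial_w,\partial_z\}$) force the stated matrix form of $g$ and of $J$; imposing that $\partial_{z^1},\partial_{z^2},\partial_{x^a},\partial_{y^a}$ are parallel shows $b=b(w^1,w^2)$. This produces metric \eqref{local metric 2}.

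Finally I would compute the curvature of \eqref{local metric 2} directly, obtaining $R=\tfrac12\Delta^{\e}b\,(dw^1\wedge dw^2)\otimes(dw^1\wedge dw^2)$ with $\Delta^{\e}=-\e\,\partial^2_{w^1}+\partial^2_{w^2}$; since $dw^1,dw^2$ are parallel, $\nabla R=\tfrac12\,d(\Delta^{\e}b)\otimes(dw^1\wedge dw^2)\otimes(dw^1\wedge dw^2)$. Comparing with the recursion $\nabla R=4\theta\otimes R$ from \eqref{eq recursiva} (valid because $g$ is Ricci-flat by Proposition \ref{proposition Ricci-flat}, so $f=0$) and using $\theta=-\tfrac{1}{w^1}dw^1$ gives the PDE system $\partial_{w^1}F=-\tfrac{4}{w^1}F$, $\partial_{w^2}F=0$ for $F=\Delta^{\e}b$, whose general solution is $F=R_0/(w^1)^4$; non-flatness forces $R_0\neq 0$. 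I expect the main obstacle to be the bookkeeping in the middle step — justifying rigorously that the adapted coordinates can be chosen so that $g$ and $J$ take precisely the displayed block forms, in particular that the only surviving metric coefficient beyond the constant pairings is the single function $b$ in the $\{w^1,w^2\}$ block — since this rests on combining the holonomy-invariant splitting $T M=\mathrm{span}\{\partial_w,\partial_z\}\oplus E^{\bot}$ with the parallelism of all the frame fields except the $\partial_{w^i}$; the curvature computation and the ODE at the end are then purely mechanical.
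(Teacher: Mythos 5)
Your proposal follows the paper's own derivation essentially verbatim: the parallel $(1,0)$-form $dw$ built from $\theta=dv$ and $w_1=e^{-v}$, the parallel commuting Killing fields $Z,JZ,X_a,JX_a$ obtained via Proposition \ref{proposition holonomy}, the adapted coordinates giving the block form of $g$ and $J$, and the final comparison of $\nabla R$ with the recursion $\nabla R=4\theta\otimes R$ using $\theta=-\frac{1}{w^1}dw^1$ to solve for $\Delta^{\e}b$. This is the same argument as in \S\ref{subsection local lambda=0.5}, correctly reproduced.
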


\subsection{$\lambda=0$}\label{subsection local lambda=0}

The case $\e=-1$ and $\lambda=0$ was studied in \cite{CL}, where
the local form and some properties of the metric $g$ and the
complex structure $J$ were obtained. We reproduce below the main
lines of the proof in \cite{CL}, now for both values of $\e$
simultaneously and putting special attention to the formulas which
depend on $\e$.

Substituting the value $\lambda=0$ in \eqref{e-Kahler structure}
we have that the homogeneous structure $S$ takes the form
$$S_XY=g(X,Y)\xi-g(\xi,Y)X+\e g(X,JY)J\xi-\e g(\xi,JY)JX.$$
In analogy with the complex case we shall call $S$
\textit{strongly degenerate}.
We consider the form $\alpha=\theta+i\e(\theta\circ J)$, which is
of type $(1,0)$ with respect to the $\e$-complex structure $J$
(see \cite{GO}). As a straightforward computation shows,
$\nabla\alpha=\alpha\otimes\alpha$ so that $d\alpha=0$. This
implies in particular that $\alpha$ is an $\e$-holomorphic
$1$-form. Fixing a point $p\in M$, by the closeness of $\alpha$
there is a neighborhood $\mathcal{U}$ of $p$ and an
$\e$-holomorphic function $v:\mathcal{U}\to \C^{\e}$ such that
$\alpha=dv$. We consider the $\e$-holomorphic function
$$w=e^{-v},$$ where the
the exponential must read $e^{x+i_{\e}y}=e^x(\cos y+i_{\e}\sin y)$
for $\e=-1$ and $e^{x+i_{\e}y}=e^x(\cosh y+i_{\e}\sinh y)$ for
$\e=1$. Differentiating we obtain that $\nabla dw=0$ so that
$dw$ is a nowhere vanishing parallel $\e$-holomorphic $1$-form on
$\mathcal{U}$. The function $w:\mathcal{U}\to\C^{\e}$ defines a
foliation of $\mathcal{U}$ by $\e$-complex hypersurfaces
$\mathcal{H}_{\tau}=w^{-1}(\tau)$, $\tau\in\C^{\e}$ (if
$w^{-1}(\tau)$ is non empty).

It is easy to adapt the construction made in \cite{CL} for both
values of $\e$ simultaneously in order to find a set of
coordinates $\{z^1,z^2,w^1,w^2,x^a,y^a\}$ with respect to which
the metric takes the form
$$g  = dw^1dz^1-\e dw^2dz^2+b(dw^1dw^1-\e
dw^2dw^2)+\sum_{a=1}^n\varepsilon^a(dx^adx^a-\e dy^ady^a),$$ and
$J$ is the standard $\e$-complex structure of $\R^{2n+4}$, where
$\varepsilon^a=\pm 1$ and the function $b$ only depends on the
coordinates $\{w^1,w^2\}$. In addition, as a simple computation
shows,
$$R=\frac{1}{2}\Delta^{\e}b(dw^1\wedge dw^2)\otimes(dw^1\wedge dw^2),$$
and
$$\theta=\frac{-1}{(w^1)^2-\e(w^2)^2}(w^1dw^1-\e w_2dw^2).$$
Finally, imposing the equation $\nabla R=4\theta\otimes R$ and
denoting $F=\Delta^{\e}b$ we obtain the system of partial
differential equations
\begin{align*}
\frac{\partial F}{\partial w^1} &= \frac{-4w^1}{(w^1)^2-\e(w^2)^2}F\\
\frac{\partial F}{\partial w^2} &= \frac{4\e
w^2}{(w^1)^2-\e(w^2)^2}F.
\end{align*}
which has solution
$$F=\frac{R_0}{((w^1)^2-\e(w^2)^2)^2},$$
for some constant $R_0\in \R$. Note that since $w=e^{-v}$ we
always have $(w^1)^2-\e(w^2)^2\neq 0$. We have thus proved


\begin{proposition}\label{proposition local 1}
Let $(M,g,J)$ be an $\e$-K\"ahler manifold of dimension $2n+4$,
$n\geq 0$, admitting a strongly degenerate homogeneous
$\e$-K\"ahler structure  of linear type $S$. Then each $p\in M$
has a neighborhood $\e$-holomorphically isometric to an open
subset of $(\C^{\e})^{n+2}$ with the $\e$-K\"ahler metric
\begin{equation}\label{local metric 1}
g  = dw^1dz^1-\e dw^2dz^2+b(dw^1dw^1-\e
dw^2dw^2)+\sum_{a=1}^n\varepsilon^a(dx^adx^a-\e dy^ady^a),
\end{equation}
where $\varepsilon^a=\pm 1$, and the function $b$ only depends on
the coordinates $\{w^1,w^2\}$ and satisfies
\begin{equation*}\Delta^{\e}b=\frac{R_0}{((w^1)^2-\e(w^2)^2)^2} \end{equation*} for
$R_0\in\R-\{0\}$.
\end{proposition}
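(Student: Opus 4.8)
The plan is to follow essentially the same route that was already carried out in the case $\lambda=-\tfrac{\e}{2}$ in \S\ref{subsection local lambda=0.5}, only replacing the $1$-form $\theta=dv$ by the type-$(1,0)$ form $\alpha=\theta+i_{\e}(\theta\circ J)$; the point is that when $\lambda=0$ the integrability formulas for $\theta$ alone are no longer as simple, but the \emph{$\e$-complexified} object $\alpha$ is parallel up to a quadratic term. Concretely, substituting $\lambda=0$ in \eqref{e-Kahler structure} and using $\wnabla\xi=0$ together with Proposition \ref{proposition Ricci-flat} one computes $\nabla\alpha=\alpha\otimes\alpha$, hence $d\alpha=0$, so locally $\alpha=dv$ for an $\e$-holomorphic $v\colon\mathcal U\to\C^{\e}$. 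Setting $w=e^{-v}$ and differentiating gives $\nabla dw=0$, i.e. $dw$ is a nowhere-vanishing parallel $\e$-holomorphic $1$-form, which plays the role that $w_1$ and $w_2$ jointly played in the previous subsection.

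Next I would reconstruct the adapted coordinate system exactly as in \cite{CL}: the holonomy computation of Proposition \ref{proposition holonomy} shows $E^{\bot}$ carries trivial holonomy, so parallel transport of an orthonormal basis produces commuting $\e$-holomorphic Killing fields $\{X_a,JX_a\}$ tangent to the foliation $\mathcal H_{\tau}=w^{-1}(\tau)$; together with the parallel fields obtained from $dw$ this yields $\e$-complex coordinates $\{w,z,z^a\}$, and writing $w=w^1+i_{\e}w^2$, $z=z^1+i_{\e}z^2$, $z^a=x^a+i_{\e}y^a$ one gets the metric in the stated block form \eqref{local metric 1} with a single unknown function $b$; imposing that the coordinate fields other than $\partial_w$ are parallel forces $b=b(w^1,w^2)$, and a direct curvature computation gives $R=\tfrac12\Delta^{\e}b\,(dw^1\wedge dw^2)\otimes(dw^1\wedge dw^2)$. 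I would simply assert these steps, referring to \cite{CL} for the complex case and noting that the para-complex case is identical after the substitution $\e=1$.

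Finally, to pin down $b$ I would use the recurrence $\nabla R=4\theta\otimes R$, valid here by \eqref{eq recursiva} and Ricci-flatness. Since $dw^1,dw^2$ are parallel, $\nabla R=\tfrac12 d(\Delta^{\e}b)\otimes(dw^1\wedge dw^2)\otimes(dw^1\wedge dw^2)$, so with $F=\Delta^{\e}b$ the recurrence reads $dF=4F\theta$. Because $\alpha=dv$ and $w=e^{-v}$, one computes $\theta=\mathrm{Re}_{\e}(\alpha)=\mathrm{Re}_{\e}(-dw/w)$, which in real coordinates is $\theta=\dfrac{-1}{(w^1)^2-\e(w^2)^2}\bigl(w^1dw^1-\e w^2dw^2\bigr)$; substituting gives precisely the first-order system
\begin{align*}
\frac{\partial F}{\partial w^1} &= \frac{-4w^1}{(w^1)^2-\e(w^2)^2}F, &
\frac{\partial F}{\partial w^2} &= \frac{4\e w^2}{(w^1)^2-\e(w^2)^2}F,
\end{align*}
whose general solution is $F=R_0\bigl((w^1)^2-\e(w^2)^2\bigr)^{-2}$ for a constant $R_0$; note $(w^1)^2-\e(w^2)^2\neq 0$ everywhere since $w=e^{-v}$ is $\e$-complex nonvanishing, and $R_0\neq 0$ because $M$ is assumed non-flat. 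This yields the metric \eqref{local metric 1} and the stated equation for $b$, completing the proof.

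I expect the only genuinely delicate point to be the coordinate construction — verifying that the parallel and Killing fields produced from $dw$ and from the trivial-holonomy summand $E^{\bot}$ can simultaneously be rectified to coordinates compatible with $J$ and putting $g$ in the displayed block form — but this is exactly the content imported from \cite{CL}, so in the write-up it is legitimate to compress it to a reference plus the remark that nothing changes when one carries the parameter $\e$ along. The verification $\nabla\alpha=\alpha\otimes\alpha$ and the final ODE solve are routine once $\theta$ is expressed through $w$.
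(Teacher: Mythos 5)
Your proposal follows essentially the same route as the paper's own treatment: introduce the $(1,0)$-form $\alpha=\theta+i_{\e}(\theta\circ J)$, show $\nabla\alpha=\alpha\otimes\alpha$ so that $\alpha=dv$ and $w=e^{-v}$ gives a parallel $\e$-holomorphic $1$-form, import the coordinate construction from \cite{CL} for both values of $\e$, and then solve $dF=4F\theta$ with $F=\Delta^{\e}b$ to get $F=R_0\bigl((w^1)^2-\e(w^2)^2\bigr)^{-2}$. The only cosmetic remark is that $\nabla\alpha=\alpha\otimes\alpha$ follows directly from $\wnabla\xi=0$ with $\lambda=0$ and does not need Ricci-flatness, which is only used later to obtain the recurrence $\nabla R=4\theta\otimes R$.
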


\subsection{The manifold $((\C^{\e})^{n+2},g)$}\label{section
manifolds}

Propositions \ref{proposition local 1} and \ref{proposition local
2} give the local forms \eqref{local metric 1} and \eqref{local
metric 2} of the metric of a manifold with a degenerate
homogeneous $\e$-K\"ahler structure of linear type. This motivates
the study of the space $(\C^{\e})^{2+n}$ endowed with this
particular $\e$-K\"ahler metric, which can thus be understood as
the simplest instance of this type of manifolds. In particular,
the goal of this section is to study the singular nature of this
spaces, and their analogies with homogeneous plane waves. We shall
restricts ourselves to the Lorentz $\e$-K\"ahler case, i.e.,
metrics of index $2$. Throughout this section $\| w \|_{\lambda}$
must be understood as
\begin{equation}
\label{norma}
\| w\|_{\lambda}^2=\left\{\begin{array}{l}w_1^2-\e
w_2^2\hspace{2em} \text{for} \quad \lambda=0,\\ w_1^2
\hspace{21mm} \text{for} \quad \lambda=-\e /2.\end{array}\right.
%
\end{equation}
In addition $\Delta^{\e}$ shall stand for the differential
operator
$$\Delta^{\e}=-\e\frac{\partial^2}{\partial w_1^2}+\frac{\partial^2}{\partial w_2^2}.$$

We thus consider the manifold $(\C^{\e})^{2+n}=(\R^{2n+4},J_0)$,
where $J_0$ is the standard $\e$-complex structure, with real
coordinates $\{z^1,z^2,w^1,w^2,x^a,y^a\}$, endowed with the metric
\begin{equation}\label{metric general form}
g = dw^1dz^1-\e dw^2dz^2+b(dw^1dw^1-\e
dw^2dw^2)+\sum_{a=1}^n(dx^adx^a-\e dy^ady^a),
\end{equation}
where $b$ is a function of the variables $(w^1,w^2)$ satisfying
\begin{equation}\label{differential equation b general form }
\Delta^{\e}b=\frac{R_0}{\| w\|_{\lambda}^4}, \qquad
R_0\in\R-\{0\}.
\end{equation}
As computed before the curvature $(1,3)$-tensor field of $g$ is
\begin{eqnarray*}
R &=&\frac{1}{2}\frac{R_0}{\| w\|_{\lambda}^4}\left((dw^1\wedge
dw^2)\otimes(dw^1\otimes \partial_{z^2})\right.\\
& & \hspace{27mm}\left.+\e(dw^1\wedge dw^2)\otimes(dw^2\otimes
\partial_{z^1})\right).
\end{eqnarray*}
As $R_0\neq 0$, it exhibits a singular behavior at
$$\mathcal{S}=\{\| w\|_{\lambda}=0\}.$$
This set can be understood as a singularity of $g$ in the
cosmological sense: the geodesic deviation equation is governed by
the components of the curvature tensor $R_{w^1w^2w^i}^{z^j}$,
$i,j=1,2$, making the tidal forces infinite at $\mathcal{S}$.
Indeed,
%
we can compute a component of the curvature tensor with respect to
an orthonormal parallel frame along a geodesic reaching the
singular set in finite time, and see that it is singular (see
\cite{Sen}). Let $\gamma$ be the geodesic with initial value
$\gamma(0)=(0,0,1,0,\ldots,0)$ and
$\dot{\gamma}=(0,0,-1,0,\ldots,0)$. It is easy to see that this
geodesic is of the form
$$\gamma(t)=(z^1(t),z^2(t),1-t,0,x^a(t),y^a(t))$$
for some functions $z^1(t),z^2(t),x^a(t),y^a(t)$, $a=1,\ldots,n$,
and reaches the singular set $\mathcal{S}$ at $t=1$. Let
$$E(t)=W^1(t)\partial_{w^1}+W^2(t)\partial_{w^2}+Z^1(t)\partial_{z^1}+Z^2(t)\partial_{z^2}+X^a(t)\partial_{x^a}+Y^a(t)\partial_{y^a}$$
be a vector field along $\gamma$. $E$ is parallel if the following
equations hold:
\begin{align*}
0=\dot{W}^1,& \qquad 0=\dot{W}^2,\\
0=\dot{Z}^1-W^1\Gamma_{w^1w^1}^{z^1}-W^2\Gamma_{w^1w^2}^{z^1},&
\qquad
0=\dot{Z}^2-W^1\Gamma_{w^1w^1}^{z^2}-W^2\Gamma_{w^1w^2}^{z^2},\\
0=\dot{X}^a,& \qquad 0=\dot{Y}^a.
\end{align*}
We can thus obtain an orthonormal parallel frame
$\{E_1(t),\ldots,E_{4+2n}(t)\}$ with $E_1(t)$ and $E_2(t)$ of the
form
\begin{align*}
E_1(t)=& \frac{1}{\sqrt{|b(0)|}}\partial_{w^1}+Z_1^1(t)\partial_{z^1}+Z_1^2(t)\partial_{z^2}+X_1^a\partial_{x^a}+Y_1^a\partial_{y^a},\\
E_2(t)=&
\frac{1}{\sqrt{|b(0)|}}\partial_{w^2}+Z_2^1(t)\partial_{z^1}+Z_2^2(t)\partial_{z^2}+X_2^a\partial_{x^a}+Y_2^a\partial_{y^a},
\end{align*}
where $E_1(0)=\frac{1}{\sqrt{|b(0)|}}\partial_{w^1}$,
$E_2(0)=\frac{1}{\sqrt{|b(0)|}}\partial_{w^2}$, and $b(0)=b(0,0)$.
The value of the curvature tensor applied to $E_1(t),E_2(t)$ is
$$R_{E_1(t)E_2(t)E_1(t)E_2(t)}=\frac{R_0}{2b(0)^2}\frac{1}{||w(t)||_\lambda ^4}=\frac{R_0}{2b(0)^2}\frac{1}{(1-t)^4},$$
which is singular at $t=1$.

\bigskip

\begin{center}
\begin{tabular}{|c|c|c|}
\hline
Singular set $\mathcal{S}$ & $\lambda=0$ & $\lambda=-\frac{\e}{2}$\\[0.5ex]
\hline $\e=-1$ & & \\ &
\def\svgwidth{3cm}
\begingroup%
  \makeatletter%
  \providecommand\color[2][]{%
    \errmessage{(Inkscape) Color is used for the text in Inkscape, but the package 'color.sty' is not loaded}%
    \renewcommand\color[2][]{}%
  }%
  \providecommand\transparent[1]{%
    \errmessage{(Inkscape) Transparency is used (non-zero) for the text in Inkscape, but the package 'transparent.sty' is not loaded}%
    \renewcommand\transparent[1]{}%
  }%
  \providecommand\rotatebox[2]{#2}%
  \ifx\svgwidth\undefined%
    \setlength{\unitlength}{370.028125bp}%
    \ifx\svgscale\undefined%
      \relax%
    \else%
      \setlength{\unitlength}{\unitlength * \real{\svgscale}}%
    \fi%
  \else%
    \setlength{\unitlength}{\svgwidth}%
  \fi%
  \global\let\svgwidth\undefined%
  \global\let\svgscale\undefined%
  \makeatother%
  \begin{picture}(1,0.95127904)%
    \put(0,0){\includegraphics[width=\unitlength]{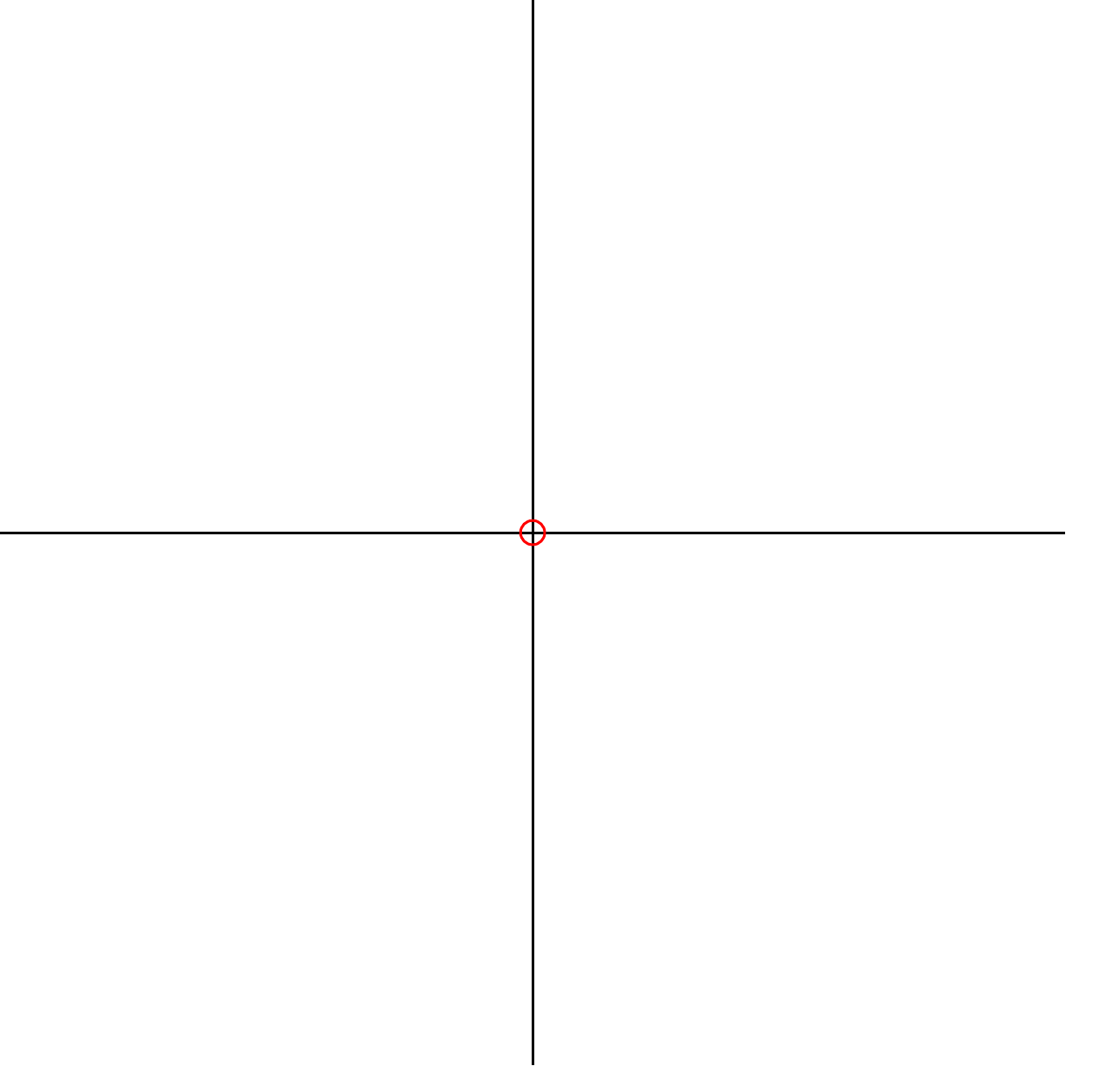}}%
    \put(0.85398914,0.45188526){\color[rgb]{0,0,0}\makebox(0,0)[lt]{\begin{minipage}{0.46699153\unitlength}\raggedright $w^1$\end{minipage}}}%
    \put(0.33294766,0.95130675){\color[rgb]{0,0,0}\makebox(0,0)[lt]{\begin{minipage}{0.25295375\unitlength}\raggedright $w^2$\end{minipage}}}%
  \end{picture}%
\endgroup%
 &
\def\svgwidth{3cm}
\begingroup%
  \makeatletter%
  \providecommand\color[2][]{%
    \errmessage{(Inkscape) Color is used for the text in Inkscape, but the package 'color.sty' is not loaded}%
    \renewcommand\color[2][]{}%
  }%
  \providecommand\transparent[1]{%
    \errmessage{(Inkscape) Transparency is used (non-zero) for the text in Inkscape, but the package 'transparent.sty' is not loaded}%
    \renewcommand\transparent[1]{}%
  }%
  \providecommand\rotatebox[2]{#2}%
  \ifx\svgwidth\undefined%
    \setlength{\unitlength}{370.028125bp}%
    \ifx\svgscale\undefined%
      \relax%
    \else%
      \setlength{\unitlength}{\unitlength * \real{\svgscale}}%
    \fi%
  \else%
    \setlength{\unitlength}{\svgwidth}%
  \fi%
  \global\let\svgwidth\undefined%
  \global\let\svgscale\undefined%
  \makeatother%
  \begin{picture}(1,0.95127904)%
    \put(0,0){\includegraphics[width=\unitlength]{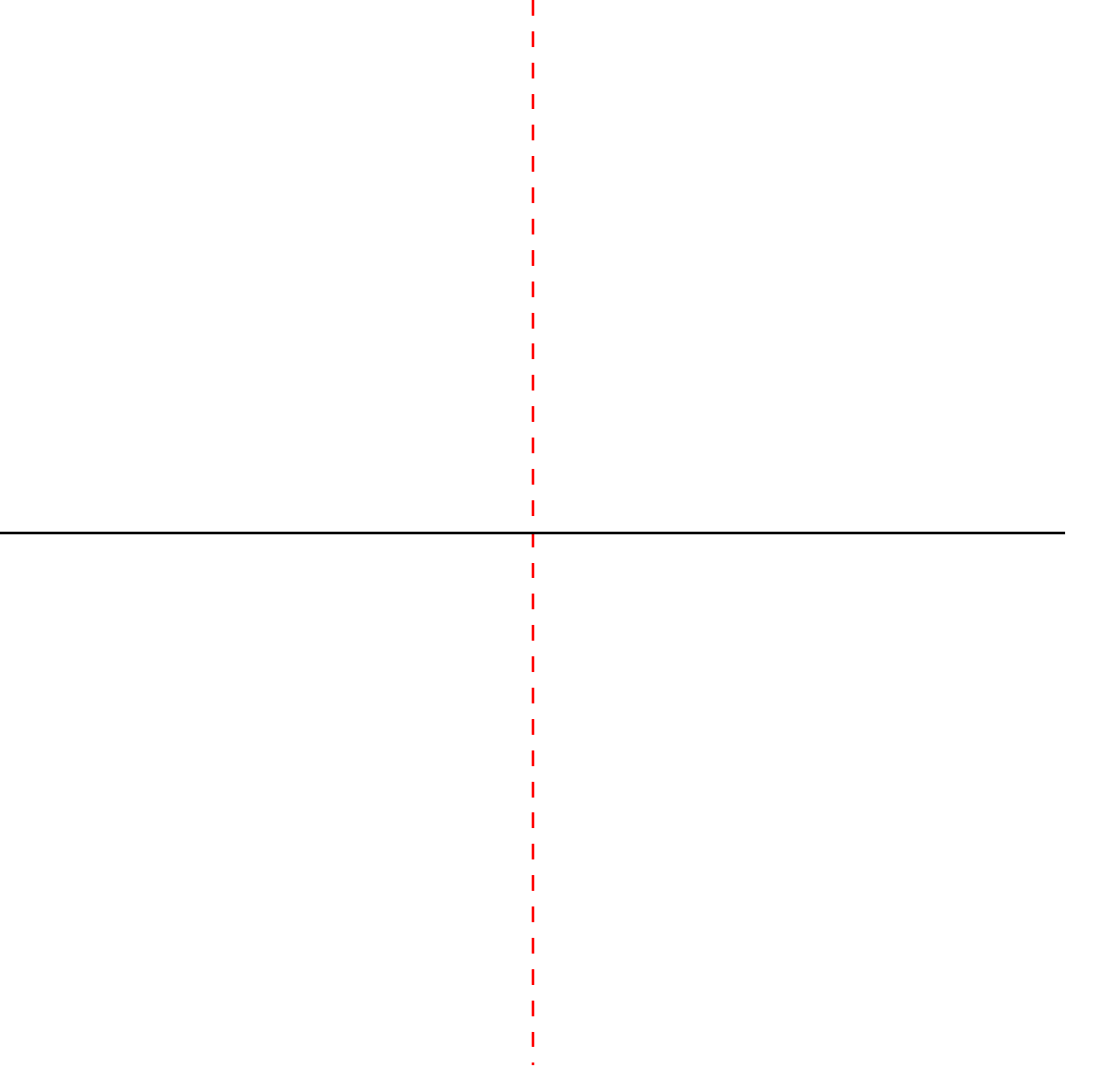}}%
    \put(0.85398914,0.45188526){\color[rgb]{0,0,0}\makebox(0,0)[lt]{\begin{minipage}{0.46699153\unitlength}\raggedright $w^1$\end{minipage}}}%
    \put(0.34808165,0.95130675){\color[rgb]{0,0,0}\makebox(0,0)[lt]{\begin{minipage}{0.25295375\unitlength}\raggedright $w^2$\end{minipage}}}%
  \end{picture}%
\endgroup%
\\[0.5ex]
 &  \footnotesize{$\mathcal{S}: (w^1)^2+(w^2)^2=0$} &  \footnotesize{$\mathcal{S}: w^1=0$}\\[0.5ex]
\hline $\e=1$ & & \\ &
\def\svgwidth{3cm}
\begingroup%
  \makeatletter%
  \providecommand\color[2][]{%
    \errmessage{(Inkscape) Color is used for the text in Inkscape, but the package 'color.sty' is not loaded}%
    \renewcommand\color[2][]{}%
  }%
  \providecommand\transparent[1]{%
    \errmessage{(Inkscape) Transparency is used (non-zero) for the text in Inkscape, but the package 'transparent.sty' is not loaded}%
    \renewcommand\transparent[1]{}%
  }%
  \providecommand\rotatebox[2]{#2}%
  \ifx\svgwidth\undefined%
    \setlength{\unitlength}{370.303125bp}%
    \ifx\svgscale\undefined%
      \relax%
    \else%
      \setlength{\unitlength}{\unitlength * \real{\svgscale}}%
    \fi%
  \else%
    \setlength{\unitlength}{\svgwidth}%
  \fi%
  \global\let\svgwidth\undefined%
  \global\let\svgscale\undefined%
  \makeatother%
  \begin{picture}(1,0.95138274)%
    \put(0,0){\includegraphics[width=\unitlength]{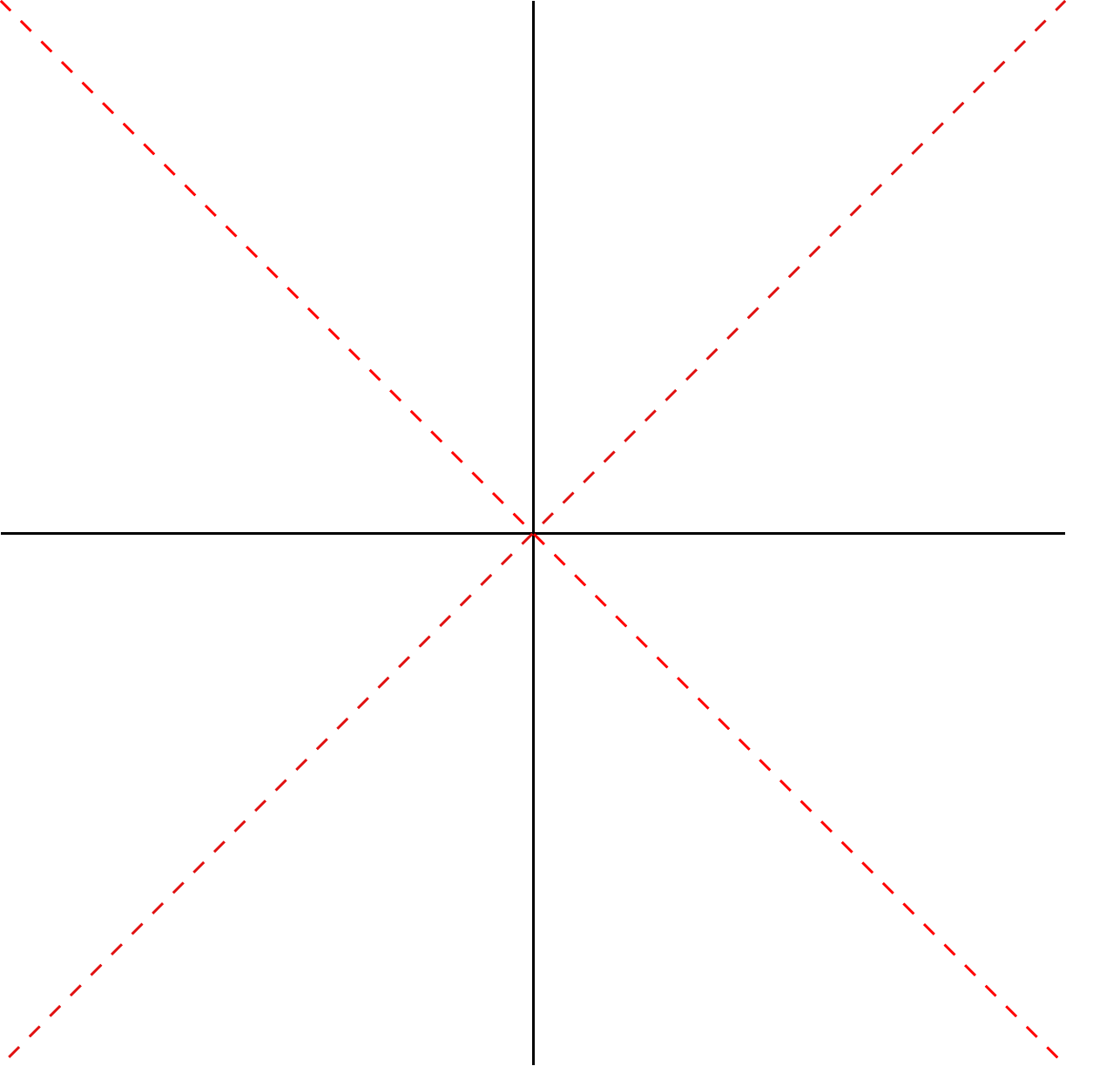}}%
    \put(0.85409757,0.45154967){\color[rgb]{0,0,0}\makebox(0,0)[lt]{\begin{minipage}{0.46664473\unitlength}\raggedright $w^1$\end{minipage}}}%
    \put(0.34856579,0.95060028){\color[rgb]{0,0,0}\makebox(0,0)[lt]{\begin{minipage}{0.25276589\unitlength}\raggedright $w^2$\end{minipage}}}%
  \end{picture}%
\endgroup%
 &
\def\svgwidth{3cm}\\
 &  \footnotesize{$\mathcal{S}: (w^1)^2-(w^2)^2=0$} &  \footnotesize{$\mathcal{S}: w^1=0$}\\[0.5ex]
\hline
\end{tabular}
\end{center}

Note that $(\C^{\e})^{2+n}-\mathcal{S}$ is connected and not
simply-connected for $\e=-1$ and $\lambda=0$ while it is not
connected nor simply-connected for the other values. Moreover,
$(\C^{\e})^{2+n}-\mathcal{S}$ has two connected components for
$\lambda=-\e/2$ and $\e=\pm 1$ and four connected components for
$\lambda=0$ and $\e=1$.

\bigskip

Finally we show that  degenerate homogeneous $\e$-K\"ahler
structures of linear type indeed exist and are realized in the
manifold $((\C^{\e})^{2+n}-\mathcal{S},g)$.
\begin{proposition}
For every data $(b,R_0)$ satisfying \eqref{differential equation b
general form }, the $\e$-K\"ahler manifold
$((\C^{\e})^{2+n}-\mathcal{S},g)$ admits a strongly degenerate
pseudo-K\"ahler homogeneous structure of linear type.
\end{proposition}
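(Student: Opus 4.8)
The plan is to write the structure down explicitly on $(\C^{\e})^{2+n}-\mathcal{S}$ and then apply Proposition \ref{proposition 2}: it suffices to produce vector fields $\xi,\zeta$ for which the tensor $S$ of \eqref{e-Kahler structure} satisfies $\wnabla\xi=0$, $\wnabla\zeta=0$ and $\wnabla R=0$ with $\wnabla=\nabla-S$, since such an $S$ is automatically a homogeneous $\e$-K\"ahler structure, and of linear type by its very form. Guided by the formulas for $\theta$ obtained in Sections \ref{subsection local lambda=0.5} and \ref{subsection local lambda=0}, I would set
$$\theta=-\tfrac12\,d\log\|w\|_{\lambda}^{2},\qquad \xi=\theta^{\sharp},\qquad \zeta=\lambda\xi,$$
with $\lambda=0$ or $\lambda=-\e/2$ according to which of the two norms in \eqref{norma} the given $b$ satisfies. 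These are smooth on all of $(\C^{\e})^{2+n}-\mathcal{S}$ exactly because $\|w\|_{\lambda}$ is nowhere zero there, and the explicit inverse of \eqref{metric general form} shows that $\xi$ has components only along $\partial_{z^1},\partial_{z^2}$, so $g(\xi,\xi)=\theta(\xi)=0$; thus the resulting $S$ is degenerate, and strongly degenerate precisely when $\lambda=0$.

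The verification rests on the two differential identities
$$\nabla\theta=\theta\otimes\theta+(2\lambda+\e)(\theta\circ J)\otimes(\theta\circ J),\qquad \nabla(\theta\circ J)=\theta\otimes(\theta\circ J)+(1+2\e\lambda)(\theta\circ J)\otimes\theta,$$
which follow from a short computation of the Christoffel symbols of \eqref{metric general form} (short since $b=b(w^1,w^2)$ and the only nonzero symbols point along $\partial_{z^1},\partial_{z^2}$) and are recorded, in the two particular cases, in Sections \ref{subsection local lambda=0.5}, \ref{subsection local lambda=0} and in the proof of Proposition \ref{proposition holonomy}. On the other hand, evaluating the right-hand side of \eqref{e-Kahler structure} and using $g(\xi,\xi)=g(J\xi,\xi)=0$ together with $\zeta=\lambda\xi$, one finds that $(S\cdot\theta)(X,Y)=-g(S_XY,\xi)$ and $(S\cdot(\theta\circ J))(X,Y)=-\theta(JS_XY)$ coincide term by term with the right-hand sides above. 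Hence $\wnabla\theta=0$ and $\wnabla(\theta\circ J)=0$; in particular $\wnabla\xi=0$, and since $\lambda$ is constant $\wnabla\zeta=\lambda\wnabla\xi=0$.

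It remains to check $\wnabla R=0$. Using the $J$ normal form of \eqref{metric general form} one verifies that $\Phi:=\theta\wedge(\theta\circ J)$ is a nowhere-zero scalar multiple of $dw^1\wedge dw^2$, and the equation $\Delta^{\e}b=R_0\|w\|_{\lambda}^{-4}$ turns the explicit formula $R=\tfrac12(\Delta^{\e}b)(dw^1\wedge dw^2)\otimes(dw^1\wedge dw^2)$ into
$$R=k\,\Phi\otimes\Phi,$$
the curvature form predicted by Proposition \ref{proposition holonomy}, where now $k$ is a constant proportional to $R_0$ (the differential equation for $b$ exactly compensating the conformal factor in $\Phi$). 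The two identities above give at once $\nabla\Phi=2\theta\otimes\Phi$ and $S\cdot\Phi=2\theta\otimes\Phi$ — all cross terms drop because $\theta\wedge\theta=(\theta\circ J)\wedge(\theta\circ J)=0$ — so $\wnabla\Phi=0$, and since $k$ is constant, $\wnabla R=\wnabla(k\,\Phi\otimes\Phi)=0$. Proposition \ref{proposition 2} then yields the claim.

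The step I expect to be most delicate is this last one: rewriting the coordinate curvature as $k\,\Phi\otimes\Phi$ with $k$ constant and checking that the derivation action of $S$ reproduces it. This is the reverse of the ``quite long computation'' leading to \eqref{eq recursiva}, and keeping track of the $\e$-K\"ahler curvature symmetries is what makes it tedious; once one records how $S$ acts on the $1$-forms $\theta$ and $\theta\circ J$ and uses that all self-wedges vanish, the computation closes. The remaining points — smoothness and global well-definedness of $\xi$ and $\zeta$ on the possibly disconnected set $(\C^{\e})^{2+n}-\mathcal{S}$, and the coordinate computations of $\nabla\theta$, $\nabla(\theta\circ J)$ and $R$ — are routine.
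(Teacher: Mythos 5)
Your proposal is correct and follows essentially the same route as the paper: define $\xi$ explicitly (your $\theta^{\sharp}$ with $\theta=-\tfrac12 d\log\|w\|_{\lambda}^2$ reproduces exactly the paper's coordinate vector fields for $\lambda=0$ and $\lambda=-\e/2$), set $\zeta=\lambda\xi$, form $S$ by \eqref{e-Kahler structure}, and verify $\wnabla\xi=0$, $\wnabla\zeta=0$, $\wnabla R=0$ so that Proposition \ref{proposition 2} applies. The only difference is that the paper compresses the verification into ``a straightforward computation shows,'' whereas you carry it out via $\nabla\theta$, $\nabla(\theta\circ J)$ and $R=k\,\theta\wedge(\theta\circ J)\otimes\theta\wedge(\theta\circ J)$ with $k$ constant, which is a correct way to do it.
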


\begin{proof}
Let
$$\xi=\left\{\begin{array}{l}
\frac{-1}{(w^1)^2-\e(w^2)^2}(w^1\partial_{z^1}+w^2\partial_{z^2})\qquad
\lambda=0,\\
  \\
-\frac{1}{w^1}\partial_{z^1}\qquad \lambda=-\frac{\e}{2}.
\end{array}\right.$$
We take the tensor field
\begin{align*}
S_{X}Y &=  g(X,Y)\xi-g(\xi,Y)X+\e g(X,JY)J\xi-\e
g(\xi,JY)JX\nonumber\\ & - 2\lambda g(\xi,JX)JY.
\end{align*}
A straightforward computation shows that $\wnabla \xi=0$ and
$\wnabla R=0$, where $\wnabla=\nabla-S$, so that $S$ satisfies
equations \eqref{AS+J}.
\end{proof}

\section{The homogeneous model for a degenerate homogeneous $\e$-K\"ahler structure of linear
type}

Let $(M,g,J)$ be an $\e$-K\"ahler manifold admitting a degenerate
homogeneous structure of linear type $S$. From \cite{TV} one can
construct a Lie algebra of infitesimal isometries associated to
$S$. This algebra is (fixing a point $p\in M$ as the origin)
$$\f{g}=T_pM\oplus \f{hol}^{\widetilde{\nabla}}$$
where $\widetilde{\nabla}=\nabla-S$ is the canonical connection
associated to the homogeneous structure tensor $S$. The brackets
in $\f{g}$ are
$$\left\{\begin{array}{lcll}
\left[A,B\right] & = & AB-BA, & A,B\in \f{hol}^{\widetilde{\nabla}}\\
\left[A,\eta\right] & = & A\cdot\eta, &
A\in \f{hol}^{\widetilde{\nabla}},\eta\in T_pM\\
\left[\eta,\zeta\right] & = &
S_{\eta}\zeta-S_{\zeta}\eta-\widetilde{R}_{\eta\zeta}, &
\eta,\zeta\in T_pM,
\end{array}\right.$$
 where $\widetilde{R}$ is the
curvature tensor of $\widetilde{\nabla}$. This curvature tensor
can be computed as $\widetilde{R}=R-R^S$ where
$$R^S_{XY}Z=\left[S_X,S_Y\right]Z-S_{S_XY-S_YX}Z.$$
Let $G$ be a Lie group with Lie algebra $\f{g}$, and let $H$ be
the connected Lie subgroup with Lie algebra
$\f{hol}^{\widetilde{\nabla}}$. If $H$ is closed in $G$, then the
infinitesimal model $(\f{g},\f{hol}^{\widetilde{\nabla}})$ is
called \textit{regular}, and we shall call $G/H$ the
\textit{homogeneous model} for $M$ associated to $S$, which means
that $(M,g,J)$ is locally $\e$-holomorphically isometric to $G/H$
with the $G$-invariant metric and $\e$-complex structure given by
$g$ and $J$ at $T_pM$.

In \cite{CL} the infinitesimal model for $\e=-1$ and $\lambda=0$
is computed, proving that it is regular and the corresponding
homogeneous model is not complete. We shall obtain the same result
for the rest of the values of $\e$ and $\lambda$.

\subsection{The case $\lambda=-\frac{\e}{2}$}

Denoting $p_1=\xi$ and $p_2=J\xi$, for the sake of simplicity we
choose $p\in M$ be such that with respect to the basis
$\{p_1,p_2,q_1,q_2,X_a,JX_a\}$ and its dual
$\{p^1,p^2,q^1,q^2,X^a,JX^a\}$ the curvature is written
$$R_p=R_0q^1\wedge q^2\otimes(q^1\otimes p_2+\e q^2\otimes p_1).$$
Substituting $\lambda=-\e/2$ in \eqref{e-Kahler structure} we
obtain by direct calculation that the non-vanishing terms of
$\widetilde{R}$ are:
$$\begin{array}{rrcl}
\widetilde{R}_{p_2q_1}: & q_1 & \mapsto & 2p_2\\
                        & q_2 & \mapsto & 2\e p_1\\
                        & X_a & \mapsto & 0\\
                        & JX_a & \mapsto & 0\\
                        & p_1,p_2 & \mapsto & 0
\end{array}\hspace{1em}
\begin{array}{rrcl}
\widetilde{R}_{q_1q_2}: & q_1 & \mapsto & (R_0-b(p))p_2\\
                        & q_2 & \mapsto & (R_0-b(p))\e p_1\\
                        & X_a & \mapsto & -JX_a\\
                        & JX_a & \mapsto & -\e X_a\\
                        & p_1,p_2 & \mapsto & 0
\end{array}
$$
$$\begin{array}{rrcl}
\widetilde{R}_{q_2X_a}: & q_1 & \mapsto & -JX_a\\
                        & q_2 & \mapsto & -\e X_a\\
                        & X_a & \mapsto & p_2\\
                        & JX_a & \mapsto & \e p_1\\
                        & p_1,p_2 & \mapsto & 0
\end{array}\hspace{1em}
\begin{array}{rrcl}
\widetilde{R}_{q_2JX_a}:& q_1 & \mapsto & -\e X_a\\
                        & q_2 & \mapsto & -\e JX_a\\
                        & X_a & \mapsto & \e p_1\\
                        & JX_a & \mapsto & \e p_2\\
                        & p_1,p_2 & \mapsto & 0
\end{array}
$$
$$\begin{array}{rrcl}
\widetilde{R}_{X_aJX_a}: & q_1 & \mapsto & -2p_2\\
                        & q_2 & \mapsto & -2\e p_1\\
                        & X_a & \mapsto & 0\\
                        & JX_a & \mapsto & 0\\
                        & p_1,p_2 & \mapsto & 0,
\end{array}$$
so that $\mathrm{dim}(\f{hol}^{\widetilde{\nabla}})=2n+2$.
Choosing endomorphisms
$$A=2(q^1\otimes p_2+\e q^2\otimes p_1), \hspace{1em}B_a=\widetilde{R}_{q_2X_a},\hspace{1em} C_a=\widetilde{R}_{q_2JX_a},$$
$$K=\frac{1}{2}(R_0-b(p))A-\sum_a(X^a\otimes JX_a+\e JX^a\otimes X_a)$$
as basis of $\f{hol}^{\widetilde{\nabla}}$, the Lie algebra
$\f{g}$ has non-vanishing brackets
\begin{equation*}
\begin{array}{l}
\left[B_a,C_a\right]=\e
A,\hspace{1em}\left[B_a,K\right]=-C_a,\hspace{1em}\left[C_a,K\right]=-\e
B_a,\\
\left[A,q_1\right]=2p_2,\hspace{1em}\left[A,q_2\right]=2\e
p_1,\\
\left[B_a,q_1\right]=-JX_a,\hspace{1em}\left[B_a,q_2\right]=-\e
X_a,\hspace{1em}\left[B_a,X_a\right]=-p_2,\hspace{1em}\left[B_a,JX_a\right]=-\e
p_1,\\
\left[C_a,q_1\right]=-\e X_a,\hspace{1em}\left[C_a,q_2\right]=-\e
y_a,\hspace{1em}\left[C_a,X_a\right]=\e
p_1,\hspace{1em}\left[C_a,JX_a\right]=\e p_2,\\
\left[K,X_a\right]=JX_a,\hspace{1em}\left[K,JX_a\right]=\e X_a,\\
\left[p_1,q_1\right]=-p_1,\hspace{1em}\left[p_2,q_1\right]=-3p_2-A,\hspace{1em}\left[p_2,q_2\right]=-2\e
p_1,\\
\left[q_1,q_2\right]=2b(p)p_2-q_2-\frac{1}{2}(R_0-b(p))A+K,\\
\left[q_1,X_a\right]=X_a,\hspace{1em}\left[q_1,JX_a\right]=JX_a,\\
\left[q_2,X_a\right]=2JX_a-B_a,\hspace{1em}\left[q_2,JX_a\right]2\e
X_a-C_a,\\
\left[X_a,JX_a\right]=2p_2+A.
\end{array}\end{equation*}
One can check that $\f{g}$ is a solvable Lie algebra with a
$3$-step nilradical
$\f{n}=\mathrm{span}\left\{p_1,p_2,q_2-K,X_a,JX_a,A,B_a,C_a,a=1,\ldots,n\right\}$.
Since $\f{g}$ has trivial center, the adjoint representation is
faithful and provides a matrix realization of $\f{g}$. With
respect to this realization it is a straightforward computation
that by exponentiation of $\f{g}$ and $\f{hol}^{\wnabla}$ we
obtain a Lie group $G$ and a closed Lie subgroup $H$ respectively,
so that the infinitesimal model $(\f{g},\f{hol}^{\wnabla})$ is
regular and $G/H$ is a homogeneous model for $(M,g,J)$. Let
$\bar{g}$ and $\bar{J}$ be the $G$-invariant metric and complex
structure on $G/H$ induced from $(M,g,J)$ on $G/H$.

\begin{proposition}\label{proposition completeness}
The homogeneous model $(G/H,\bar{g},\bar{J})$ is not geodesically
complete.
\end{proposition}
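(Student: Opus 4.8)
The plan is to exhibit an explicit incomplete geodesic in the homogeneous model by exploiting the local description already obtained in Proposition \ref{proposition local 2}. Since the infinitesimal model $(\f{g},\f{hol}^{\wnabla})$ is regular, the homogeneous model $(G/H,\bar g,\bar J)$ is locally $\e$-holomorphically isometric, around the base point, to an open subset of $((\C^{\e})^{n+2},g)$ with the metric \eqref{local metric 2} and with $b$ satisfying $\Delta^{\e}b=R_0/(w^1)^4$. Thus it suffices to produce a geodesic of this local metric that reaches the singular set $\mathcal{S}=\{w^1=0\}$ in finite affine parameter, because the curvature blows up there (as was shown via the parallel frame computation $R_{E_1E_2E_1E_2}=R_0/(2b(0)^2(1-t)^4)$), so the geodesic cannot be extended past that parameter value inside $G/H$ either; a geodesically complete Lorentzian manifold has bounded curvature invariants along any finite geodesic segment in a parallel frame, contradicting the blow-up.

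First I would write down the geodesic equations for \eqref{local metric 2}. Because the vector fields $\partial_{z^1},\partial_{z^2},\partial_{x^a},\partial_{y^a}$ are parallel (equivalently, Killing with parallel flows) and $b$ depends only on $(w^1,w^2)$, the coordinates $w^1,w^2$ decouple: one checks that $\Gamma^{w^i}_{\,\cdot\,\cdot}=0$ except possibly through $b$, and in fact the only nonzero Christoffel symbols are $\Gamma^{z^j}_{w^iw^k}$ built from derivatives of $b$. Hence for a geodesic $\gamma(t)=(z^1(t),z^2(t),w^1(t),w^2(t),x^a(t),y^a(t))$ the equations for $w^1,w^2,x^a,y^a$ read $\ddot w^1=0$, $\ddot w^2=0$, $\ddot x^a=0$, $\ddot y^a=0$, while $z^1,z^2$ are then determined by a linear second-order ODE driven by $b$ and its first derivatives evaluated along the (affine) curve $(w^1(t),w^2(t))$. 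Choosing initial data $w^1(0)=1$, $\dot w^1(0)=-1$, $w^2(0)=\dot w^2(0)=0$, $\dot x^a(0)=\dot y^a(0)=0$ (mimicking the geodesic $\gamma$ used in Section \ref{section manifolds}), we get $w^1(t)=1-t$, $w^2\equiv 0$, so the curve reaches $\mathcal{S}$ at $t=1$; one must only check that the remaining equations for $z^1,z^2$ admit a smooth solution on $[0,1)$, which they do since $b$ and $\partial b$ are smooth on $\{w^1>0\}$ and the equations are linear in $(\dot z^1,\dot z^2)$ with smooth coefficients there.

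Then I would show this geodesic is inextensible at $t=1$. The clean argument is the curvature one: transport an orthonormal frame along $\gamma$ as in Section \ref{section manifolds} (the parallel transport equations are explicitly solvable because $\dot W^1,\dot W^2$ vanish), obtaining parallel $E_1(t),E_2(t)$ with $R_{E_1(t)E_2(t)E_1(t)E_2(t)}=R_0/(2b(0)^2(1-t)^4)$, which diverges as $t\to 1^-$. If $\gamma$ extended to a geodesic on $[0,1+\varepsilon)$ in a complete $(G/H,\bar g)$, the parallel frame would extend continuously to $t=1$ and this curvature component would be finite there, a contradiction; hence $(G/H,\bar g,\bar J)$ is not geodesically complete. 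The main obstacle is bookkeeping: verifying that the only obstruction to extending $\gamma$ is genuinely the curvature blow-up and not merely a coordinate artifact — i.e. confirming that the local isometry image of $G/H$ actually contains the whole segment $\gamma([0,1))$ and that $\mathcal{S}$ is not covered by $G/H$. This follows because $G$ acts by isometries and the curvature is an invariant, so no point of $\mathcal{S}$, where $\|R\|$-type data degenerate in the precise sense above, can lie in the homogeneous (hence curvature-homogeneous) space unless $R_0=0$, which is excluded. Alternatively one may bypass the frame computation entirely and argue directly on $G/H$: the $1$-parameter subgroup generated by a suitable null element of $\f g$ (the image of $\dot\gamma(0)\in T_pM$) has a geodesic orbit whose maximal interval of definition is finite because the reparametrization to $\e$-holomorphic coordinates sends it to $w^1=1-t$, and $w^1=0$ is not in the coordinate patch.
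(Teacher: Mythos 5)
Your overall strategy (transfer the incompleteness of the coordinate model of Proposition \ref{proposition local 2} to $G/H$ via the geodesic hitting $\mathcal{S}=\{w^1=0\}$ and the parallel-frame curvature blow-up) is genuinely different from the paper's proof, but as written it has a real gap at exactly the step you flag as ``bookkeeping''. The regularity of the infinitesimal model only gives an $\e$-holomorphic isometry between \emph{some} neighbourhood of the base point of $G/H$ and \emph{some} neighbourhood of the corresponding point in the local model; it does not give you that the whole segment $\gamma([0,1))$ of the coordinate-model geodesic is isometrically realized inside $G/H$. Your computation of $R_{E_1(t)E_2(t)E_1(t)E_2(t)}=R_0/(2b(0)^2(1-t)^4)$ takes place in the coordinate model, and to contradict completeness of $G/H$ you must know that the $G/H$-geodesic with the corresponding initial data keeps matching $\gamma$ (through a continued local isometry) for all $t<1$. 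The patch you offer does not do this: the argument ``$G$ acts by isometries and the curvature is an invariant, so no point where the curvature degenerates can lie in the homogeneous space'' is vacuous here, because these metrics are VSI --- every scalar polynomial curvature invariant vanishes identically --- so there is no invariant scalar that blows up near $\mathcal{S}$; the divergence is only visible in a parallel frame along a specific geodesic, which is precisely the object whose existence in $G/H$ you have not yet established (and $\mathcal{S}$ is not part of either manifold, so excluding it from $G/H$ decides nothing). A correct repair would assume completeness of $G/H$ and invoke the continuation theorem for affine/isometric maps between real-analytic manifolds with the complete target (Kobayashi--Nomizu, Ch.~VI) to extend the local isometry along $\gamma$; but then you must also justify analyticity of the local model metric, which is automatic for $\e=-1$ (elliptic regularity for $\Delta b$) but not for $\e=1$, where $\Delta^{\e}$ is hyperbolic. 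Your closing ``alternative'' via a one-parameter subgroup is not a proof either: orbits of one-parameter groups are defined for all group-parameter values, and the claim that the affine parameter is finite again leans on the coordinate picture being valid along the whole orbit, which is the same unproved point.

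For comparison, the paper argues entirely intrinsically on $\f{g}$: it constructs two commuting layers of involutions, $\sigma$ on $\f{g}$ and $\theta$ on $\f{g}^{\sigma}$, whose fixed-point sets give closed totally geodesic submanifolds, ending with a two-dimensional solvable subgroup $K$ with Lie algebra $\mathrm{span}\{p_1,q_1\}$ carrying an explicit left-invariant metric ($\langle U,U\rangle=s$, $\langle V,V\rangle=-s$, $[U,V]=\frac{1}{\sqrt{|b(p)|}}(V-U)$) that is checked directly to be geodesically incomplete; incompleteness of $G/H$ then follows from the chain of totally geodesic inclusions, with no need to control how far the local isometry to the coordinate model extends. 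If you want to keep your route, you must supply the continuation-of-isometries argument (and address analyticity for $\e=1$); otherwise the totally geodesic subgroup argument is the safer path.
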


\begin{proof}
Let $\sigma$ be the Lie algebra involution of $\f{g}$ given by
$$\begin{array}{rrcl}
\sigma: & \f{g} & \to & \f{g}\\
        & A & \mapsto & -A\\
        & B_a & \mapsto & -B_a\\
        & C_a & \mapsto & C_a\\
        & K   & \mapsto & -K\\
        & p_1 & \mapsto & p_1\\
        & p_2 & \mapsto & -p_2\\
        & q_1 & \mapsto & q_1\\
        & q_2 & \mapsto & -q_2\\
        & X_a & \mapsto & X_a\\
        & JX_a & \mapsto & -JX_a.
\end{array}$$
One can check that the restriction of $\sigma$ to $\f{m}$ is an
isometry with respect to the bilinear form given by $\bar{g}$. The
subalgebra of fixed points is
$$\f{g}^{\sigma}=\mathrm{span}\left\{p_1,q_1,X_a,C_a,a=1,\ldots,n\right\}.$$
Working with the universal cover if necessary we can assume that
$G$ is simply-connected so that $\sigma$ induces an involution in
$G$ and therefore an isometric involution in $G/H$. We will denote
all this involutions by $\sigma$. Let $G^{\sigma}$ be the
connected Lie subgroup of $G$ with Lie algebra $\f{g}^{\sigma}$,
note that
$\sigma(\f{hol}^{\widetilde{\nabla}})\subset\f{hol}^{\wnabla}$, so
that $\left(G/H\right)^{\sigma}=G^{\sigma}/H^{\sigma}$, where the
superindex $\sigma$ stands for the fixed point set by $\sigma$. It
is a well-known result that $\left(G/H\right)^{\sigma}$ is a
closed totally geodesic submanifold of $G/H$. Let now $\theta$ be
the Lie algebra involution of $\f{g}^{\sigma}$ given by
$$\begin{array}{rrcl}
\theta: & \f{g}^{\sigma} & \to & \f{g}^{\sigma}\\
        & C_a & \mapsto & -C_a\\
        & p_1 & \mapsto & p_1\\
        & q_1 & \mapsto & q_1\\
        & X_a & \mapsto & -X_a,
\end{array}$$
which is again an isometry with respect to the bilinear form
induced in $\f{g}^{\sigma}$ by restriction from $\f{m}$. The
subalgebra of fixed points is
$\f{k}=\left(\f{g}^{\sigma}\right)^{\theta}=\mathrm{span}\left\{p_1,q_1\right\}$.
Note that $\f{k}\cap\f{hol}^{\wnabla}=0$. Let
$\widetilde{G^{\sigma}}$ be the universal cover of $G^{\sigma}$
and $\widetilde{H^{\sigma}}$ the corresponding closed subgroup,
$\theta:\f{g}^{\sigma} \to \f{g}^{\sigma}$ induces an isometric
involution
$\theta:\widetilde{G^{\sigma}}/\widetilde{H^{\sigma}}\to\widetilde{G^{\sigma}}/\widetilde{H^{\sigma}}$.
Therefore, let $K$ be the connected Lie subgroup of
$\widetilde{G^{\sigma}}$ with lie algebra $\f{k}$, $K$ is a
totally geodesic submanifold of
$\widetilde{G^{\sigma}}/\widetilde{H^{\sigma}}$. Let $s$ be the
sign of $b(p)$. We define the left-invariant vector fields
$U=1/(\sqrt{|b(p)|})q_1$, $V=U-s\sqrt{|b(p)|}p_1$ in $\f{k}$. We
have $<U,U>=s$, $<V,V>=-s$, $<U,V>=0$, and
$[U,V]=\frac{1}{\sqrt{|b(p)|}}(V-U)$, where
$<\hspace{1mm},\hspace{1mm}>$ stands for the bilinear form
inherited by $\f{k}$ from $\f{g}^{\sigma}$.
It is a straightforward computation to see that $K$ is not
geodesically complete. Hence, since we have the following
inclusions of totally geodesic submanifolds
$$K\subset\widetilde{G^{\sigma}},\qquad G^{\sigma}=\left(G/H\right)^{\sigma}\subset G/H,$$
the manifold $(G/H,g,J)$ is not geodesically complete.
\end{proof}

\begin{corollary}
Let $(M,g,J)$ be a connected and simply-connected $\e$-K\"ahler
manifold admitting a degenerate $\e$-K\"ahler structure of linear
type with $\zeta=-\frac{\e}{2}\xi$, then it is not geodesically
complete.
\end{corollary}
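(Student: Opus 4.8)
The plan is to derive the corollary from Proposition \ref{proposition completeness} by a reconstruction argument, arguing by contradiction. Recall the standing assumption that $M$ is non-flat, so that the constant $R_0$ appearing in the local models is nonzero. Suppose that $(M,g,J)$ were geodesically complete. By hypothesis $M$ is connected, simply connected and carries a degenerate homogeneous $\e$-K\"ahler structure of linear type $S$ with $\zeta=-\frac{\e}{2}\xi$; in particular $\wnabla=\nabla-S$ satisfies the equations \eqref{AS+J}. By Kiri\v{c}enko's Theorem together with the reconstruction theorem for homogeneous structures (\cite{Kir}, \cite{GO}; see also \cite{TV}), a connected, simply connected and complete $\e$-K\"ahler manifold equipped with such an $S$ is a reductive homogeneous $\e$-K\"ahler manifold and is, moreover, globally $\e$-holomorphically isometric to the homogeneous model built from its infinitesimal data.

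Next I would identify that model with the space $(G/H,\bar g,\bar J)$ constructed in the paragraphs preceding Proposition \ref{proposition completeness}. There, for the value $\lambda=-\e/2$, the infinitesimal model $(\f g,\f{hol}^{\wnabla})$ attached to $S$ was computed explicitly and shown to be regular; exponentiating $\f g$ (which, passing to the universal cover, may be taken simply connected) and $\f{hol}^{\wnabla}$ produces a Lie group $G$ and a \emph{closed} subgroup $H$, and $G/H$ inherits the $G$-invariant metric $\bar g$ and $\e$-complex structure $\bar J$ determined by $g$ and $J$ at the base point. Combining this with the previous paragraph, $(M,g,J)$ would be globally $\e$-holomorphically isometric to $(G/H,\bar g,\bar J)$, so geodesic completeness of $M$ would force $(G/H,\bar g)$ to be geodesically complete. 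This contradicts Proposition \ref{proposition completeness}, which exhibits an incomplete geodesic of $G/H$ inside the totally geodesic surface $K$. Hence the assumption fails and $(M,g,J)$ is not geodesically complete.

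The delicate step will be the promotion of the \emph{local} $\e$-holomorphic isometry between $M$ and $G/H$ supplied by the homogeneous-structure machinery to a \emph{global} one: this is precisely where simple-connectedness and the assumed completeness of $M$ enter, together with the regularity of the infinitesimal model established before Proposition \ref{proposition completeness} (which ensures that $H$ is closed and that $G/H$ is a genuine homogeneous manifold rather than merely a germ). It is worth emphasizing that, in contrast with the Riemannian situation, homogeneity alone does not force completeness for these indefinite metrics, so the explicit incomplete geodesic produced in Proposition \ref{proposition completeness} is indispensable to the argument.
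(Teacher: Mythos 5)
Your argument is correct and is essentially the paper's own proof: assume completeness, invoke the Ambrose--Singer/Kiri\v{c}enko reconstruction to obtain a global $\e$-holomorphic isometry between the connected, simply-connected, complete $(M,g,J)$ and the homogeneous model $(G/H,\bar g,\bar J)$ built before Proposition \ref{proposition completeness}, and contradict the incompleteness of that model established there. The extra remarks on regularity of the infinitesimal model and on the local-to-global step are consistent with what the paper already set up and do not change the argument.
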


\begin{proof}
Suppose that $(M,g,J)$ is geodesically complete, then
Ambrose-Singer theorem assures that $(M,g,J)$ is (globally)
$\e$-holomorphically isometric to the homogeneous model
$(G/H,\bar{g},\bar{J})$. But this homogeneous model is not
geodesically complete.
\end{proof}

\subsection{The case $\lambda=0$}

Denoting again $p_1=\xi$ and $p_2=J\xi$, for the sake of
simplicity we choose $p\in M$ be such that with respect to the
basis $\{p_1,p_2,q_1,q_2,X_a,JX_a\}$ and its dual
$\{p^1,p^2,q^1,q^2,X^a,JX^a\}$ the curvature is written
$$R_p=R_0q^1\wedge q^2\otimes(q^1\otimes p_2+\e q^2\otimes p_1).$$
Substituting $\lambda=0$ in \eqref{e-Kahler structure} we obtain
by direct calculation that the non-vanishing terms of
$\widetilde{R}$ are:
$$\begin{array}{rrcl}
\widetilde{R}_{p_1q_2}: & q_1 & \mapsto & -2p_2\\
                        & q_2 & \mapsto & -2\e p_1\\
                        & X_a & \mapsto & 0\\
                        & JX_a & \mapsto & 0\\
                        & p_1,p_2 & \mapsto & 0
\end{array}\hspace{1em}
\begin{array}{rrcl}
\widetilde{R}_{p_2q_1}: & q_1 & \mapsto & 2p_2\\
                        & q_2 & \mapsto & 2\e p_1\\
                        & X_a & \mapsto & 0\\
                        & JX_a & \mapsto & 0\\
                        & p_1,p_2 & \mapsto & 0
\end{array}$$
$$\begin{array}{rrcl}
\widetilde{R}_{q_1q_2}: & q_1 & \mapsto & (R_0-2b(p))p_2\\
                        & q_2 & \mapsto & (R_0-2b(p))\e p_1\\
                        & X_a & \mapsto & 0\\
                        & JX_a & \mapsto & 0\\
                        & p_1,p_2 & \mapsto & 0
\end{array}\hspace{1em}
\begin{array}{rrcl}
\widetilde{R}_{X_aJX_a}: & q_1 & \mapsto & -2p_2\\
                        & q_2 & \mapsto & -2\e p_1\\
                        & X_a & \mapsto & 0\\
                        & JX_a & \mapsto & 0\\
                        & p_1,p_2 & \mapsto & 0,
\end{array}$$
so that $\mathrm{dim}(\f{hol}^{\widetilde{\nabla}})=1$. Choosing
the  endomorphism
$$A=2(q^1\otimes p_2+\e q^2\otimes p_1)$$
as basis of $\f{hol}^{\widetilde{\nabla}}$, the Lie algebra
$\f{g}$ has non-vanishing brackets
\begin{equation*}
\begin{array}{l}
\hspace{1em}\left[A,q_1\right]=2p_2,\hspace{1em}\left[A,q_2\right]=2\e
p_1,\\
\left[p_1,q_1\right]=-p_1,\hspace{1em}\left[p_1,q_2\right]=p_2+A,\\
\left[p_2,q_1\right]=-3p_2-A,\hspace{1em}\left[p_2,q_2\right]=-\e
p_1,\\
\left[q_1,q_2\right]=2b(p)p_2-\frac{1}{2}(R_0-2b(p))A,\\
\left[q_1,X_a\right]=X_a,\hspace{1em}\left[q_1,JX_a\right]=JX_a,\\
\left[q_2,X_a\right]=JX_a,\hspace{1em}\left[q_2,JX_a\right]=\e
X_a,\\
\left[X_a,JX_a\right]=2p_2+A.
\end{array}\end{equation*}
One can check that $\f{g}$ is a solvable Lie algebra with a
$2$-step nilradical
$\f{n}=\mathrm{span}\left\{p_1,p_2,X_a,JX_a,A,a=1,\ldots,n\right\}$.
Since $\f{g}$ has trivial center, the adjoint representation is
faithful and provides a matrix realization of $\f{g}$.
Exponentiating $\f{g}$ and $\f{hol}^{\wnabla}$ we obtain a Lie
group $G$ and a closed Lie subgroup $H$ respectively, so that the
infinitesimal model $(\f{g},\f{hol}^{\wnabla})$ is regular and
$G/H$ is a homogeneous model for $(M,g,J)$. Let $\bar{g}$ and
$\bar{J}$ be the $G$-invariant metric and complex structure on
$G/H$ induced from $(M,g,J)$ on $G/H$.

\begin{proposition}
The homogeneous model $(G/H,\bar{g},\bar{J})$ is not geodesically
complete.
\end{proposition}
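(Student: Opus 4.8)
The plan is to run the argument of Proposition \ref{proposition completeness} essentially unchanged: inside $G/H$ we build a descending chain of \emph{closed} totally geodesic submanifolds, each one the fixed-point set of an isometric involution, ending in a $2$-dimensional Lorentzian Lie group whose incompleteness can be read off directly, and then propagate that incompleteness back up the chain.

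First I would write down the candidate involution $\sigma\colon\f{g}\to\f{g}$ on the basis $\{A,p_1,p_2,q_1,q_2,X_a,JX_a\}$ fixing $p_1,q_1,X_a$ and reversing the sign of $A,p_2,q_2,JX_a$, and check three things against the explicit bracket table above: that $\sigma$ is a Lie algebra automorphism with $\sigma^2=\mathrm{id}$; that $\sigma(\f{hol}^{\wnabla})=\f{hol}^{\wnabla}$ (immediate, as $\f{hol}^{\wnabla}=\mathrm{span}\{A\}$); and that $\sigma|_{\f{m}}$ is an isometry of $\bar g$, which reduces to the $(+1)$- and $(-1)$-eigenspaces $\mathrm{span}\{p_1,q_1,X_a\}$ and $\mathrm{span}\{p_2,q_2,JX_a\}$ being $\bar g$-orthogonal, exactly as in the case $\lambda=-\frac{\e}{2}$. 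Passing to the universal cover if needed so that $\sigma$ integrates to $G$, the fixed set $(G/H)^{\sigma}=G^{\sigma}/H^{\sigma}$ is then closed and totally geodesic with $\f{g}^{\sigma}=\mathrm{span}\{p_1,q_1,X_a:a=1,\dots,n\}$; note that here $\f{hol}^{\wnabla}$ contributes nothing to the fixed algebra, which is the one simplification with respect to the $\lambda=-\frac{\e}{2}$ case, where the $C_a$ survived. If $n\geq1$ I would iterate once more with the involution $\theta$ of $\f{g}^{\sigma}$ fixing $p_1,q_1$ and sending each $X_a\mapsto-X_a$ (again an automorphism and an isometry of the induced form), whose fixed algebra is $\f{k}=\mathrm{span}\{p_1,q_1\}$; since $\f{k}\cap\f{hol}^{\wnabla}=0$, the corresponding connected subgroup $K$ is a closed totally geodesic submanifold of (the cover of) $G^{\sigma}/H^{\sigma}$ carrying the left-invariant Lorentzian metric it inherits from $\bar g$.

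Finally I would analyze $K$. From $[p_1,q_1]=-p_1$ and the inner products $\langle p_1,p_1\rangle=0$, $\langle p_1,q_1\rangle=1$, $\langle q_1,q_1\rangle=b(p)$ (the normalization of the chosen basis), the substitution $U=\frac{1}{\sqrt{|b(p)|}}q_1$, $V=U-s\sqrt{|b(p)|}\,p_1$ with $s=\mathrm{sign}\,b(p)$ produces precisely the data $\langle U,U\rangle=s$, $\langle V,V\rangle=-s$, $\langle U,V\rangle=0$, $[U,V]=\frac{1}{\sqrt{|b(p)|}}(V-U)$, i.e. the very same $2$-dimensional Lorentzian Lie group that occurred in the case $\lambda=-\frac{\e}{2}$ and, for $\e=-1$, already in \cite{CL}; in particular the model is independent of $\e$. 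Its geodesic incompleteness is then the same short integration of the geodesic equations, and through the chain of totally geodesic inclusions $K\subset\widetilde{G^{\sigma}}$, $G^{\sigma}=(G/H)^{\sigma}\subset G/H$ one concludes, exactly as after Proposition \ref{proposition completeness}, that $(G/H,\bar g,\bar J)$ is not geodesically complete (and hence, by Ambrose--Singer, neither is any connected simply connected $(M,g,J)$ carrying a strongly degenerate homogeneous $\e$-K\"ahler structure of linear type).

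The only genuine obstacle I anticipate is the bookkeeping in the second paragraph: confirming that $\sigma$ and $\theta$ are simultaneously Lie algebra automorphisms and $\bar g$-isometries, i.e. getting the signs consistent across the full bracket list and verifying the eigenspace orthogonality for the normalized basis. Once the involutions are in place, everything downstream collapses onto the already-established $2$-dimensional computation, so no new analytic input is required.
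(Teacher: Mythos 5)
Your proposal follows essentially the same route as the paper, which for this case simply says to repeat the argument of the $\lambda=-\frac{\e}{2}$ proposition: you make that explicit by exhibiting the involutions $\sigma$ (fixing $p_1,q_1,X_a$) and $\theta$ (fixing $p_1,q_1$), checking them against the $\lambda=0$ bracket table, and reducing to the same two-dimensional Lorentzian subgroup $K$ with $[p_1,q_1]=-p_1$, whose incompleteness propagates up through the totally geodesic inclusions. The verifications you outline (automorphism property, eigenspace orthogonality, $\sigma(\f{hol}^{\wnabla})=\f{hol}^{\wnabla}$, and the $U,V$ normalization giving $[U,V]=\frac{1}{\sqrt{|b(p)|}}(V-U)$) all check out, so your write-up is a correct and slightly more detailed version of the paper's proof.
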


\begin{proof}
Following the same arguments as in the proof of Proposition
\ref{proposition 2} (see also those in \cite[\S 4]{CL}) we find
isometric involutions $\sigma:\f{g}\to\f{g}$ and
$\f{g}^{\sigma}\to\f{g}^{\sigma}$ so that the connected Lie
subgroup with lie algebra
$\f{k}=\left(\f{g}^{\sigma}\right)^{\theta}$ is not complete.
\end{proof}

\begin{corollary}
Let $(M,g,J)$ be a connected and simply-connected $\e$-K\"ahler
manifold admitting a strongly-degenerate $\e$-K\"ahler structure
of linear type, then it is not geodesically complete.
\end{corollary}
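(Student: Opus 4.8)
The plan is to argue by contradiction, exactly as for the corollary already obtained in the case $\zeta=-\frac{\e}{2}\xi$. Suppose that $(M,g,J)$ is connected, simply connected and geodesically complete. By hypothesis it carries a strongly degenerate homogeneous $\e$-K\"ahler structure of linear type, i.e.\ a $(1,2)$-tensor $S$ of the form \eqref{e-Kahler structure} with $\zeta=\lambda\xi$, $\lambda=0$, satisfying \eqref{AS+J}. Kiri\v{c}enko's theorem (the version recorded in \S 2), together with the construction of the homogeneous model from \cite{TV} reviewed at the start of \S 5, then tells us that completeness upgrades the local picture to a global one: $(M,g,J)$ is globally $\e$-holomorphically isometric to the homogeneous model $(G/H,\bar g,\bar J)$ attached to the infinitesimal model $(\f{g},\f{hol}^{\wnabla})$ computed in the present subsection. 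Here one uses that this infinitesimal model was shown to be regular --- $\f{g}$ has trivial center, so its adjoint representation gives a faithful matrix realization, $G$ may be taken to be the corresponding simply connected matrix group, and $H=\exp\f{hol}^{\wnabla}$ is closed in $G$ --- which is precisely what makes the identification global rather than merely local.

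Granting this, the contradiction is immediate: the preceding Proposition states that $(G/H,\bar g,\bar J)$ is \emph{not} geodesically complete, while an $\e$-holomorphic isometry carries maximal geodesics to maximal geodesics; hence $(M,g,J)$ cannot be geodesically complete, which is the assertion. In particular this reproves, by a uniform argument, the incompleteness result of \cite{CL} in the case $\e=-1$, $\lambda=0$, and establishes it for $\e=1$ as well.

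The proof involves essentially no new computation. The substantive ingredients --- the explicit Lie brackets of $\f{g}$, the regularity of $(\f{g},\f{hol}^{\wnabla})$, and the incompleteness of $G/H$ (obtained by descending through a chain of totally geodesic submanifolds down to a non-complete homogeneous Lorentzian surface, exactly as in the proof of Proposition \ref{proposition completeness}) --- have all been established in the paragraphs leading up to the corollary. Thus the only content is the logical chain \textit{completeness} $\Rightarrow$ \textit{global homogeneous model} $\Rightarrow$ \textit{incompleteness of that model} $\Rightarrow$ \textit{contradiction}, and the ``hard part'' has been absorbed upstream into the verification of regularity and the incompleteness of $G/H$. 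If one preferred to bypass the global Ambrose--Singer identification, an alternative would be to exhibit an inextendible geodesic directly from the local normal form \eqref{local metric 1} of Proposition \ref{proposition local 1} --- for instance the geodesic reaching the curvature singularity $\mathcal{S}$ in finite affine parameter analyzed earlier in the paper --- but verifying genuine inextendibility near $\mathcal{S}$ takes more care, so the homogeneous-model route is the cleaner one.
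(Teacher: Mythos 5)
Your proposal is correct and follows essentially the same route as the paper: argue by contradiction, use completeness together with the Ambrose--Singer/Kiri\v{c}enko theorem to identify $(M,g,J)$ globally and $\e$-holomorphically isometrically with the homogeneous model $(G/H,\bar g,\bar J)$ built from the regular infinitesimal model, and then invoke the preceding proposition asserting that this model is not geodesically complete. Nothing further is needed.
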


\section{The $\e$-quaternion K\"ahler case}

Throughout this section $\mathrm{dim}(M)=4n\geq 8$ is assumed. We
shall study degenerate homogeneous structures of linear type in
the $\e$-quaternion K\"ahler case.

Let $\e=(\e_1,\e_2,\e_3)$, we can combine some definitions of
pseudo-quaternion geometry and para-quaternion geometry in the
following way. For pseudo-quaternion geometry $\e$ must be
substituted by $(-1,-1,-1)$, and for para-quaternion geometry $\e$
must be substituted by $(-1,1,1)$.
\begin{definition}
Let $(M,g)$ be a pseudo-Riemannian manifold.
\begin{enumerate}
\item  An $\e$-quaternion Hermitian structure on $(M,g)$ is a
$3$-rank subbundle $Q\subset \f{so}(TM)$ with a local basis
$J_1,J_2,J_3$ satisfying
$$J_a^2=\e_a,\qquad J_1J_2=J_3.$$

\item \( (M,g) \) is called \( \e \)-quaternion K\"ahler if it is
strongly-oriented and it admits a parallel \( \e \)-quaternion
Hermitian structure with respect to the Levi-Civita connection.
\end{enumerate}
\end{definition}
The first previous definition means that at every point $p\in M$
there is a subalgebra $Q_p\subset\f{so}(T_pM)$ isomorphic to the
imaginary $\e$-quaternions, and in particular $g$ has signature
$(4r,4s)$, $r+s=n$ for $\e=(-1,-1,-1)$ and $(2n,2n)$ for
$\e=(-1,1,1)$. We shall denote by \( Sp^{\e}(n) \) the group \(
\Sp(r,s) \), \( r+s=n \), when \( \e=(-1,-1,-1) \) and \(
\Sp(n,\R) \) when \( \e=(-1,1,1) \). Their Lie algebras are
denoted by \( \sP^{\e}(n) \) respectively. For the proof of the
following Proposition see \cite{AC}.

\begin{proposition}\label{proposition quaternion kahler curvature}
\emph{\cite{AC}} An $\e$-quaternion K\"ahler manifold is Einstein
and has Riemann curvature tensor
$$R=\nu_q R_0+R^{\f{sp}^{\e}(n)},$$
where $\nu_q=\frac{\mathrm{s}}{16n(n+2)}$ is one quarter the
reduced scalar curvature, $R_0$ is four times the curvature of the
$\e$-quaternionic hyperbolic space (of the corresponding
signature)
\begin{align}\label{R_0}
(R_0)_{XYZW} & =
g(X,Z)g(Y,W)-g(Y,Z)g(X,W)-\sum_{a}\e_a\left\{g(J_aX,Z)g(J_aY,W)\right.\nonumber\\
 &  \left.-g(J_aY,Z)g(J_aX,W)+2g(X,J_aY)g(Z,J_aW)\right\},
\end{align}
and $R^{\f{sp}^{\e}(n)}$ is an algebraic curvature tensor of type
$\f{sp}^{\e}(n)$.
\end{proposition}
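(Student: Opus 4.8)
The plan is to follow Alekseevsky's classical argument for quaternion K\"ahler manifolds, carried out over the appropriate indefinite or para-complex real form. First I would fix $p\in M$: since the bundle $Q$ is parallel, the holonomy group of $g$ is contained in the normalizer $Sp^{\e}(n)\cdot Sp^{\e}(1)$, so the holonomy algebra lies in $\sP^{\e}(n)\oplus\sP^{\e}(1)\subset\f{so}(T_pM)$, where $\sP^{\e}(1)$ denotes the Lie algebra of imaginary $\e$-quaternions spanned by $J_1,J_2,J_3$. By the Ambrose--Singer theorem the curvature endomorphisms $R_{XY}$ take values in the holonomy algebra; combined with the pair symmetry $R_{XYZW}=R_{ZWXY}$ and the first Bianchi identity, this says precisely that $R$ is an element of $\mathcal{R}(\sP^{\e}(n)\oplus\sP^{\e}(1))$, the space of algebraic curvature tensors with values in this subalgebra.

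The second step is the representation-theoretic identification of that space. After complexifying, $T_pM\otimes\C\cong H\otimes E$ with $H$ the standard $2$-dimensional module and $E$ the standard $2n$-dimensional module of the complexified factors $\sP(1,\C)$ and $\sP(n,\C)$; the complexification of $\sP^{\e}(n)\oplus\sP^{\e}(1)$ is $S^2E\oplus S^2H$, independently of the real form. A standard computation — decomposing $S^2(S^2E\oplus S^2H)$ and imposing the Bianchi identity, exactly as in the positive-definite case — yields $\mathcal{R}(\sP^{\e}(n)\oplus\sP^{\e}(1))\cong\R\oplus\mathcal{R}(\sP^{\e}(n))$: a one-dimensional summand together with the space of algebraic curvature tensors of hyper-K\"ahler type, i.e.\ those with values in $\sP^{\e}(n)$. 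One must then check that the explicit tensor $R_0$ written in \eqref{R_0} spans that one-dimensional line with the stated normalization (four times the curvature of the $\e$-quaternionic hyperbolic space), which is a direct but sign-sensitive calculation using $J_a^2=\e_a$ and $J_1J_2=J_3$.

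Third, I would read off the Einstein property. Curvature tensors in $\mathcal{R}(\sP^{\e}(n))$ are Ricci-flat — the trace identities for an $\sP^{\e}(n)$-valued curvature tensor are the same as in the definite case — so the Ricci tensor of $R$ equals $\nu_q$ times that of $R_0$, which is a nonzero multiple of $g$; hence $g$ is Einstein, and contracting twice pins down the coefficient as $\nu_q=\mathrm{s}/(16n(n+2))$. Setting $R^{\f{sp}^{\e}(n)}:=R-\nu_q R_0$ then gives the displayed decomposition, with $R^{\f{sp}^{\e}(n)}\in\mathcal{R}(\sP^{\e}(n))$.

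The main obstacle is bookkeeping rather than ideas: one must make sure the complexified decomposition descends faithfully to each real form — in particular that the split para case, where $\sP^{\e}(1)\cong\f{sl}(2,\R)$, does not contribute extra invariant summands — and must track all the constants $\e_a$ through \eqref{R_0} and $\nu_q$ against the chosen conventions. Since this is precisely the structure theory developed in \cite{AC}, in the paper we simply quote their result.
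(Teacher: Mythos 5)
Your sketch is correct and is essentially the argument the paper relies on: the paper itself gives no proof of this proposition but simply cites \cite{AC}, and your outline (holonomy in $Sp^{\e}(n)Sp^{\e}(1)$, decomposition of the space of algebraic curvature tensors into $\R\,R_0\oplus\mathcal{R}(\sP^{\e}(n))$, Ricci-flatness of the $\sP^{\e}(n)$-part giving the Einstein condition and the value of $\nu_q$) is the standard structure theory carried out there. The only point worth making explicit is that the one-dimensionality of the $\sP^{\e}(1)$-contribution requires $n\geq 2$, which is covered by the paper's standing assumption $\dim M=4n\geq 8$.
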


Let $J_1,J_2,J_3$ be a local basis of $Q$, and
$\omega_a=g(\cdot,J_a\cdot)$, $a=1,2,3$. The $4$-form
$$\Omega=\sum_a-\e_a\omega_a\wedge\omega_a$$
is independent of the choice of basis and hence it is globally
defined. An  $\e$-quaternion Hermitian manifold $(M,g,Q$ is
$\e$-quaternion K\"ahler if and only if $\Omega$ is parallel with
respect to the Levi-Civita connection (cf. \cite{AC}), or
equivalently if the holonomy of the Levi-Civita connection is
contained in $Sp^{\e}(n)Sp^{\e}(1)$. This is also equivalent to
\begin{equation}\label{nabla Ja}\nabla
J_a=\sum_{b=1}^3c_{ab}J_b,\qquad a=1,2,3,\end{equation} where
$(c_{ab})$ is a matrix in $\f{sp}^{\e}(1)$.

\begin{definition} An $\e$-quaternion K\"ahler
manifold $(M,g,Q)$ is called a homogeneous $\e$-quaternion
K\"ahler manifold if there is a connected Lie group $G$ of
isometries acting transitively on $M$ and preserving $Q$.
$(M,g,Q)$ is called a reductive homogeneous $\e$-quaternion
K\"ahler manifold if the Lie algebra $\f{g}$ of $G$ can be
decomposed as $\f{g}=\f{h}\oplus\f{m}$ with
$$[\f{h},\f{h}]\subset \f{h},\qquad[\f{h},\f{m}]\subset \f{m}.$$
\end{definition}
As a corollary of Kiri\v{c}enko's Theorem \cite{Kir} we have
\begin{theorem}
A connected, simply-connected and (geodesically) complete
$\e$-quaternion K\"ahler manifold $(M,g,Q)$ is reductive
homogeneous if and only if it admits a linear connection $\wnabla$
satisfying \begin{equation}\label{AS qK}\wnabla g=0,\qquad \wnabla
R=0,\qquad \wnabla S=0,\qquad \wnabla\Omega=0,\end{equation} where
$S=\nabla-\wnabla$, $\nabla$ is the Levi-Civita connection, $R$ is
the curvature tensor of $\nabla$, and $\Omega$ is the canonical
$4$-form associated to $Q$.
\end{theorem}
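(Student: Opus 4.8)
The plan is to deduce this statement from Kiri\v{c}enko's Theorem \cite{Kir} (in the pseudo-Riemannian formulation of \cite{GO}), exactly as the analogous equivalence for $\e$-K\"ahler manifolds was obtained, the only additional ingredient being that the full $\e$-quaternion K\"ahler structure $Q$ is encoded in the single $4$-form $\Omega$, so that controlling $Q$ amounts to controlling $\Omega$.

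For the direct implication, assuming $M=G/H$ is a reductive homogeneous $\e$-quaternion K\"ahler manifold with decomposition $\f{g}=\f{h}\oplus\f{m}$, I would take $\wnabla$ to be the canonical (Ambrose--Singer) connection associated with this decomposition. I would then invoke the standard fact that on a reductive homogeneous space every $G$-invariant tensor field is $\wnabla$-parallel. This yields $\wnabla g=0$ and $\wnabla R=0$ at once; it yields $\wnabla\Omega=0$ because $G$ preserves $Q$ and hence the canonically associated $4$-form $\Omega=\sum_a-\e_a\omega_a\wedge\omega_a$; and it yields $\wnabla S=0$ because $S=\nabla-\wnabla$ is the difference of the two $G$-invariant connections $\nabla$ (Levi-Civita of the $G$-invariant metric) and $\wnabla$, hence a $G$-invariant $(1,2)$-tensor. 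Thus \eqref{AS qK} holds.

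For the converse, starting from a connection $\wnabla$ satisfying \eqref{AS qK}, I would first use the three Ambrose--Singer equations $\wnabla g=\wnabla R=\wnabla S=0$ together with connectedness, simple-connectedness and completeness to apply the pseudo-Riemannian Ambrose--Singer/Kiri\v{c}enko theorem (\cite{Kir}, \cite{GO}): this gives that $(M,g)$ is a reductive homogeneous pseudo-Riemannian space, reconstructed so that $\wnabla$ is precisely its canonical connection; in particular a tensor field is $G$-invariant if and only if it is $\wnabla$-parallel. The remaining equation $\wnabla\Omega=0$ then says $\Omega$ is $G$-invariant, and I would close the argument by showing that an isometry fixing $\Omega$ must preserve the rank-$3$ subbundle $Q$ (the one for which $\Omega=\sum_a-\e_a\omega_a\wedge\omega_a$), so that $G$ preserves $Q$ and $M=G/H$ is a reductive homogeneous $\e$-quaternion K\"ahler manifold.

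The main obstacle is exactly this last point: recovering invariance of $Q$ from invariance of $\Omega$. Its substance is that the stabilizer in the relevant pseudo-orthogonal group of the algebraic type of $\Omega$ is $Sp^\e(n)Sp^\e(1)$, so that preserving $\Omega$ forces preservation of the quaternionic $3$-plane bundle. This is the $\e$-quaternion K\"ahler instance of Kiri\v{c}enko's Theorem, and it goes through here because, by Proposition \ref{proposition quaternion kahler curvature} and the characterization recalled just before it, being $\e$-quaternion K\"ahler is equivalent to parallelism of $\Omega$, equivalently to holonomy contained in $Sp^\e(n)Sp^\e(1)$ --- a characterization insensitive to the signature and to the pseudo/para dichotomy, so no new computation beyond the classical Riemannian case is needed.
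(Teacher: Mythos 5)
Your proposal takes essentially the same route as the paper, which states this result simply as a corollary of Kiri\v{c}enko's Theorem \cite{Kir} (in the pseudo-Riemannian form of \cite{GO}) applied with the invariant structure tensor taken to be the canonical $4$-form $\Omega$, relying on the fact recorded just before the theorem that parallelism/invariance of $\Omega$ is equivalent to preservation of $Q$ (holonomy in $Sp^{\e}(n)Sp^{\e}(1)$). Your more detailed sketch of both implications, including the canonical connection for the direct part and the stabilizer-of-$\Omega$ point for the converse, is a correct elaboration of that same argument.
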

Note that the condition $\wnabla\Omega=0$ is equivalent to
\begin{equation}\label{wnabla Ja}
\wnabla J_a=\sum_{b=1}^3\widetilde{c}_{ab}J_b,\qquad a=1,2,3,
\end{equation}
where $(\widetilde{c}_{ab})\in\f{sp}^{\e}(1)$. A tensor field $S$
satisfying the previous equations is called a \emph{homogeneous
$\e$-quaternion K\"ahler structure}. The classification of such
structures was obtained in \cite{BGO} and \cite{CL2} , resulting
five primitive classes $\QK^{\e}_1,\QK^{\e}_2$,
$\QK^{\e}_3,\QK^{\e}_4,\QK^{\e}_5$. Among them
$\QK^{\e}_1,\QK^{\e}_2,\QK^{\e}_3$ have dimension growing linearly
with respect to the dimension of $M$. Hence
\begin{definition}
A homogeneous $\e$-quaternion K\"ahler structure $S$ is called of
linear type if it belongs to the class
$\QK^{\e}_1+\QK^{\e}_2+\QK^{\e}_3$.
\end{definition}
The local expression of $S\in\QK^{\e}_1+\QK^{\e}_2+\QK^{\e}_3$ is
\begin{multline}\label{struct linear type
e-QT}S_XY=g(X,Y)\xi-g(Y,\xi)X-\sum_{a=1}^3\e_a\left(g(J_aY,\xi)J_aX-g(X,J_aY)J_a\xi\right)\\
+\sum_{a=1}^3g(X,\zeta^a)J_aY,
\end{multline}
where $\xi$ and $\zeta^a$, $a=1,2,3$, are vector fields. We then
give the following further definition.
\begin{definition}
A homogeneous $\e$-quaternion K\"ahler structure of linear type is
called degenerate if $\xi\neq 0$ and $g(\xi,\xi)=0$.
\end{definition}
\begin{remark}
The case $\zeta^a=0$ for $a=1,2,3$ was studied in \cite{CL}
resulting that the manifold $(M,g,Q)$ must be flat.
\end{remark}
\begin{proposition}
Let $(M,g,Q)$ be a $\e$-quaternion K\"ahler manifold admitting a
degenerate homogeneous $\e$-quaternion K\"ahler structure of
linear type. Then $(M,g,Q)$ is flat.
\end{proposition}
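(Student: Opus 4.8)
The strategy mirrors the $\e$-K\"ahler case of Section 3 but must be organised so that the three almost complex structures $J_1,J_2,J_3$ interact correctly. First I would write down the consequences of the Ambrose--Singer equations (\ref{AS qK}): as in Proposition~\ref{proposition 2}, the conditions $\wnabla S=0$ with $\wnabla=\nabla-S$ amount to $\wnabla\xi=0$, $\wnabla\zeta^a=0$ (up to the $\f{sp}^\e(1)$-rotation mixing the $\zeta^a$, which follows from (\ref{wnabla Ja})), and $\wnabla R=0$. Then I would feed $\wnabla R=0$ together with the second Bianchi identity into the curvature, exactly as in the derivation of (\ref{formula sum sym})--(\ref{formula1}): substituting the linear-type expression (\ref{struct linear type e-QT}) into $(\nabla_X R)_{YZWU}=-\sum R_{S_X\,\cdots}$, symmetrising over $X,Y,Z$, and contracting once, one obtains a formula expressing $R_{ZY\xi U}$ in terms of the Ricci tensor and the contractions $r(J_a\xi,\cdot)$. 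A second contraction gives $r(Z,\xi)$ proportional to $g(Z,\xi)$ with factor $\sca/4n$.

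The key simplification available here is Proposition~\ref{proposition quaternion kahler curvature}: an $\e$-quaternion K\"ahler manifold is Einstein, so $r=\frac{\sca}{4n}g$, and $R=\nu_q R_0+R^{\sP^\e(n)}$. I would exploit this twice. First, combining the Einstein condition with the relation $r(Z,\xi)=(\sca/4n)g(Z,\xi)$ gives no new information yet, so instead I would compute $R_{XY}\xi$ directly from $\nabla\xi=S\cdot\xi$ via $R_{XY}Z=\nabla_{[X,Y]}Z-[\nabla_X,\nabla_Y]Z$, obtaining an analogue of (\ref{curv 2}): since $g(\xi,\xi)=0$ the $R_0$-part of $R_{XY}\xi$ has a controlled, essentially ``nilpotent'' form involving only $\xi$ and the $J_a\xi$, together with a $\zeta$-term $\Theta^\zeta_{XY}\xi$. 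Comparing this with the Bianchi-derived formula for $R_{\cdot\,\cdot\,\xi\,\cdot}$ coming from $\nu_q R_0+R^{\sP^\e(n)}$, and testing against well-chosen pairs $X,Y$ (e.g. $Y=J_aX$, and $X,Y$ in the orthogonal complement of $\mathrm{span}\{\xi,J_1\xi,J_2\xi,J_3\xi\}$), I expect to force first $\sca=0$ — hence $\nu_q=0$ and $R=R^{\sP^\e(n)}$ — and then, repeating the argument on $R^{\sP^\e(n)}$, to pin down each $\zeta^a$ as a multiple of $\xi$ and eventually to kill those multiples.

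The decisive step is a dimension count of the type used in Proposition~\ref{proposition Ricci-flat}. Once $\sca=0$ and the $\zeta^a$ are multiples of $\xi$, one derives a recursion $\nabla R = c\,\theta\otimes R + (\text{lower-order }\rho\text{-terms})$ as in (\ref{eq recursiva}); choosing at each point a basis $\{\xi,J_a\xi,q_1,J_aq_1,X_i,J_aX_i\}$ with $g(\xi,q_1)=1$, symmetrising and contracting against the orthonormal frame built from it recomputes the Ricci tensor as a fixed rational multiple (depending only on $n$ and signs $\varepsilon^i$) of itself, and the algebra forces $\dim M < \dim M$ unless that Ricci part vanishes. With Ricci $=0$ and $R=R^{\sP^\e(n)}$, the recursion collapses to $\nabla R = c\,\theta\otimes R$, and then the $\e$-quaternion K\"ahler symmetries (the $Q$-invariance of $R$, i.e. that $\theta\wedge R_{WU}=0$ propagates through all three $J_a$) over-determine $R$: whereas in the K\"ahler case one is left with the one-parameter family $k\,(\theta\wedge\theta\circ J)^{\otimes 2}$, here the simultaneous vanishing $\theta\wedge R_{WU}=(\theta\circ J_a)\wedge R_{WU}=0$ for $a=1,2,3$, combined with $R$ being of type $\sP^\e(n)$ (no $\sP^\e(1)$-component), leaves no nonzero possibility, so $R=0$.

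\textbf{Main obstacle.} The hard part is the bookkeeping in comparing the two expressions for the $\xi$-curvature: the term $\Theta^\zeta_{XY}\xi$ now carries three vector fields $\zeta^1,\zeta^2,\zeta^3$ coupled through the quaternionic identities $J_1J_2=J_3$, so the ``test against special $X,Y$'' step that was a short paragraph in the K\"ahler case becomes a genuinely multi-component linear system; one must choose the probing vectors carefully (exploiting that $\mathrm{span}\{\xi,J_1\xi,J_2\xi,J_3\xi\}$ is $\le 4$-dimensional and possibly degenerate) to peel off each $\zeta^a$ in turn. I expect everything else — the Bianchi contractions, the recursion for $\nabla R$, and the final dimension count — to go through essentially verbatim from Section~3, with $2n$ replaced by $4n$ and single sums over $J$ replaced by sums over $J_1,J_2,J_3$.
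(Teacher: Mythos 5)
Your proposal assembles the right raw ingredients (the Einstein property, the splitting $R=\nu_q R_0+R^{\f{sp}^{\e}(n)}$, a Bianchi contraction of $\wnabla R=0$, the comparison of $R_{XY}\xi$ computed from $\nabla\xi=S\cdot\xi$, and the final algebraic fact that an $\f{sp}^{\e}(n)$-curvature tensor with $\theta\wedge R_{WU}=(\theta\circ J_a)\wedge R_{WU}=0$ vanishes), but the route you sketch has genuine gaps in its logical order. First, your opening contractions are modelled on the $\e$-K\"ahler computation (\ref{formula sum sym})--(\ref{formula1}), which relies on the K\"ahler symmetry that lets one move $J$ through the curvature; the full $\e$-quaternion K\"ahler curvature does not commute with the individual $J_a$ (its $\f{sp}^{\e}(1)$-part rotates them), so the claim that one contraction expresses $R_{ZY\xi U}$ purely in Ricci terms, and the subsequent plan to force $\sca=0$ ``by testing well-chosen pairs $X,Y$'', do not go through as stated: at that stage $R_{XY}\xi=\nu_q(R_0)_{XY}\xi+R^{\f{sp}^{\e}(n)}_{XY}\xi$ and you cannot separate $\nu_q$ from the unknown $\f{sp}^{\e}(n)$-part. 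The paper resolves exactly this by observing that $\nabla R_0=0$ and $S\cdot R_0=0$ (since each $S_X$ acts in $\f{sp}^{\e}(n)+\f{sp}^{\e}(1)$), so $\wnabla R=0$ becomes an equation for $R^{\f{sp}^{\e}(n)}$ alone; since that tensor is traceless and commutes with all $J_a$, the contraction gives $(4n+2)R^{\f{sp}^{\e}(n)}_{YZ\xi U}=0$ with no Ricci terms, whence the wedge identities and $R^{\f{sp}^{\e}(n)}=0$. Only afterwards does one use $\wnabla\xi=0$ to see that $R_{XY}\xi$ lies in $\mathrm{span}\{\xi,J_1\xi,J_2\xi,J_3\xi\}$, which against $\nu_q(R_0)_{XY}\xi$ forces $\nu_q=0$.

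Second, the middle of your plan imports machinery from Section 3 that is either vacuous or unestablished here. Since the manifold is automatically Einstein with constant $\nu_q$, there is no extra Ricci term to kill, so the dimension-count argument of Proposition \ref{proposition Ricci-flat} has no content; likewise, nothing about $\zeta^a$ needs to be (or is, in the paper) determined, and the recursion $\nabla R=c\,\theta\otimes R$ on which you base the wedge identities would require redoing the long computation behind \eqref{eq recursiva} with three complex structures, while also presupposing the Ricci-flatness you were trying to prove --- a near-circular ordering. In the paper the wedge identities come directly from the cyclic (Bianchi) sum of the Ambrose--Singer equation for $R^{\f{sp}^{\e}(n)}$, with no recursion at all. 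So while your final algebraic step coincides with the paper's, the path you propose to reach it would need to be reorganised essentially into the paper's order to become a proof.
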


\begin{proof}
Following Proposition \ref{proposition quaternion kahler
curvature} we decompose the curvature tensor field of $(M,g,Q)$ as
$R=\nu_qR_0+R^{\f{sp}^{\e}(n)}$. Recall that the space of
algebraic curvature tensors $\mathcal{R}^{\f{sp}^{\e}(n)}$ is
$[S^4E]$ with $E=\C^{2n}$ for $\e=(-1,-1,-1)$, and $S^4E$ with
$E=\R^{2n}$ for $\epsilon=(-1,1,1)$. Since $R_0$ is
$Sp^{\epsilon}(n)Sp^{\epsilon}(1)$-invariant and $\nu_q$ is
constant, the covariant derivative $\nabla R_0$ vanishes.
Moreover, for every vector field $X$, $S_X$ acts as an element of
$\f{sp}^{\e}(n)+\f{sp}^{\e}(1)$, whence $S R_0=0$.  Using the
second equation of \eqref{AS qK} and $\wnabla=\nabla-S$ we have
that
$$0=\wnabla R=\nu_q\wnabla R_0+\wnabla R^{\f{sp}^{\e}(n)}=\nabla R^{\f{sp}^{\e}(n)}-S R^{\f{sp}^{\e}(n)}.$$
Writing
$T^*M\otimes(\f{sp}^{\e}(n)+\f{sp}^{\e}(1))=T^*M\otimes\f{sp}^{\e}(n)+T^*M\otimes\f{sp}^{\e}(1)$
we can decompose $S=S_E+S_H$, and hence $S_H
R^{\f{sp}^{\e}(n)}=0$. We thus obtain
\begin{equation*}
\nabla R=\nabla R^{\f{sp}^{\e}(n)}=S_E R^{\f{sp}^{\e}(n)},
\end{equation*}
which we can write as
\begin{equation}\label{formula 2 AS}(\nabla_XR)_{YZWU}=-R^{\f{sp}^{\e}(n)}_{S_XYZWU}-R^{\f{sp}^{\e}(n)}_{YS_XZWU}-R^{\f{sp}^{\e}(n)}_{YZS_XWU}-R^{\f{sp}^{\e}(n)}_{YZWS_XU}.\end{equation}
Taking the cyclic sum in $X,Y,Z$ and applying Bianchi identities
we obtain
\begin{align*}
0 & =
\Cyclic{\fdS}{\SXYZ}\left\{2g(X,\xi)R^{\f{sp}^{\e}(n)}_{YZWU}+g(X,W)R^{\f{sp}^{\e}(n)}_{YZ\xi
U}+g(X,U)R^{\f{sp}^{\e}(n)}_{YZW\xi}\right.\\
 &  \left.+2\sum_a\e_a( g(X,J_aY)R^{\f{sp}^{\e}(n)}_{J_a\xi
 ZWU}+
g(X,J_aW)R^{\f{sp}^{\e}(n)}_{YZJ_a\xi U}+
g(X,J_aU)R^{\f{sp}^{\e}(n)}_{YZWJ_a\xi})\right\}.
\end{align*}
Contracting the previous formula with respect to $X$ and $W$, and
taking into account that $R^{\f{sp}^{\e}(n)}$ is traceless we
obtain
$$(4n+2)R^{\f{sp}^{\e}(n)}_{YZ\xi U}=0,$$
for every vector fields $Z,Y,U$. Expanding the expression of $S$
in \eqref{formula 2 AS} and using the previous formula we arrive
at
$$0=\Cyclic{\fdS}{\SXYZ}\theta(X)R^{\f{sp}^{\e}(n)}_{YZWU},$$
where $\theta=\xi^{\flat}$, or equivalently
\begin{equation}\label{wedge 1}0=\theta\wedge R^{\f{sp}^{\e}(n)}_{WU}.\end{equation}
Noting that $R^{\f{sp}^{\e}(n)}$ satisfies the symmetries
$R^{\f{sp}^{\e}(n)}_{XJ_aYWU}+R^{\f{sp}^{\e}(n)}_{J_aXYWU}=0$,
$a=1,2,3$, we will also have \begin{equation}{\label{wedge
2}}0=(\theta\circ J_a)\wedge R^{\f{sp}^{\e}(n)}_{WU}=0,\qquad
a=1,2,3.\end{equation} It is easy to prove that a curvature tensor
of type $\f{sp}^{\e}(n)$ satisfying equations \eqref{wedge 1} and
\eqref{wedge 2} must vanish. Therefore we conclude that
$R=\nu_qR_0$.

Now, using the third equation of \eqref{AS qK} together with
\eqref{struct linear type e-QT}, and taking into account
\eqref{wnabla Ja} we have
\begin{align*}
0 &=
g(X,Y)\wnabla_Z\xi-g(\wnabla_Z\xi,Y)X-\sum_a\epsilon_a(g(\wnabla_Z\xi,J_aY)J_aX+g(X,J_aY)J_a\wnabla_Z\xi)\\
&
+\sum_ag(X,\wnabla_Z\zeta^a-\sum_b\widetilde{c}_{ba}(Z)\zeta^b)J_aY,
\end{align*}
whence we deduce that $\wnabla \xi=0$. On the other hand
\begin{align*}
R_{XY}\xi & =
-\nabla_X\nabla_Y\xi+\nabla_Y\nabla_X\xi+\nabla_{[X,Y]}\xi\nonumber\\
& =
-g(Y,\nabla_X\xi)\xi-g(Y,\xi)\nabla_X\xi+g(X,\nabla_Y\xi)\xi+g(X,\xi)\nabla_Y\xi\nonumber\\
& -\sum_a\e_a\left(
g(Y,\nabla_XJ_a\xi)J_a\xi+g(Y,J_a\xi)\nabla_XJ_a\xi\right.
\nonumber\\
& \left.-g(X,\nabla_YJ_a\xi)J_a\xi-g(X,J_a\xi)\nabla_YJ_a\xi\right) \nonumber\\
&+\sum_a-g(Y,\nabla_X\zeta^a)J_a\xi-g(Y,\zeta^a)\nabla_XJ_a\xi
\nonumber\\
& +g(X,\nabla_Y\zeta^a)J_a\xi+g(X,\zeta^a)\nabla_YJ_a\xi.
\end{align*}
Therefore, applying $\wnabla\xi=0$ and \eqref{nabla Ja} to this
formula we see that
$R_{XY}\xi\in\mathrm{span}\{\xi,J_1\xi,J_2\xi,J_3\xi\}$. Comparing
this fact with $R_{XY}\xi=\nu_q(R_0)_{XY}\xi$ we obtain that
$\nu_q=0$, so that $R=0$.
\end{proof}

\section{Homogeneous structures of linear type}

The aim of this section is to bring together all results for
homogeneous structures of linear type in the purely
pseudo-Riemannian, $\e$-K\"ahler, and $\e$-quaternion K\"ahler
cases, in order to provide a general picture and a complete study
of this kind of structures. It is worth noting how different the
non-degenerate and the degenerate cases are, being the former
closely related with the well known case of definite metrics and
spaces of constant curvature, while the latter is rather related
to the geometry of singular homogeneous plane waves.

\bigskip

\subsection{The general picture}

For the purely pseudo-Riemannian case, the following theorem was
subsequently proved in \cite{TV,GO,Mon}. We recall that in this
setting a homogeneous pseudo-Riemannian structure is of linear
type if it is of the form $S_XY=g(X,Y)\xi-g(\xi,Y)X$, for some
vector field $\xi$. $S$ is degenerate if $g(\xi,\xi)=0$ and
non-degenerate otherwise.

\begin{theorem}\label{theorem linear pseudo-Riemannian}
Let $(M,g)$ be a pseudo-Riemannian manifold of dimension $m\geq
3$.
\begin{enumerate}
\item If $(M,g)$ admits a non-degenerate homogeneous structure of
linear type, then it has constant sectional curvature
$c=-g(\xi,\xi)$. Conversely, every non-flat simply-connected
pseudo-Riemannian space form locally admits a non-degenerate
homogenous structure of linear type, being this structure globally
defined if and only if $g$ is definite.

\item If $(M,g)$ admits a degenerate homogeneous structure of
linear type, then $(M,g)$ is locally isometric to a singular
scale-invariant homogeneous plane wave. Conversely, every singular
scale-invariant homogeneous plane wave admits a degenerate
homogeneous structure of linear type.
\end{enumerate}
\end{theorem}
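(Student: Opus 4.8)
The statement to be proved is Theorem~\ref{theorem linear pseudo-Riemannian}, collecting the purely pseudo-Riemannian situation; since it is attributed to \cite{TV,GO,Mon}, the plan is to reconstruct the two halves by the same mechanism already used in \S3 for the $\e$-K\"ahler case, specialized to $S_XY=g(X,Y)\xi-g(\xi,Y)X$. For part (1), the non-degenerate case, I would start from Proposition~\ref{proposition 2}'s analogue: such an $S$ is a homogeneous structure iff $\wnabla\xi=0$ and $\wnabla R=0$ with $\wnabla=\nabla-S$. From $\nabla\xi=S\cdot\xi$ one computes $\nabla_X\xi=g(X,\xi)\xi-g(\xi,\xi)X$ (or, after rescaling, the gradient of a suitable function), and feeding this into $R_{XY}\xi=\nabla_{[X,Y]}\xi-[\nabla_X,\nabla_Y]\xi$ yields $R_{XY}\xi=-g(\xi,\xi)(X^\flat\wedge\xi^\flat)(Y)\cdots$, i.e.\ the curvature is determined along $\xi$. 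Then $\wnabla R=0$ rewritten via second Bianchi (exactly as \eqref{nablaR}--\eqref{formula sum sym}, but without the $J$-terms) and contracted forces first $r=\frac{\sca}{m}g$ and then, comparing with the value of $R_{XY}\xi$ just found, that $R=-g(\xi,\xi)R^0$ where $R^0_{XYZW}=g(X,Z)g(Y,W)-g(X,W)g(Y,Z)$; this is exactly constant sectional curvature $c=-g(\xi,\xi)$. The converse is the classical observation that on a non-flat space form $\nabla_X\xi = c\,X$ for $\xi$ the (radial) gradient vector field realizing the warped structure, so $S=\nabla-\wnabla$ with $\wnabla$ the canonical flat connection gives a homogeneous structure of linear type; globality versus mere local existence is the standard fact that this $\xi$ extends to all of $M$ precisely in the Riemannian (definite) case, where the space form is a hyperbolic space $H^m(c)$, $c<0$.

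For part (2), the degenerate case $g(\xi,\xi)=0$, I would run the same computation but now $\nabla_X\xi=g(X,\xi)\xi$, so $d\xi^\flat=0$ and locally $\xi^\flat=dv$; exponentiating as in \S4 produces a parallel null $1$-form and hence Walker-type coordinates $(u,v,x^a)$ in which $g=2\,du\,dv + H(v,x)\,dv^2 + \sum_a (dx^a)^2$. Imposing $\wnabla R=0$ (equivalently the recurrence $\nabla R=c_0\,\xi^\flat\otimes R$ obtained as in \eqref{eq recursiva}) pins down the $v$-dependence of $H$: the profile must satisfy $\partial_v^2$-type scaling with $H$ proportional to $v^{-2}$ times a quadratic form in $x$, which is exactly the definition of a singular scale-invariant homogeneous plane wave. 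The converse direction is a direct verification: on such a plane wave one exhibits the null vector field $\xi=\partial_u$ (suitably normalized) and checks $\wnabla\xi=0$, $\wnabla R=0$ by the same straightforward computation indicated at the end of \S4 for the $\e$-K\"ahler models.

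The step I expect to be the genuine obstacle is \emph{pinning down the profile function} in part (2): showing that $\wnabla R=0$ — not just $\nabla\xi=S\cdot\xi$ — forces the precise scale-invariant form of $H$, and conversely that \emph{every} such plane wave admits the structure, requires carefully tracking the homogeneity weight introduced by $\xi^\flat$ in the recurrence relation and identifying it with the scaling symmetry of the plane-wave metric. This is the analogue of the passage from \eqref{eq recursiva} to Propositions~\ref{proposition local 1} and \ref{proposition local 2}, and it is where signs, contractions, and the null-frame bookkeeping all have to be done correctly. The $J$-free setting makes the algebra lighter than in \S3--\S4, but the logical structure — curvature determined along $\xi$, Ricci computed by contraction, recurrence for $\nabla R$, integration in Walker coordinates — is identical, so I would essentially transcribe those arguments with $J$ removed. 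The remaining ingredients (reductive homogeneity $\Leftrightarrow$ existence of $S$ with $\wnabla g=\wnabla R=\wnabla S=0$; the local-to-global dichotomy for the converse) are quoted verbatim from Kiri\v{c}enko's Theorem and from \cite{TV,GO,Mon}.
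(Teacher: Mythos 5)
First, a point of comparison: the paper does not prove this theorem at all. Theorem \ref{theorem linear pseudo-Riemannian} is stated in \S 7 as a summary of known results and is attributed to \cite{TV}, \cite{GO} and \cite{Mon}; there is no in-paper argument to match. Your plan --- redo \S 3--\S 4 with the $J$-terms stripped out --- is indeed the spirit in which \cite{GO} (non-degenerate case) and \cite{Mon} (degenerate case) proceed, and part (1) of your sketch is essentially sound: $\wnabla\xi=0$ gives $\nabla_X\xi=g(X,\xi)\xi-g(\xi,\xi)X$, $g(\xi,\xi)$ is then automatically constant, and $\wnabla R=0$ plus the Bianchi identities and a contraction force $R=-g(\xi,\xi)R^0$.

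In part (2), however, what you have written is a plan rather than a proof, and the gaps are exactly at the decisive points. (i) You pass from the recurrent null field ($\nabla_X\xi=g(X,\xi)\xi$, hence a local parallel null rescaling) directly to coordinates in which $g=2\,du\,dv+H(v,x)\,dv^2+\sum_a(dx^a)^2$. A parallel null vector field only yields the general Walker normal form, with cross terms $A_a\,dv\,dx^a$ and a $v$-dependent, possibly non-flat wavefront metric $g_{ab}(v,x)$; eliminating these is precisely the curvature/holonomy step (the analogue of \eqref{R_xy}, \eqref{R_q1x} and Proposition \ref{proposition holonomy}) that must be extracted from $\wnabla R=0$ \emph{before} you can write the metric in pp-wave form, so the order of your argument as stated does not work. (ii) You assert that $\wnabla R=0$ is ``equivalently'' the recurrence $\nabla R=c_0\,\xi^{\flat}\otimes R$. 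The raw equation \eqref{nablaR} specialised to $S_XY=g(X,Y)\xi-g(\xi,Y)X$ contains, besides the wanted term, contributions in which $\theta$ hits the other curvature slots and terms of type $R_{\xi\,\cdot\,\cdot\,\cdot}$; reducing it to a pure recurrence requires the Bianchi/contraction analysis, and you cannot import the \S 3 shortcut because there it relied on Ricci-flatness, whereas singular scale-invariant plane waves are in general not Ricci-flat (their profile $B_{ab}/u^2$ has arbitrary trace). (iii) The identification of the profile --- that the metric is a plane wave (quadratic in the transverse coordinates) with exactly the $1/u^2$ scaling --- and the converse that every such plane wave carries the structure are the actual content of \cite{Mon}; in your proposal they are only announced (``pins down the $v$-dependence'', ``direct verification''), and you yourself flag them as the genuine obstacle. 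The same goes for the global-versus-local dichotomy in the converse of part (1), which is quoted as a ``standard fact''. So the strategy is the right one, but the theorem is not yet proved by what you have written; the missing computations are the theorem.
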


For the $\e$-K\"ahler and $\e$-quaternion K\"ahler cases, the
non-degenerate case was obtained in \cite{LS} (see \cite{CGS0,GMM}
for definite metrics), and the degenerate case was partially solve
in \cite{CL} and completed in the present manuscript.

\begin{theorem}\label{theorem linear e-Kahler}
Let $(M,g,J)$ be an $\e$-K\"ahler manifold of dimension $m\geq 4$.
\begin{enumerate}
\item If $(M,g,J)$ admits a non-degenerate homogeneous
$\e$-K\"ahler structure of linear type, then it has constant
$\e$-holomorphic sectional curvature $c=-4g(\xi,\xi)$ and $\zeta
=0$. Conversely, every non-flat simply-connected $\e$-complex
space form locally admits a non-degenerate homogenous
$\e$-K\"ahler structure of linear type, being this structure
globally defined if and only if $g$ is definite.

\item If $(M,g,J)$ admits a degenerate homogeneous $\e$-K\"ahler
structure of linear type, then $\zeta = \lambda \xi$, with
$\lambda \in \{0,-\epsilon /2\}$, and $(M,g)$ is locally
$\e$-holomorphically isometric to $(\C^{\e})^{n+2}$ with the
metric
\begin{align}\label{local metric general theorem}
\bar{g}  &= dw^1dz^1-\e dw^2dz^2+b(dw^1dw^1-\e dw^2dw^2)\nonumber\\
& \hspace{50mm}+\sum_{a=1}^n\varepsilon^a(dx^adx^a-\e dy^ady^a),
\end{align}
where $\varepsilon^a=\pm 1$, and the function $b$ only depends on
the coordinates $\{w^1,w^2\}$ and satisfies
\begin{equation*}\Delta^{\e}b=\frac{R_0}{\|w\|_\lambda ^4} \end{equation*} for
$R_0\in\R-\{0\}$ and $\|w\|_\lambda$ defined in \eqref{norma}.
Conversely, $((\C^{\e})^{n+2},\bar{g})$ admits a degenerate
homogeneous $\e$-K\"ahler structure of linear type.
\end{enumerate}
\end{theorem}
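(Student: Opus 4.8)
The plan is to read off both items from results already in place, handling the non-degenerate and degenerate cases separately. Item (1) is \emph{not} established in the present manuscript: it is precisely the theorem of \cite{LS} (with \cite{CGS0,GMM} supplying the Riemannian and definite-K\"ahler instances of the converse), so for this part the proof reduces to invoking that reference. Concretely, a homogeneous $\e$-K\"ahler structure of linear type is non-degenerate exactly when $g(\xi,\xi)\neq 0$, and \cite{LS} shows that in that case $\zeta=0$ and $(M,g,J)$ has constant $\e$-holomorphic sectional curvature $c=-4g(\xi,\xi)$, while conversely a non-flat simply-connected $\e$-complex space form carries such a structure locally, globally if and only if $g$ is definite. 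No new argument is needed here.

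For the forward direction of item (2), assume $S$ is a degenerate homogeneous $\e$-K\"ahler structure of linear type, i.e.\ $S$ has the form \eqref{e-Kahler structure} with $g(\xi,\xi)=0$. By Proposition \ref{proposition zeta} we have $\zeta=\lambda\xi$ for a constant $\lambda$ and vanishing scalar curvature; Proposition \ref{proposition Ricci-flat} strengthens this to Ricci-flatness, and Corollary \ref{corollary lambda values} then forces $\lambda\in\{0,-\e/2\}$. When $\lambda=-\e/2$, Proposition \ref{proposition local 2} provides, around each point, an $\e$-holomorphic isometry onto an open subset of $(\C^{\e})^{n+2}$ carrying the metric \eqref{local metric general theorem} with $b=b(w^1,w^2)$ and $\Delta^{\e}b=R_0/(w^1)^4$; when $\lambda=0$, Proposition \ref{proposition local 1} gives the same conclusion with $\Delta^{\e}b=R_0/((w^1)^2-\e(w^2)^2)^2$. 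Since, by the curvature formula of Section \ref{section manifolds}, $R_0=0$ would make $g$ flat, the non-flatness hypothesis yields $R_0\in\R-\{0\}$. Finally, unwinding the definition \eqref{norma} of $\|w\|_\lambda$ shows that $(w^1)^4$ and $((w^1)^2-\e(w^2)^2)^2$ are both $\|w\|_\lambda^4$ for the respective values of $\lambda$, so the two subcases collapse into the single statement of the theorem.

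For the converse of item (2), fix $\lambda\in\{0,-\e/2\}$, a constant $R_0\neq 0$, and a solution $b=b(w^1,w^2)$ of $\Delta^{\e}b=R_0/\|w\|_\lambda^4$ on the complement of $\mathcal{S}=\{\|w\|_\lambda=0\}$, and endow $(\C^{\e})^{n+2}\setminus\mathcal{S}$ with the metric \eqref{local metric general theorem} and the standard $\e$-complex structure. Take
\[
\xi=\begin{cases}\dfrac{-1}{(w^1)^2-\e(w^2)^2}\,(w^1\partial_{z^1}+w^2\partial_{z^2}), & \lambda=0,\\[3mm]-\dfrac{1}{w^1}\,\partial_{z^1}, & \lambda=-\e/2,\end{cases}
\]
$\zeta=\lambda\xi$, and let $S$ be given by \eqref{e-Kahler structure}. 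A direct computation — the one carried out in the final Proposition of Section \ref{section manifolds} — verifies $\wnabla\xi=0$, $\wnabla\zeta=0$ and $\wnabla R=0$ for $\wnabla=\nabla-S$, so by Proposition \ref{proposition 2} $S$ is a homogeneous $\e$-K\"ahler structure; it is of linear type by its very form and degenerate because $g(\xi,\xi)=0$. This exhibits the model as realizing the structure.

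Essentially all the work has already been done: the analytic heart of item (2) — deriving the curvature identities from $\wnabla R=0$ and the Bianchi identities, reducing the Ricci tensor, computing the holonomy to obtain the Walker normal form, and integrating $dF=4F\theta$ — lives in Sections 3--5, and here one only assembles those pieces. I do not expect a genuine obstacle; the single point to treat with care in the write-up is that ``$(\C^{\e})^{n+2}$'' in the statement must be read as $(\C^{\e})^{n+2}\setminus\mathcal{S}$, since $b$, hence $\bar g$, blows up on the singular set, and correspondingly that the ``local isometry'' in the forward direction lands in open sets disjoint from $\mathcal{S}$.
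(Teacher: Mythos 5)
Your proposal is correct and follows essentially the same route as the paper, which offers no separate argument for this theorem: item (1) is quoted from \cite{LS}, and item (2) is exactly the assembly of Propositions \ref{proposition zeta}, \ref{proposition Ricci-flat}, Corollary \ref{corollary lambda values}, Propositions \ref{proposition local 1} and \ref{proposition local 2}, together with the realization proposition of Section \ref{section manifolds} for the converse. Your remark that $(\C^{\e})^{n+2}$ must be read away from the singular set $\mathcal{S}=\{\|w\|_{\lambda}=0\}$ is consistent with the paper's own formulation.
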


\begin{theorem}
Let $(M,g,Q)$ be an $\e$-quaternion K\"ahler manifold of dimension
$m\geq 8$.
\begin{enumerate}
\item If $(M,g,Q)$ admits a non-degenerate homogeneous
$\e$-quaternion K\"ahler structure of linear type, then it has
constant $\e$-quaternionic sectional curvature $c=-4g(\xi,\xi)$
and $\zeta ^a =0$. Conversely, every non-flat simply-connected
$\e$-quaternion space form locally admits a non-degenerate
homogenous $\e$-quaternion K\"ahler structure of linear type,
being this structure globally defined if and only if $g$ is
definite.

\item If $(M,g,Q)$ admits a degenerate homogeneous $\e$-quaternion
K\"ahler structure of linear type, then $(M,g,Q)$ is flat.
\end{enumerate}
\end{theorem}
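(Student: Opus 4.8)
Part~(2) requires nothing new: it is precisely the Proposition proved above in this section, which shows that an $\e$-quaternion K\"ahler manifold admitting a degenerate homogeneous $\e$-quaternion K\"ahler structure of linear type is flat. So the plan there is simply to quote that statement. The whole content of the theorem thus lies in part~(1), the non-degenerate case, which is essentially \cite{LS}; I indicate how I would establish it, following the template of Sections~3--6 and of the pseudo-Riemannian case \cite{GO}.

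For the forward implication of part~(1) I would start from the Ambrose--Singer equations \eqref{AS qK} and the explicit form \eqref{struct linear type e-QT} of $S$. Expanding $\wnabla S=0$ together with \eqref{wnabla Ja} gives, by the same computation carried out in the degenerate case (which nowhere uses $g(\xi,\xi)=0$), that $\wnabla\xi=0$ and that $\wnabla\zeta^a$ is governed by the $\f{sp}^{\e}(1)$-connection terms. Hence $\nabla_X\xi=S_X\xi$, and substituting this into $R_{XY}\xi=\nabla_{[X,Y]}\xi-[\nabla_X,\nabla_Y]\xi$ produces an explicit formula for $R_{XY}\xi$ in terms of $g$, the $J_a$, $\xi$ and the $\zeta^a$. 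Next I would invoke Proposition~\ref{proposition quaternion kahler curvature} to decompose $R=\nu_q R_0+R^{\f{sp}^{\e}(n)}$; since $R_0$ is $Sp^{\e}(n)Sp^{\e}(1)$-invariant and $\nu_q$ is constant, $\nabla R_0=0$ and $S R_0=0$, so $\wnabla R=0$ reduces to $\nabla R^{\f{sp}^{\e}(n)}=S_E R^{\f{sp}^{\e}(n)}$. Taking the cyclic sum in $X,Y,Z$, applying the Bianchi identities and contracting twice (using that $R^{\f{sp}^{\e}(n)}$ is traceless) yields, exactly as in Section~6, the relations \eqref{wedge 1} and \eqref{wedge 2} for $\theta=\xi^{\flat}$ and all $\theta\circ J_a$. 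As $\theta$ is now non-null, these force $R^{\f{sp}^{\e}(n)}=0$, so $R=\nu_q R_0$ and $(M,g,Q)$ has constant $\e$-quaternionic sectional curvature. Finally, comparing the two expressions for $R_{XY}\xi$ — the one derived from $S$ and the one $\nu_q(R_0)_{XY}\xi$ coming from $R=\nu_q R_0$ — pins down $\nu_q$ in terms of $g(\xi,\xi)$, i.e. $c=-4g(\xi,\xi)$, and simultaneously makes all $\zeta^a$-terms vanish, so $\zeta^a=0$ (here $\xi\neq0$ since $g(\xi,\xi)\neq0$).

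For the converse in part~(1) I would exhibit the structure directly on a non-flat simply-connected $\e$-quaternion space form, following the construction of \cite{GMM} (done there for definite metrics, the K\"ahler analogue being \cite{CGS0}): take $\xi$ to be a suitable multiple of the gradient of the distance-type function from a fixed point, set $\zeta^a=0$, and verify \eqref{AS qK} by a direct computation using $R=\nu_q R_0$. This $\xi$ is non-null precisely off the light cone of the base point, so a priori the structure is only local; it is global exactly when no such cone meets $M$, which for a space form happens if and only if $g$ is definite — the same dichotomy already recorded in Theorem~\ref{theorem linear pseudo-Riemannian}(1). Since part~(2) is the preceding Proposition and part~(1) is \cite{LS}, the bulk of the work is collation; the step I expect to need the most care is the $\f{sp}^{\e}(n)$-vanishing lemma — that a curvature tensor of type $\f{sp}^{\e}(n)$ annihilated by $\theta\wedge(\cdot)$ and by every $(\theta\circ J_a)\wedge(\cdot)$ must vanish — which one checks by identifying $\mathcal{R}^{\f{sp}^{\e}(n)}$ with $[S^{4}E]$ and reading the wedge conditions off on the associated quartic form. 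Everything else is bookkeeping modeled on Sections~3--6 or a citation to \cite{LS,GMM}.
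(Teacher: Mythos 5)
Your proposal is correct and follows essentially the same route as the paper: part (2) is exactly the Proposition proved in Section 6, and part (1) is delegated to the non-degenerate analysis of \cite{LS} (with \cite{CGS0,GMM} for the definite case), which is precisely how the paper itself establishes the theorem. Your sketch of the non-degenerate argument (decomposition $R=\nu_q R_0+R^{\f{sp}^{\e}(n)}$, the wedge conditions forcing $R^{\f{sp}^{\e}(n)}=0$, and matching the two expressions for $R_{XY}\xi$ to get $c=-4g(\xi,\xi)$ and $\zeta^a=0$) is consistent with that reference's strategy, so no discrepancy to flag.
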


\subsection{Relation with homogeneous plane waves}

We exhibit the parallelism between certain kind of (Lorentzian)
homogeneous plane waves and Lorentz-K\"ahler spaces admitting
degenerate homogeneous structures of linear type (by
Lorentz-K\"ahler we mean pseudo-K\"ahler of index $2$). Although,
as far as the authors know, there is no formal definition of a
 plane wave in complex geometry, this relation could allow us to understand
the latter spaces as a complex generalization of the former, at
least in the important Lorentz-K\"ahler case, suggesting a
starting point for a possible definition of \textit{complex plane
wave}.

A plane wave is a Lorentz manifold $M=\R^{n+2}$ with metric
$$g=dudv+A_{ab}(u)x^ax^bdu^2+\sum_{a=1}^n(dx^a)^2,$$
where $(A_{ab})(u)$ is a symmetric matrix called the profile of
$g$. Moreover, a plane wave is called \textit{homogeneous} if the
Lie algebra of Killing vector fields acts transitively in the
tangent space at every point. Among homogeneous plane waves we
will be interested in two types. A \textit{Cahen-Wallach space} is
defined as a plane wave with profile a constant symmetric matrix
$(B_{ab})$, which makes it symmetric and geodesically complete. On
the other hand, a \textit{singular scale-invariant homogeneous
plane wave} is a plane wave with profile $(B_{ab})/u^2$, where
$(B_{ab})$ is a constant symmetric matrix. Singular
scale-invariant homogeneous plane waves are homogeneous but not
symmetric. In addition they present a singularity in the
cosmological sense at $\{u=0\}$, since the geodesic deviation
equation (or Jacobi equation) governed by the curvature is
singular at this set (see \cite{Sen}). Note that these two kind of
spaces are composed by the twisted product of a totally geodesic
flat wave front and a 2-dimensional manifold containing time and
the direction of propagation. This 2-dimensional space gives the
real geometric information of the total manifold and in particular
it contains a null parallel vector field. They are all VSI and the
curvature information is contained in the profile $A_{ab}(u)$,
since the only non-vanishing component of the curvature is given
by $R_{uaub}=-A_{ab}(u)$. By Theorem \ref{theorem linear
pseudo-Riemannian} singular scale-invariant homogeneous plane
waves are characterized by degenerate homogeneous structures of
linear type. In addition, any indecomposable simply-connected
Lorentzian symmetric space is isometric exactly to one of the
following: $(\R,-dt^2)$, the de Sitter or the anti de Sitter
space, or a Cahen-Wallach space.

In the Lorentz-K\"ahler case, according to \cite{Gal}, there is
only one possibility for a simply-connected, indecomposable (and
not irreducible), symmetric space of complex dimension $2$ and
signature $(2,2)$ with a null parallel complex vector field, that
is a manifold with holonomy
$$\f{hol}^{\gamma_1=0, \gamma_2=0}_{n=0} = \R
\left(\begin{array}{cc}i & i\\ -i & -i\end{array}\right)$$ in the
notation of \cite{Gal}. Note that this is the holonomy algebra in
Proposition \ref{proposition holonomy}. In order to get a plane
wave structure we add a plane wave front by considering a manifold
$(M,g,J)$ with holonomy $\f{hol}^{\gamma_1=0,
\gamma_2=0}_{n=0}\oplus\{0_n\}$. The result (see \cite{CL}) is up
to local holomorphic isometry, the space $\C^{n+2}$ with
\begin{equation}\label{metric galaev}
\bar{g}  = dw^1dz^1+dw^2dz^2+b(dw^1dw^1+dw^2dw^2)
+\sum_{a=1}^n(dx^adx^a+dy^ady^a),
\end{equation}
where the function $b$ only depends on $w^1$ and $w^2$ and
satisfies
$$\Delta b=R_0,\hspace{1em}
R_0\in\bb{R}-\{0\}.$$ This suggests to consider the
pseudo-K\"ahler manifold $(\C^{2+n},\bar{g})$ as a natural
Lorentz-K\"ahler analogue to Cahen-Wallach spaces. As
Cahen-Wallach spaces are simply-connected, in order to have an
actual analogue we only consider non-singular functions $b$, so
that $(\C^{2+n},\bar{g})$ is complete.

On the other hand, since Lorentzian singular scale-invariant
homogeneous plane waves are characterized by degenerate
pseudo-Riemannian homogeneous structures of linear type, from
Theorem \ref{theorem linear e-Kahler} the natural analogues to
these spaces are Lorentz-K\"ahler manifolds with degenerate
homogeneous pseudo-K\"ahler structures of linear type. More
precisely, the spaces $(\C^{n+2}-\{\| w\|_{\lambda}=0\},\bar{g})$
with $\|w\|_\lambda $ as in \eqref{norma}, $\bar{g}$ as in
\eqref{local metric general theorem} with $\e=-1$ and signature
$(2,2+2n)$, and $(\C^{n+2},\bar{g})$ with $\bar{g}$ given in
\eqref{metric galaev}, are composed by the twisted product of a
flat and totally geodesic complex manifold and a $2$-dimensional
complex manifold containing a null parallel complex vector field.
Moreover, the expression of \eqref{local metric general theorem}
and \eqref{metric galaev} are the same except for the function
$b$, which has a different Laplacian in each case. As a straight
forward computation shows, the curvature tensor of both metrics is
$$R=\frac{1}{2}\Delta b(dw^1\wedge dw^2)\otimes(dw^1\wedge
dw^2),$$ whence all the curvature information is contained in the
Laplacian of the function $b$. For this reason, analogously to
Lorentz plane waves, we call $\Delta b$ the profile of the metric.

It is worth noting that in the Lorentz case one goes from
Cahen-Wallach spaces to singular scale-invariant homogeneous plane
waves by making the profile be singular with a term $1/u^2$. Doing
so, the space is no longer geodesically complete and a
cosmological singularity at $\{u=0\}$ is created. In the same way,
in the Lorentz-K\"ahler case one goes from metric \eqref{metric
galaev} to \eqref{local metric general theorem} by making the
profile be singular with a term $1/\| w\|_{\lambda}^4$ and again
one transforms a geodesically complete space to a geodesically
uncomplete space, and a singularity at $\{\| w\|_{\lambda}=0\}$ is
created. This reinforce the parallelism and exhibits a close
relation between this two couples of spaces.

\bigskip

\begin{center}
\begin{tabular}{|c|c|c|}
\hline
\multirow{2}{*}{} & \T Symmetric  & \B Deg. homog. \\
 & space & of linear type\\[0.5ex]
\hline

\multirow{5}{*}{Lorentz} &  \T Cahen-Wallach & \T Singular s.-i.
homog.\\
       & spaces & plane wave \\
       & & \\
                & \footnotesize{Profile: $A(u)=B(const.)$} & \footnotesize{Profile: $A(u)=B/u^2$}\\

                & \footnotesize{Geodesically complete} & \footnotesize{Geodesically
                uncomplete}\\
                                [0.5ex]
\hline
\multirow{5}{*}{Lorentz-K\"ahler} & \T $\C^{2+n}$ with metric  & \T $\C^{2+n}-\{\| w\|_{\lambda}=0\}$\\ & \eqref{metric galaev} & with metric \eqref{local metric general theorem}\\
 & & \\
  & \footnotesize{Profile: $\Delta b=R_0(const.)$} & \footnotesize{Profile: $\Delta b=R_0/\| w\|_{\lambda}^4$}\\
 & \footnotesize{Geodesically complete} & \footnotesize{Geodesically
 uncomplete}\\
  [0.5ex]
\hline
\end{tabular}
\end{center}

Finally, note that a pseudo-quaternion K\"ahler manifold admitting
a degenerate homogeneous pseudo-quaternion K\"ahler structure of
linear type must be flat, suggesting that the notion of
homogeneous plane wave cannot be adapted to pseudo-quaternion
K\"ahler geometry.

\section*{\small{Acknowledgements}}

\footnotesize{The authors are deeply indebted to Prof. Andrew
Swann and Prof. P.M. Gadea for useful conversations about the
topics of this paper.

\noindent This work has been partially funded by MINECO (Spain)
under project MTM2011-22528.}

\end{document}